\def\lV{\left\lVert}
\def\rV{\right\lVert}
\def\lv{\left\lvert}
\def\rv{\right\lvert}
\def\l{\langle}
\def\r{\rangle}
\def\N{\mathcal N}
\def\D{\mathcal D}
\def\I{\mathcal I}
\def\E{\mathcal E}
\def\S{\mathcal S}
\def\E{\mathcal E}
\newtheorem{theorem}{Theorem}
\newtheorem{lemma}{Lemma}
\newtheorem{definition}{Definition}
\newtheorem{corollary}{Corollary}
\newtheorem{proposition}{Proposition}
\begin{document}
\title{Sample-Efficient Sparse Phase Retrieval via Stochastic Alternating Minimization}

\author[a]{Jian-Feng Cai}
\author[b]{Yuling Jiao}
\author[b]{Xiliang Lu}
\author[a]{Juntao You \thanks{Corresponding author: jyouab@connect.ust.hk}}
\affil[a]{Department of Mathematics\\ The Hong Kong University of Science and Technology\\ Clear Water Bay, Kowloon, Hong Kong SAR, China.}
\affil[b]{School of Mathematics and Statistics, and Hubei Key Laboratory of Computational Science\\ Wuhan University\\ Wuhan 430072, China.}
\maketitle
\begin{abstract}
In this work we propose a nonconvex two-stage \underline{s}tochastic \underline{a}lternating \underline{m}inimizing  (SAM) method for sparse phase retrieval. The proposed algorithm is guaranteed to have an exact recovery from $O(s\log n)$ samples if provided the initial guess is in a local neighbour of the ground truth. Thus, the proposed algorithm is two-stage, first we estimate a desired initial guess (e.g. via a spectral method), and then we introduce a randomized alternating minimization strategy for local refinement. Also, the hard-thresholding pursuit algorithm is employed to solve the sparse constraint least square subproblems. We give the theoretical justifications that SAM find the underlying signal exactly in a finite number of iterations (no more than $O(\log m)$ steps) with high probability. Further, numerical experiments illustrates that SAM requires less measurements than state-of-the-art algorithms for sparse phase retrieval problem.
\end{abstract}
\section{Introduction}
The task of phase retrieval problem is to recover the underlying signal from its magnitude-only measurements. For
simplicity, we consider the real-valued problem, which is to find the target vector $\bm{x}^{\natural}\in \mathbb{R}^n$ from the phaseless system
\begin{align} \label{problem:PR}
y_i=| \langle \bm{a}_i,\bm{x}^{\natural} \rangle | , \quad i=1,2,\cdots,m,
\end{align}
where $\{\bm{a}_i\}_{i=1}^{m}\subset \mathbb{R}^n$ are the sensing vectors, $\{y_i\}_{i=1}^{m}\subset\mathbb{R}_+$ are the observed data, and $m$ is the number of measurements (or the sample size). This problem arises in many fields such as X-ray crystallography \cite{harrison1993phase}, optics \cite{walther1963question}, microscopy \cite{miao2008extending}, and others \cite{fienup1982phase}. Due to the fact that it is easier to record the intensity of the light waves than phase when using optical sensors, the phase retrieval problem is of great importance in the related applications. See \cite{shechtman2015phase} for more detailed discussions about the applications of phase retrieval in engineering.

The phase retrieval problem \eqref{problem:PR} is nonlinear and has different possible solutions. In fact, it can at most recover the underlying signal $\bm{x}^{\natural}$ up to  a sign $\pm1$ (or a global phase $c$ satisfying $|c|=1$ in the complex case). To determine a unique solution (in the sense of $\bm{x}^{\natural}\sim-\bm{x}^{\natural}$) for the phase retrieval problem \eqref{problem:PR}, the system should be overcomplete (i.e., $m>n$) when there is no any priori knowledge about the underlying signal $\bm{x}^{\natural}$. Furthermore, it has been shown that $m=2n-1$ measurements is necessary and sufficient for a unique recovery with generic real sampling vectors \cite{balan2006signal}.

In the past decades, a lot of research works have been done to develop practical algorithms for the phase retrieval~\eqref{problem:PR}. It can be traced back to the works of Gerchberg and Saxton \cite{gerchberg1972practical} and Fienup \cite{fienup1982phase} in 1980s. These classical approaches for phase retrieval are mainly based on alternating projections. Though they enjoy good empirical performance and were also widely used \cite{miao1999extending}, they were lack of theoretical guarantees for a long time. On the contrary, recent phase retrieval algorithms, including convex and nonconvex approaches, usually come with theoretical guarantees. Typical convex approaches such as PhaseLift \cite{candes2015phase} and PhaseCut \cite{waldspurger2015phase} linearize the problem by lifting the $n$-dimensional target signal to an $n\times n$ matrix, and thus computationally expensive. Some other convex approaches such as Phasemax \cite{goldstein2016phasemax} and others \cite{hand2018elementary,bahmani2017flexible} do not need to lift the dimension of the signal, but they are not empirically competitive because they depend highly on the so-called anchor vectors that approximate the unknown signal. For nonconvex phase retrieval approaches, the main challenge is how to find a global minimizer and escape from other critical points. To achieve this, some nonconvex algorithms use a carefully designed initial guess that is guaranteed to be close to the global minimizer (the ground truth), and then the estimation is refined to converge to the global minimizer. These approaches often have a provably near optimal sampling complexity $m\sim O(n)$, and they include alternating minimization \cite{netrapalli2013phase}, Wirtinger flow and its variants \cite{candes2015phase1, chen2015solving,zhang2016reshaped,wang2016solving}, Kaczmarz \cite{tan2019phase,wei2015solving}, Riemannian optimization \cite{cai2018solving}, and Gauss-Newton \cite{gao2016gauss,ma2018globally}. Nevertheless, globally convergent first-order methods and greedy methods which contain no designed initialization has been studied in \cite{WaldspurgerPhase,chen2019gradient,tan2019online} recently.  Without a designed initialization, however, the drawback is that more measurements and iterations are normally required. Most recently, the global landscape of nonconvex optimizations are studied in \cite{sun2018geometric,8918236,cai2021solving}, which suggests that there is no spurious local minimum as long as the sample size is sufficiently large; therefore, any algorithm converging to a local minimum finds a global minimum provably.

In many applications, despite the fact that the system \eqref{problem:PR} can be well solved if the measurements are overcomplete, one of the most challenging tasks is to recovery the signal with fewer number of measurements. Also, for the large scale problem, the requirement $m>n$ becomes unpractical due to the huge measurements and computation cost. Therefore, lots of attention has been paid to the case of phase retrieval problem when the underlying signal $\bm{x}^{\natural}$ is structured. One common assumption in signal and image processing is that the target signal $\bm{x}^{\natural}$ is usually sparse or approximately sparse (in a transformed domain) in applications related to signal and image processing.  Thus it is possible to determine a unique solution with much fewer measurements when the target signal $\bm{x}^{\natural}$ is known to be sparse. It then comes to the so-called sparse phase retrieval problem.

To be more specific, the sparse phase retrieval problem is to find a sparse signal $\bm{x}^{\natural}\in \mathbb{R}^n$ from the system
\begin{align}\label{problem:sPRori}
y_i=| \langle \bm{a}_i,\bm{x}^{\natural} \rangle |,  \quad i=1,2,\cdots,m, \qquad \text{subject to} \quad \lVert \bm{x}^{\natural}\lVert_0\le s,
\end{align}
where $s$ is the sparsity level of the underlying signal and usually it satisfies $s\ll n$. It shows that with only $m=2s$ measurements, the solution for the problem \eqref{problem:sPRori} can be uniquely determined with real generic measurements \cite{wang2014phase}. Usually, $s$ is small compared to $n$ in the sparse phase retrieval problem, which makes possible that \eqref{problem:sPRori} requires much fewer measurements than $n$ for a successful recovery. Indeed, practical algorithms such as $\ell_{1}$-regularized PhaseLift method \cite{li2013sparse}, sparse AltMin \cite{netrapalli2013phase}, thresholding/projected Wirtinger flow and its variants \cite{cai2016optimal,soltanolkotabi2019structured}, SPARTA \cite{wang2016sparse}, CoPRAM \cite{jagatap2019sample}, and HTP \cite{CAI2022367}, just name a few, can recover the sparse signal successfully from \eqref{problem:sPRori} with high probability when $m \sim O(s^2\log n)$ Gaussian random sensing vectors are used.

Most practical sparse phase retrieval algorithms are extensions of corresponding approaches for the general phase retrieval problem \eqref{problem:PR} to the sparse setting \eqref{problem:sPRori}. Sparse AltMin \cite{netrapalli2013phase} and CoPRAM\cite{jagatap2019sample} extend the popular alternating minimization \cite{gerchberg1972practical,fienup1982phase} for \eqref{problem:PR} with a sparsity constraint. The sparse AltMin estimates alternatively the phase and the non-zero entries of sparse signal with a pre-computed support, and CoPRAM estimates alternatively the phase and the sparse signal. Some other methods, including SPARTA \cite{wang2016sparse} and thresholding/projected Wirtinger flow \cite{cai2016optimal,soltanolkotabi2019structured}, generalize gradient-type algorithms for \eqref{problem:PR} with an extra sparsifying step to find the sparse signal. Recently hard thresholding pursuit (HTP) \cite{CAI2022367} algorithm for the sparse phase retrieval is proposed, which combines the alternating minimization for \eqref{problem:PR} and the HTP algorithm in compressed sensing \cite{foucart2011hard}.

All the aforementioned sparse phase retrieval algorithms also come with a theoretical guarantee. Typically, those algorithms give a successful sparse signal recovery with high probability using only $O(s\log (n/s))$ Gaussian random measurements as long as the initial guess is in a close neighbour the ground truth. Together with a spectral initialization (see \cite{CAI2022367,jagatap2019sample} for instances), the theoretical sampling complexity of those algorithms is $m\sim O(s^2\log n)$.

\paragraph*{Our contributions.}
In this work, we propose a novel stochastic alternating minimization (SAM) algorithm for sparse phase retrieval. The SAM algorithm merges the ideas from alternating minimization, the HTP algorithm, and the random sampling. Theoretically we show that the proposed SAM algorithm converges to the ground truth in no more than $O(\log m)$ iteration. As a comparison, most existing sparse phase retrieval algorithms are proved linearly convergent only. Due to the random sampling technique, the proposed SAM algorithm achieves the best empirical sampling complexity among all existing sparse phase retrieval algorithms. Moreover the SAM algorithm is also computationally more efficient than existing sparse phase retrieval algorithms, which is confirmed by the experimental results.

\paragraph*{Organization.}
The rest of this paper is organized as follows. The notations and problem setting is given in the remaining part of this section. In Section 2 we propose the algorithm, whose theoretical guarantee is given in Section 3. In Section 4 numerical experiments are presented to show the efficiency of the proposed method. The proofs are given in Section 5.

\paragraph*{Notations.}
 For any vector $\bm{x}\in \mathbb{R}^{n}$ and any matrix $\bm{A}\in \mathbb{R}^{m\times n}$, $\bm{x}^{T}$ and $\bm{A}^{T}$ are their transpose respectively. For any $\bm{x}, \bm{y}\in \mathbb{R}^{n}$, we define
 \begin{equation*}
 \bm{x}\odot \bm{y}:=[x_1y_1,~x_2y_2,~\cdots,~x_ny_n]^T
 \end{equation*}
 to be the entrywise product of $\bm{x}$ and $\bm{y}$. For $\bm{x}\in \mathbb{R}^{n}$, $\mathrm{sgn}{\left(\bm{x}\right)}\in\mathbb{R}^{n}$ is defined by $\left[\mathrm{sgn}{\left(\bm{x}\right)}\right]_i=1$ if $x_i\ge0$, and
 $\left[\mathrm{sgn}{\left(\bm{x}\right)}\right]_i=-1$ otherwise. $\lVert \bm{x}\lVert_0$ is the number of nonzero entries of $\bm{x}\in\mathbb{R}^n$, and $\lVert \bm{x}\lVert_2$ is the standard $\ell_2$-norm, i.e. $\lVert \bm{x}\lVert_2=\left(\sum_{i=i}^{n}x_i^2\right)^{1/2}$. For a matrix $\bm{A}\in \mathbb{R}^{m\times n}$, $\lV\bm{A}\rV_2$ denotes its spectral norm. The notation $[n]$ represents $[n]=\left\{1,2,\cdots,n\right\}$. For an index set $\S$, we use $\lv \S\rv$ to denote the cardinality of $\S$. Also, $\bm{x}_{\S}\in\mathbb{R}^{\lv \S\rv}$ (or $[\bm{x}]_{\S}$ sometimes) stands for the vector obtained by keeping only the components of $\bm{x}\in\mathbb{R}^n$ indexed by $\S$, and $\bm{A}_{\S}$ stands for the submatrix of a matrix $\bm{A}\in\mathbb{R}^{m\times n}$ obtained by keeping only the rows indexed by $\S$. For index set $\I$, $\bm{A}(\I,:)$ denotes the submatrix of $\bm{A}$ which keeps rows of $\bm{A}$ indexed by $\I$.  By $O(\cdot)$, we ignore some positive constant. $\lfloor c\rfloor$ is the integer part of the real number $c$. $\mathbb{N}_+$ is the set of positive natural numbers. For $\bm{x}^{\natural}\in \mathbb{R}^{n}$, $x_{\min}^{\natural}$ and $x_{\max}^{\natural}$ are the smallest and largest nonzero components in magnitude of $ \bm{x}^{\natural}$.
To measure the distance of two signals up to a possible sign flip, we define the distance between $\bm{x}$ and $\bm{y}$ as follows:
\begin{equation}\label{eq:dist}
\mathrm{dist}\left(\bm{x},\bm{y}\right)=\min\left\{\lV\bm{x}-\bm{y}\rV_2,\lV\bm{x}+\bm{y} \rV_2 \right\}.
\end{equation}
Throughout the paper, the sensing matrix and the measurement vector are given by
\begin{align} \label{def:Aandy}
\bm{A} = [\bm{a}_1~\bm{a}_2~ \cdots ~\bm{a}_m]^T \in \mathbb{R} ^{m\times n},\ \bm{y} = [y_1~ y_2~\cdots~ y_m]^T \in \mathbb{R} ^m,
\end{align}
where the sensing matrix $\bm{A}\in \mathbb{R}^{m\times n}$ is i.i.d. Gaussian, i.e., the elements of $\bm{A}$ are independently sampled from the standard normal distribution $\N(0,1)$.

Then the sparse phase retrieval problem \eqref{problem:sPRori} can be rewritten as to find $\bm{x}^{\natural}$ satisfying
 \begin{align}\label{problem:sPRinmatrix}
 \bm{y}=\lvert \bm{A}\bm{x}^{\natural} \lvert, \quad \text{subject to} \ \lVert \bm{x}^{\natural}\lVert_0 \le s,
 \end{align}
where the sparsity level $s$ is assumed to be known or estimated in advance.

\section{Stochastic Alternative Minimization Algorithms}
In this section, we present the stochastic alternative minimization (SAM) algorithm (\cref{alg:SAM}) to solve the sparse phase retrieval problem. The proposed SAM algorithm is a combination of the random-batch sample selection technique, the alternating minimization described in \cref{subsection21}, and the HTP algorithm \cite{foucart2011hard} presented in \cref{subsection22}.


\subsection{Alternating Minimization for Sparse Phase Retrieval}\label{subsection21}

Alternating minimization (or error reduction/alternating projection) algorithms are popular approaches for solving general phase retrieval problem~\eqref{problem:PR} in the applications (e.g., \cite{fienup1982phase,gerchberg1972practical,netrapalli2013phase}). The idea for such algorithms is straightforward --- one simply iterates between the unknown signal and the unknown phases. We rewrite the sparse phase retrieval problem \eqref{problem:sPRinmatrix} as follows:
\begin{align}\label{problem:alm}
\mathop{\mathrm{minimize}}\limits_{ \lV \bm{x}\rV_0\leq s,\; \bm{v}\in\mathbb{V}} \lV \bm{A} \bm{x} - \bm{v}\odot\bm{y}\rV_2^2,
\end{align}
where $\bm{v}$ denotes the phase and $\mathbb{V}=\{-1,1\}^m$ is the space of all possible phases. Then the objective function in \eqref{problem:alm} is minimized alternatively between $\bm{v}$ and $\bm{x}$ in their corresponding constrained sets --- $\bm{v}$ in the phase space $\mathbb{V}$ and $\bm{x}$ in the sparse vector space $\{\bm{x}\in\mathbb{R}^n:\lV \bm{x}\rV_0\leq  s\}$. More precisely, given $\bm{x}_{k}$,
we solve \eqref{problem:alm} by setting $\bm{x}=\bm{x}_{k}$, which gives the phase
\begin{equation}\label{eq:vupdate}
\bm{v}_{k+1}=\mathrm{sgn}(\bm{A}\bm{x}_{k}).
\end{equation}
Then, we fix $\bm{v}=\bm{v}_{k+1}$ and solve \eqref{problem:alm} to obtain $\bm{x}_{k+1}$, i.e.,
\begin{align}\label{subproblem:ori}
\bm{x}_{k+1}=\mathop{\mathrm{arg~min}}\limits_{ \lV \bm{x}\rV_0\leq  s} \lV \bm{A} \bm{x} - \bm{y}_{k+1}\rV_2^2,
\end{align}
where $\bm{y}_{k+1}=\bm{v}_{k+1}\odot\bm{y}$.
The algorithm is summarized in \cref{alg:alm}. With a good initial guess and a proper solver for the subproblem~\eqref{problem:alm}, we can show that \cref{alg:alm} converges at least linearly to the solution, which is given in the following \cref{prop:convergeAlm}.

\begin{proposition}\label{prop:convergeAlm}
Assume that the sensing matrix $\bm{A}\in \mathbb{R}^{m\times n}$ is i.i.d. Gaussian. Let $\{\bm{x}_k\}_{k\geq 0}$ be generated by \cref{alg:alm} with the initial guess $\bm{x}_{0}$ and the subproblem in Step 6 (i.e. \eqref{subproblem:ori}) solved exactly. There exist universal constants $C,C'\geq 0$ and $\zeta\in(0,1)$ such that: whenever $m\ge C s\log (n/s)$ and the initial guess satisfies $\mathrm{dist}\left(\bm{x}_{0},\bm{x}^{\natural}\right)\le \frac{1}{8}\lV \bm{x}^{\natural}\rV_2$, with probability at least $1-e^{-C'm}$, we have
$$
\mathrm{dist}\left(\bm{x}_{k+1},\bm{x}^{\natural}\right)\leq \zeta\cdot\mathrm{dist}\left(\bm{x}_{k},\bm{x}^{\natural}\right),
\quad\forall~k\geq0,
$$
i.e., \cref{alg:alm} converges at least linearly to the solution set.
\end{proposition}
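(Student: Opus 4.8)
The plan is to track the distance $\mathrm{dist}(\bm{x}_{k+1},\bm{x}^{\natural})$ in terms of $\mathrm{dist}(\bm{x}_k,\bm{x}^{\natural})$ by splitting the error into a ``phase error'' part and a ``linear least-squares'' part. First I would fix the sign ambiguity: without loss of generality assume $\lV \bm{x}_k - \bm{x}^{\natural}\rV_2 = \mathrm{dist}(\bm{x}_k,\bm{x}^{\natural})$, so the nearer representative is $\bm{x}^{\natural}$ itself. Since the subproblem \eqref{subproblem:ori} is solved exactly, $\bm{x}_{k+1}$ is the best $s$-sparse approximation (in the $\bm{A}$-weighted sense) to the vector whose image under $\bm{A}$ is $\bm{y}_{k+1}=\bm{v}_{k+1}\odot\bm{y}$. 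Writing $\bm{y} = |\bm{A}\bm{x}^{\natural}| = \mathrm{sgn}(\bm{A}\bm{x}^{\natural})\odot(\bm{A}\bm{x}^{\natural})$, we get
\begin{equation*}
\bm{y}_{k+1} = \bm{v}_{k+1}\odot\bm{y} = \bm{A}\bm{x}^{\natural} + \bm{h}_{k+1}, \qquad \bm{h}_{k+1} := \bigl(\bm{v}_{k+1} - \mathrm{sgn}(\bm{A}\bm{x}^{\natural})\bigr)\odot \bm{y}.
\end{equation*}
The key observation is that $\bm{h}_{k+1}$ is supported on the index set $\I_k := \{i : \mathrm{sgn}(\langle\bm{a}_i,\bm{x}_k\rangle)\neq\mathrm{sgn}(\langle\bm{a}_i,\bm{x}^{\natural}\rangle)\}$ of sign mismatches, and on that set $|[\bm{h}_{k+1}]_i| = 2|\langle\bm{a}_i,\bm{x}^{\natural}\rangle|$ while $|\langle\bm{a}_i,\bm{x}^{\natural}\rangle| \le |\langle\bm{a}_i,\bm{x}_k - \bm{x}^{\natural}\rangle|$ (a sign mismatch forces $\bm{a}_i$ to ``cross'' between $\bm{x}_k$ and $\bm{x}^{\natural}$, so the smaller of the two inner products is bounded by the inner product with the difference). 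Hence $\lV\bm{h}_{k+1}\rV_2^2 \le 4\lV\bm{A}_{\I_k}(\bm{x}_k - \bm{x}^{\natural})\rV_2^2$.

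Next I would invoke a restricted isometry property (RIP) for $2s$-sparse vectors, valid with probability $1 - e^{-C'm}$ once $m \ge Cs\log(n/s)$ for i.i.d.\ Gaussian $\bm{A}$: for all $\bm{z}$ with $\lV\bm{z}\rV_0 \le 2s$,
\begin{equation*}
(1-\delta)\lV\bm{z}\rV_2^2 \le \tfrac1m\lV\bm{A}\bm{z}\rV_2^2 \le (1+\delta)\lV\bm{z}\rV_2^2,
\end{equation*}
with $\delta$ a small fixed constant. From optimality of $\bm{x}_{k+1}$ applied against the competitor $\bm{x}^{\natural}$, expand $\lV\bm{A}\bm{x}_{k+1} - \bm{y}_{k+1}\rV_2^2 \le \lV\bm{A}\bm{x}^{\natural} - \bm{y}_{k+1}\rV_2^2 = \lV\bm{h}_{k+1}\rV_2^2$, then rearrange to bound $\lV\bm{A}(\bm{x}_{k+1}-\bm{x}^{\natural})\rV_2^2$ by a constant times $\lV\bm{h}_{k+1}\rV_2^2$ plus a cross term $|\langle \bm{A}(\bm{x}_{k+1}-\bm{x}^{\natural}),\bm{h}_{k+1}\rangle|$; since $\bm{x}_{k+1}-\bm{x}^{\natural}$ is $2s$-sparse, the RIP and Cauchy--Schwarz turn this into $\lV\bm{x}_{k+1}-\bm{x}^{\natural}\rV_2 \lesssim \lV\bm{h}_{k+1}\rV_2/\sqrt m$. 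Combining with the previous paragraph, $\mathrm{dist}(\bm{x}_{k+1},\bm{x}^{\natural}) \lesssim \lV\bm{A}_{\I_k}(\bm{x}_k-\bm{x}^{\natural})\rV_2/\sqrt m$.

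Finally I would control the ``local'' term $\tfrac1m\lV\bm{A}_{\I_k}(\bm{x}_k-\bm{x}^{\natural})\rV_2^2$. This is the heart of the argument and where I expect the main obstacle. The set $\I_k$ is small when $\bm{x}_k$ is close to $\bm{x}^{\natural}$ — heuristically $|\I_k|/m \approx \frac1\pi \arccos\bigl(\langle\bm{x}_k,\bm{x}^{\natural}\rangle/(\lV\bm{x}_k\rV\lV\bm{x}^{\natural}\rV)\bigr) \lesssim \mathrm{dist}(\bm{x}_k,\bm{x}^{\natural})/\lV\bm{x}^{\natural}\rV$ — so one wants a bound like $\tfrac1m\lV\bm{A}_{\I_k}\bm{u}\rV_2^2 \le \rho\,\lV\bm{u}\rV_2^2$ with $\rho < 1$ uniformly over $s$-sparse $\bm{u}$ in a neighborhood. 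The difficulty is that $\I_k$ depends on $\bm{x}_k$, hence on $\bm{A}$, so one cannot condition naively; the standard fix is a net/uniform argument over all directions $\bm{u}$ and all ``mismatch regions'' (pairs of $s$-sparse unit vectors within distance $\frac18$), combined with concentration of $\tfrac1m\sum_{i\in\I}\langle\bm{a}_i,\bm{u}\rangle^2$ around its Gaussian expectation, which is small because the event $i\in\I_k$ localizes $\bm{a}_i$ to a thin cone. This yields a uniform contraction factor $\zeta\in(0,1)$ — possibly after shrinking the initial radius or enlarging $C$ — and closes the induction. The remaining points are bookkeeping: the event is a single high-probability event (the RIP plus the uniform local bound), so it holds simultaneously for all $k$, giving the stated conclusion for every $k\ge 0$.
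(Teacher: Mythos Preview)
Your overall skeleton matches the paper's proof: fix the sign, write $\bm{y}_{k+1}=\bm{A}\bm{x}^{\natural}+\bm{h}_{k+1}$, use optimality of $\bm{x}_{k+1}$ against the competitor $\bm{x}^{\natural}$ together with the $2s$-RIP to get $\lV\bm{x}_{k+1}-\bm{x}^{\natural}\rV_2\le \tfrac{2}{\sqrt{(1-\delta)m}}\lV\bm{h}_{k+1}\rV_2$, and then control $\lV\bm{h}_{k+1}\rV_2$ by a uniform (over $s$-sparse vectors in the $\tfrac18$-neighbourhood) high-probability bound. The paper does exactly this, with the triangle inequality in place of your square-expansion (they give the same factor $2$), and invoking both events once so that the contraction holds for all $k$.

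The one substantive difference is how $\lV\bm{h}_{k+1}\rV_2$ is controlled. You pass through the inequality $|\langle\bm{a}_i,\bm{x}^{\natural}\rangle|\le|\langle\bm{a}_i,\bm{x}_k-\bm{x}^{\natural}\rangle|$ on the mismatch set and then seek a bound on $\tfrac1m\lV\bm{A}_{\I_k}(\bm{x}_k-\bm{x}^{\natural})\rV_2^2$. The paper instead bounds the original quantity $\tfrac1m\sum_{i\in\I_k}|\langle\bm{a}_i,\bm{x}^{\natural}\rangle|^2$ directly, importing a ready-made uniform estimate (Soltanolkotabi's Lemma~25, stated here as \cref{bound:Ax}) that gives $\tfrac1m\lV\bm{h}_{k+1}\rV_2^2\le C_\lambda^2\lV\bm{x}_k-\bm{x}^{\natural}\rV_2^2$ with $C_{1/8}\approx 0.295$, whence $\zeta=\tfrac{2C_{1/8}}{\sqrt{1-\delta}}\approx 0.6$. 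Your detour is valid but lossy: on $\I_k$ the value $|\langle\bm{a}_i,\bm{x}^{\natural}\rangle|$ is typically \emph{much} smaller than $|\langle\bm{a}_i,\bm{x}_k-\bm{x}^{\natural}\rangle|$ (the mismatch forces $\langle\bm{a}_i,\bm{x}^{\natural}\rangle$ near zero, not $\langle\bm{a}_i,\bm{x}_k-\bm{x}^{\natural}\rangle$), and a back-of-the-envelope Gaussian computation shows your quantity is about $3\times$ the paper's in expectation. Correspondingly, note that ``$\rho<1$'' is not what you need: the contraction factor is $4\sqrt{\rho}/\sqrt{1-\delta}$, so you actually need $\rho<(1-\delta)/16\approx 0.056$, and at radius $\tfrac18$ the expected value of your $\rho$ is already $\approx 2\lambda/\pi\approx 0.08$ before adding concentration slack. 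So your route would force you to shrink the radius below the stated $\tfrac18$ (which you anticipate), whereas the paper's direct bound keeps the radius as stated. Other than this constant-tracking issue, your plan is sound and the ``hard'' step you flag is precisely the lemma the paper imports.
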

\begin{proof}
The proof of the proposition is given in \cref{subsec:propconvergeAlm}.
\end{proof}

\begin{algorithm}[hbt!]
   \caption{Alternating minimization for sparse phase retrieval}\label{alg:alm}
   \begin{algorithmic}[1]
     \STATE Input: Sensing matrix $\bm{A}\in \mathbb{R}^{m\times n}$, observed data $\bm{y}$, sparsity level $s$, some small stopping criteria parameter $\epsilon$, and maximum iterations $T$.
     \STATE Initialization: Let the initial value $\bm{x}_{0}$ be generated by a given method, e.g., \cref{alg:initialization}.
     \STATE $k=0$
    \REPEAT
    \STATE Compute $\bm{y}_{k+1}= \mathrm{sgn}(\bm{A}\bm{x}_{k})\odot \bm{y}$.
     \STATE Compute $\bm{x}_{k+1}$ by solving
     \begin{align*}
    \bm{x}_{k+1}=\mathop{\mathrm{arg~min}}\limits_{ \lV \bm{x}\rV_0\leq  s}\lV \bm{A} \bm{x} - \bm{y}_{k+1}\rV_2^2.
     \end{align*}
     \STATE $k=k+1$.
     \UNTIL{$\lV\bm{x}_{k+1}-\bm{x}_{k}\rV_2 /\lV\bm{x}_{k}\rV_2\le \epsilon$ or $k\geq T$}.
     \STATE Output $\bm{x}_{k}$.
   \end{algorithmic}
\end{algorithm}


\subsection{Fixed Step HTP for the CS Subproblem}\label{subsection22}

\cref{prop:convergeAlm} presents the local convergence of \cref{alg:alm} if the subproblem \eqref{subproblem:ori} is exactly solved. In practice, we need an inner solver for the subproblem \eqref{subproblem:ori}. We will consider the hard thresholding pursuit (HTP) algorithm to the subproblem \eqref{subproblem:ori} in this subsection. It is worth mentioning that the similar strategy has been used by the CoPRAM introduced in \cite{jagatap2019sample}, where the solver for subproblem \eqref{subproblem:ori} is CoSAMP \cite{NeedellCoSaMP} in compressed sensing.

The subproblem \eqref{subproblem:ori} can be viewed as a compressed sensing (CS) problem. To see this, we  consider a local region near the ground truth, and we notice that  $$\bm{y}_{k+1}=\mathrm{sgn}(\bm{A}\bm{x}_{k})\odot |\bm{A}\bm{x}_{k}|=\bm{A}\bm{x}^{\natural}+\bm{e}_k$$
where $\bm{e}_k=\mathrm{sgn}(\bm{A}\bm{x}_{k})\odot |\bm{A}\bm{x}_{k}|-\bm{A}\bm{x}^{\natural}$, and $\bm{e}_k$ is small as long as $\bm{x}_{k}$ is sufficiently close to $\pm\bm{x}^{\natural}$ in the Gaussian model (see \cref{bound:Ax-y} in \cref{subsec:lemmas}). Therefore, the subproblem \eqref{subproblem:ori} is a typical sparse constrained least squares problem.

Since the sensing matrix $\frac{1}{\sqrt{m}}\bm{A}$ satisfies the restricted isometry property (RIP) with high probability (see \cref{ARIP}), there are a lot of efficient solver to solve \eqref{subproblem:ori}, such as IHT \cite{blumensath2008iterative}, CoSaMP \cite{NeedellCoSaMP}, HTP\cite{foucart2011hard}, PDASC \cite{fan2014primal}, and many others. For completeness, we give the definition of RIP as follows.

\begin{definition}[{\cite{candes2005decoding}}]\label{def:RIP}
A matrix $\bm{C}\in\mathbb{R}^{m\times n}$ satisfies the restricted isometry property (RIP) of order $r$ if there exists $\delta\in [0,1)$ such that
\begin{equation}\label{eq:RIP}
\left(1-\delta\right)\lV\bm{x}\rV_2^2\le\lV \bm{C}\bm{x}\rV_2^2\le \left(1+\delta\right)\lV\bm{x}\rV_2^2, \qquad\forall~\bm{x}~:~\lV\bm{x}\rV_0\le r.
\end{equation}
The $r$-RIP constant (RIP constant of order $r$) $\delta_s$ is defined to be the smallest $\delta$ such that \eqref{eq:RIP} holds.
\end{definition}


In this work, we consider the hard thresholding pursuit (HTP) algorithm with a fixed number of iterations $L$ (e.g., $L=1$) for the subproblem. To make the paper self-contained, we give the HTP algorithm for solving the subproblem~\eqref{subproblem:ori} in \cref{alg:htp}.
Line 6 in \cref{alg:htp} is equivalent to solving the normal equation
\begin{equation*}
\bm{A}^{T}_{S_{k,\ell}}\bm{A}_{S_{k,\ell}}[\bm{x}_{k,\ell}]_{S_{k,\ell}}=\bm{A}^{T}_{S_{k,\ell}}\bm{y}_{k+1},
\end{equation*}
which can be solved in just $O(s^2 m)$ operations provided $s\lesssim m$. With a fixed number of iterations, the whole computational cost for \cref{alg:htp} is $O(s^2m + mn)$.

Consider $\bm{y}_{k+1}=\bm{A}\bm{x}^{\natural}+\bm{e}_k$. According to \cite[Theorem 5]{bouchot2016hard}, for $\lV\bm{e}_k\rV_2=0$, if $\frac{1}{\sqrt{m}}\bm{A}$ satisfies RIP with constant $\delta_{3s}\leq\frac13$ , then HTP requires no more than $2s$ iterations to give an exact solution of \eqref{subproblem:ori}. For $\lV\bm{e}_k\rV_2$ sufficiently small, \cite[Theorem 6]{bouchot2016hard} further ensures that HTP gives a solution proportional to $\lV\bm{e}_k\rV_2$ in at most $3s$ iterations. Therefore, if HTP is stopped in $O(s)$ iterations, the total computational cost of HTP solving subproblem~\eqref{subproblem:ori} is no more than $O(smn)$ in case of $s\lesssim \sqrt{n}$.

\begin{algorithm}[hbt!]
   \caption{Hard Thresholding Pursuit (HTP) for solving \eqref{subproblem:ori}}\label{alg:htp}
   \begin{algorithmic}[1]
   \STATE Input: Sensing matrix $\bm{A}\in \mathbb{R}^{m\times n}$, data $\bm{y}_{k+1}$, sparsity level $s$, maximum allowed inner iterations $L$.
     \STATE Initialization: $\ell=0$, $\bm{x}_{k,0}=\bm{0}$ (or $\bm{x}_{k,0}=\bm{x}_{k}$).
     \FOR {$\ell=1,2,...,L$}
     \STATE $S_{k,\ell}\leftarrow$ \{indices of $s$ largest entries in magnitude of $\bm{x}_{k,\ell-1}+\frac{1}{m} \bm{A}^T( \bm{y}_{k+1}-\bm{A} \bm{x}_{k,\ell-1})$\}.
     \STATE Solve $\bm{x}_{k,\ell}\leftarrow \mathop{\mathrm{arg~min}}\limits_{\mathrm{supp}(\bm{x})\subset S_{k,\ell}}\lVert\bm{A} \bm{x}-\bm{y}_{k+1}\lVert_{2}$.
     \ENDFOR
     \STATE  Output $\bm{x}_{k+1}=\bm{x}_{k,L}$.
     \end{algorithmic}
\end{algorithm}

\subsection{Stochastic Alternating Minimization for Sparse Phase Retrieval}
The proposed stochastic alternating minimization (SAM) algorithm is a stochastic version of a combination of \cref{alg:alm} and \cref{alg:htp}. Stochasticity is also an important ingredient in many algorithms for the standard phase retrieval \eqref{problem:PR}, such as SGD \cite{tan2019online}, Kaczmarz \cite{tan2019phase,wei2015solving}, and STAF \cite{wang2017scalable}. Besides, it has been shown that gradient-based algorithms enjoy better performance when some measurements are ruled out \cite{wang2016solving,wang2017scalable,wang2016sparse,wang2017solving}. Here we adopt stochasticity for solving the sparse phase retrieval to obtain our SAM algorithm.

In each iteration of our proposed SAM algorithm, we first choose samples from the measurement mddel \eqref{problem:sPRori} in a random batch manner, and then we apply one step of alternating minimization algorithm (\cref{alg:alm}) to this random measurement and \cref{alg:htp} is applied to solve the CS subproblem (Line 6 in \cref{alg:alm}).

To describe the full algorithm, we first give the following definition of the Bernoulli sampling model.
\begin{definition}\label{def:samplingmodel}
Let the index set $\I$ be a subset of $[m]$. We say $\I$ is Bernoulli sampled from $[m]$ with a probability parameter $\beta\in (0,1]$ if each entry $i$ is sampled (or kept)  with probability $\beta$ independent of all others.
\end{definition}

Practically, the parameter $\beta$ is almost identical to its empirical version $\lvert\I \rvert /m$. Then we explain one step iteration of the proposed SAM algorithm. Let $\bm{x}_k$ be the estimation at the $k$-th iteration. To start the $(k+1)$-st iteration, we first randomly draw a subset $\I_{k+1}$ of $[m]$ using the Bernoulli sampling model according to \cref{def:samplingmodel}. By keeping the rows of $\bm{A}$ indexed by $\I_{k+1}$, we obtain a submatrix $\bm{A}_{k+1}:=\bm{A}(\I_{k+1},:)$ of $\bm{A}$ at the $(k+1)$-st iteration. We then apply one step iteration of the alternating minimization method to the problem
\begin{equation}\label{eq:}
\mathop{\mathrm{minimize}}\limits_{ \lV \bm{x}\rV_0\leq s,\; \bm{v}\in\mathbb{V}} \lV \bm{A}_{k+1} \bm{x} - \bm{v}\odot\bm{y}_{\I_{k+1}}\rV_2^2.
\end{equation}
This leads to an algorithm where $\bm{A}$ and $\bm{y}$ in \cref{alg:alm} are replaced by $\bm{A}_{k+1}$ and $\bm{y}_{\I_{k+1}}$ respectively. In particular, similar to \eqref{eq:vupdate} and \eqref{subproblem:ori}, we update
$$\bm{y}_{k+1}=\mathrm{sgn}(\bm{A}_{k+1} \bm{x}_{k})\odot \bm{y}_{\I_{k+1}},$$
and
\begin{align*}
\bm{x}_{k+1}\leftarrow \mathop{\mathrm{arg~min}}\limits_{ \lV \bm{x}\rV_0\leq s} \lV \bm{A}_{k+1} \bm{x} - \bm{y}_{k+1}\rV_2^2,
\end{align*}
where the latter is again solved by $L$ steps of HTP algorithm (\cref{alg:htp}). The full algorithm is summarized in \cref{alg:SAM}.


\begin{algorithm}[hbt!]
   \caption{Stochastic alternating minimization (SAM) for sparse phase retrieval}\label{alg:SAM}
   \begin{algorithmic}[1]
     \STATE Input: the sensing matrix $\bm{A}\in \mathbb{R}^{m\times n}$, the observed data $\bm{y}$, the sparsity level $s$, a fixed constant $\beta\in (0,1]$, a small stopping criteria parameter $\epsilon$, and the maximum number $K$ and $L$ of iterations allowed.
     \STATE Initialization: Let the initial value $\bm{x}_{0}$ be generated by a given method, e.g., \cref{alg:initialization}.
     \STATE $k=0$
     \REPEAT
    \STATE Draw a set $\I_{k+1}$ randomly from $\{1,2,\cdots,m\}$ by Bernoulli sampling model with probability $\beta$ (according to \cref{def:samplingmodel}). Let $\bm{A}_{k+1} = \bm{A}(\I_{k+1},:)$, and $\bm{y}_{\I_{k+1}}:=\bm{y}\left(\I_{k+1}\right)$.
     \STATE Compute
     $$
     \bm{y}_{k+1}= \mathrm{sgn}(\bm{A}_{k+1}\bm{x}_{k})\odot \bm{y}_{\I_{k+1}}.
     $$
     \STATE Obtain $\bm{x}_{k+1}$ by solving
     \begin{align}\label{subproblem:sto}
     \mathop{\mathrm{minimize}}\limits_{ \lV \bm{x}\rV_0\leq  s} \frac{1}{2\beta m}\lV \bm{A}_{k+1} \bm{x} - \bm{y}_{k+1}\rV_2^2
     \end{align}
     via $L$ steps of HTP as in the following: starting from $\bm{x}_{k,0}=\bm{x}_{k}$ (or $\bm{x}_{k,0}=\bm{0}$)
     \begin{itemize}
     \item[] {\bf for} $\ell=1,2,...,L$
     \item[] \qquad\qquad$S_{k,\ell}\leftarrow$ indices of $s$ largest entries in magnitude of $$\bm{x}_{k,\ell-1}+\frac{1}{\beta m}\bm{A}_{k+1}^T( \bm{y}_{k+1}-\bm{A}_{k+1} \bm{x}_{k,\ell-1}).$$
     \item[] \qquad\qquad $\bm{x}_{k,\ell}\leftarrow \mathop{\mathrm{arg~min}}\limits_{\mathrm{supp}(\bm{x})\subset S_{k,\ell}}\lVert\bm{A}_{k+1} \bm{x}-\bm{y}_{k+1}\lVert_{2}$
     \item[] {\bf end for}
     \item[] Set $\bm{x}_{k+1}=\bm{x}_{k,L}$.
     \end{itemize}
     \STATE $k=k+1$
     \UNTIL{$\lV\bm{x}_{k+1}-\bm{x}_{k}\rV_2 /\lV\bm{x}_{k}\rV_2\le \epsilon$ or $k\geq K$.}
     \STATE Output $\bm{x}_{k+1}$.
   \end{algorithmic}
\end{algorithm}

\section{Theory}\label{section:theorems}
In this section, we present some theoretical results of the proposed SAM algorithm (\cref{alg:SAM}). The proofs of these results are postponed to \cref{section:proofs}. Our results show that, despite of the nonconvexity, \cref{alg:SAM} is guaranteed to converge to the underlying sparse signal $\bm{x}^{\sharp}$ in at most $O(\log m)$ steps. As a comparison, most existing nonconvex sparse phase retrieval algorithms (except for HTP\cite{CAI2022367}) are usually guaranteed to have a linear convergence only. Our previous work HTP \cite{CAI2022367} is guaranteed to have finite step convergence, but the number of iterations depends on the dynamics of the underlying signal.

The proof strategy is the same as many popular nonconvex (sparse) phase retrieval solvers. We first show that \cref{alg:SAM} converges to $\bm{x}^{\sharp}$ if the initial guess $\bm{x}_{0}$ is sufficiently close to $\bm{x}^{\sharp}$. Then, we initialize \cref{alg:SAM} by a spectral initialization, which gives $\bm{x}_{0}$ in the basin of local convergence of the algorithm.

\subsection{Local Convergence}\label{subsection:local}
In this subsection, we present local convergence results of \cref{alg:SAM}, under the assumption that the sensing matrix $\bm{A}$ is a standard Gaussian random matrix.

Our result show that, in a $O(\|\bm{x}^{\natural}\|_2)$-neighbourhood of $\pm \bm{x}^{\natural}$, \cref{alg:SAM} with $m=O(s\log(n/s))$ finds the exact solution $\pm \bm{x}^{\natural}$ with high probability. If the subproblem~\eqref{subproblem:sto} is solved exactly by HTP (i.e., $L=2s$), \cref{alg:SAM} terminates in finite number (no more than $O(\log m)$) of iterations. If the subproblem~\eqref{subproblem:sto} is only solved approximately after a fixed number of iterations of HTP (e.g., $L=1$), \cref{alg:SAM} converges linearly. The result is summarized in the following theorem, whose proof is given in \cref{subsec:localconvergence}.

\begin{theorem}[Local Convergence of SAM]\label{localconvergence}
Let $\bm{x}^{\natural}\in\mathbb{R}^{n}$ be satisfying $\|\bm{x}^{\natural}\|_0\leq s$, and $\bm{A}\in\mathbb{R}^{m\times n}$ be a random matrix whose entries are i.i.d. standard normal random variables. Let $\{\bm{x}_{k}\}_{k\ge 0}$ be generated by \cref{alg:SAM} with $\bm{y}:=\lv \bm{A}\bm{x}^{\natural}\rv$, $\beta\in [\frac{1}{10},1]$, $L\geq 1$, and an initial guess $\bm{x}_{0}$ satisfying
$$
\mathrm{dist}\left(\bm{x}_{0},\bm{x}^{\natural}\right)\le \frac{\sqrt{\beta}}{8}\lV \bm{x}^{\natural}\rV_2 .
$$
There exists universal constants $C_1, C_2, C_3>0$ and $\alpha_0\in(0,1)$ such that: whenever $m\ge C_2 \beta^{-2}s\log(n/s)$,
\begin{enumerate}
\item[(a)] with probability at least $1-2Ke^{-C_3 \beta^2m}$,
\begin{align*}
\mathrm{dist}\left(\bm{x}_{k+1},\bm{x}^{\natural}\right)\le \alpha_0\cdot \mathrm{dist}\left(\bm{x}_{k},\bm{x}^{\natural}\right),~\forall~k=0,1,2,\cdots,K-1.
\end{align*}
\item[(b)] if $L\ge 2s$, then with probability at least $1-2Ke^{-C_3 \beta^2m}-m^{-1}$,
\begin{align*}
\mathrm{dist}\left(\bm{x}_{k+1},\bm{x}^{\natural}\right)
\begin{cases}
\le \alpha_0\cdot \mathrm{dist}\left(\bm{x}_{k},\bm{x}^{\natural}\right), &\forall~k=0,1,2,\cdots,K-1,\cr
=0, &\forall~k\ge K-1.
\end{cases}
\end{align*}
for some $K\leq C_1\log m$.
\end{enumerate}
\end{theorem}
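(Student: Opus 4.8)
The plan is to prove a \emph{uniform} one‑step contraction — valid for every $s$‑sparse iterate in the prescribed neighbourhood of $\pm\bm{x}^{\natural}$ and for a freshly drawn Bernoulli batch — then chain it along the iterations to get (a), and finally add a ``last‑mile'' argument upgrading geometric decay to exact recovery in $O(\log m)$ steps to get (b). Using the \emph{uniform} form of the one‑step bound is precisely what lets us substitute the data‑dependent iterate $\bm{x}_k$ without delicate conditioning on past randomness. For the one‑step analysis, fix $k$; by the symmetry $\bm{x}^{\natural}\leftrightarrow-\bm{x}^{\natural}$ we may assume the sign realizing $\mathrm{dist}$ is $+$. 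For any $s$‑sparse $\bm{x}$ with $\mathrm{dist}(\bm{x},\bm{x}^{\natural})=\|\bm{x}-\bm{x}^{\natural}\|_2\le\tfrac{\sqrt{\beta}}{8}\|\bm{x}^{\natural}\|_2$, the vector fed to HTP at this step is $\bm{y}_{k+1}=\bm{A}_{k+1}\bm{x}^{\natural}+\bm{e}$ with $\bm{e}=\mathrm{sgn}(\bm{A}_{k+1}\bm{x})\odot|\bm{A}_{k+1}\bm{x}^{\natural}|-\bm{A}_{k+1}\bm{x}^{\natural}$ supported on the sign‑mismatch rows, where $|e_i|=2|\langle\bm{a}_i,\bm{x}^{\natural}\rangle|$. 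I will use two facts, each holding with probability $\ge 1-e^{-c\beta^2 m}$ over the joint law of $\bm{A}$ and $\I_{k+1}$ (note $|\I_{k+1}|\ge\beta m/2$ by a Chernoff bound, and conditioned on its size $\bm{A}_{k+1}$ is an i.i.d.\ Gaussian matrix): \emph{(i)} $\tfrac{1}{\sqrt{\beta m}}\bm{A}_{k+1}$ satisfies RIP of order $3s$ with $\delta_{3s}\le\tfrac13$ — standard for Gaussian matrices once $\beta m\gtrsim s\log(n/s)$, guaranteed by $m\ge C_2\beta^{-2}s\log(n/s)$ and $\beta\le1$, cf.\ \cref{ARIP}; \emph{(ii)} a uniform phase‑error bound $\|\bm{e}\|_2\le\rho_0\sqrt{\beta m}\,\mathrm{dist}(\bm{x},\bm{x}^{\natural})$, valid simultaneously over all $s$‑sparse $\bm{x}$ in the neighbourhood, with $\rho_0$ a universal constant that can be made as small as we wish by shrinking the radius (the value $\tfrac{\sqrt{\beta}}{8}$ suffices). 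Fact (ii) is \cref{bound:Ax-y} of \cref{subsec:lemmas}; its proof combines the elementary geometric fact that a sign mismatch at row $i$ forces $|\langle\bm{a}_i,\bm{x}-\bm{x}^{\natural}\rangle|\ge|\langle\bm{a}_i,\bm{x}^{\natural}\rangle|$ with a concentration/covering argument over the $O(s)$‑dimensional cone of admissible differences (this is where the $\beta^{-2}$ in the sample size and the $\beta^{2}$ in the exponent originate).

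\emph{From HTP back to a contraction; proof of (a).} On the event above, apply the perturbation theory of HTP \cite{foucart2011hard,bouchot2016hard} to the CS subproblem $\min_{\|\bm{z}\|_0\le s}\|\bm{A}_{k+1}\bm{z}-\bm{y}_{k+1}\|_2$: warm‑started at the $s$‑sparse vector $\bm{x}$ (we take $\bm{x}_{k,0}=\bm{x}_k$; either start works when $L\ge 2s$), one HTP step obeys $\|\bm{z}^{(\ell)}-\bm{x}^{\natural}\|_2\le\rho_1\|\bm{z}^{(\ell-1)}-\bm{x}^{\natural}\|_2+\tau\|\bm{e}\|_2/\sqrt{\beta m}$ with $\rho_1\le\tfrac12$ (since $\delta_{3s}\le\tfrac13$) and $\tau$ universal, so after $L\ge1$ steps, together with (ii),
\[
\mathrm{dist}(\bm{x}_{k+1},\bm{x}^{\natural})\le\|\bm{x}_{k,L}-\bm{x}^{\natural}\|_2\le\Big(\rho_1^{L}+\tfrac{\sqrt2\,\tau\rho_0}{1-\rho_1}\Big)\mathrm{dist}(\bm{x},\bm{x}^{\natural})=:\alpha_0\,\mathrm{dist}(\bm{x},\bm{x}^{\natural}),
\]
and choosing $\rho_0$ small gives $\alpha_0\in(0,1)$; this is essentially the argument behind \cref{prop:convergeAlm} with $\bm{A}$ replaced by the batch matrix and the exact solve by $L$ HTP steps. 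Now take $\bm{x}=\bm{x}_k$: by induction $\bm{x}_k$ is $s$‑sparse and $\mathrm{dist}(\bm{x}_k,\bm{x}^{\natural})\le\alpha_0^{k}\,\mathrm{dist}(\bm{x}_0,\bm{x}^{\natural})\le\tfrac{\sqrt{\beta}}{8}\|\bm{x}^{\natural}\|_2$, so the iterate stays in the neighbourhood and the one‑step estimate applies at every step; a union bound over $k=0,\dots,K-1$ (two events per step) costs $2Ke^{-C_3\beta^2 m}$, which gives (a).

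\emph{Exact recovery; proof of (b).} Assume now $L\ge2s$, so that HTP under the RIP of (i) returns the $s$‑sparse solution exactly in at most $2s$ iterations whenever the data are noiseless, i.e.\ whenever $\bm{e}=\bm{0}$; it thus suffices to show $\bm{e}=\bm{0}$ eventually. A mismatch at row $i$ needs $|\langle\bm{a}_i,\bm{x}_k-\bm{x}^{\natural}\rangle|\ge|\langle\bm{a}_i,\bm{x}^{\natural}\rangle|$, so introduce two events, each of probability $\ge1-\tfrac12 m^{-1}$: \emph{(A)} Gaussian anti‑concentration plus a union bound give $\min_{i\in[m]}|\langle\bm{a}_i,\bm{x}^{\natural}\rangle|\ge c_0\|\bm{x}^{\natural}\|_2/m^{2}$; \emph{(B)} since every difference $\bm{x}_k-\bm{x}^{\natural}$ is $2s$‑sparse, a net over $2s$‑sparse directions with a sub‑Gaussian tail and a union over the $m$ rows give $|\langle\bm{a}_i,\bm{x}_k-\bm{x}^{\natural}\rangle|\le C\sqrt{s\log(n/s)+\log m}\,\mathrm{dist}(\bm{x}_k,\bm{x}^{\natural})$ for all $i\in[m]$ and all $k\le K$. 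On $(A)\cap(B)$ the mismatch set is empty — hence $\bm{e}=\bm{0}$ and $\bm{x}_{k+1}=\bm{x}^{\natural}$ — as soon as $\mathrm{dist}(\bm{x}_k,\bm{x}^{\natural})<c_0\|\bm{x}^{\natural}\|_2/\big(m^{2}\sqrt{s\log(n/s)+\log m}\big)$. By the contraction of (a) this threshold is crossed at $k=K-1$ provided $(K-1)\log(1/\alpha_0)\ge 2\log m+\tfrac12\log\!\big(s\log(n/s)+\log m\big)+O(1)$, and since $m\gtrsim s\log(n/s)$ the right side is $O(\log m)$, so some $K\le C_1\log m$ works. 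Finally, once $\bm{x}_K=\bm{x}^{\natural}$ the phase error at every later step is identically zero, so $\bm{x}_j=\bm{x}^{\natural}$ for all $j\ge K$; adding the loss $m^{-1}$ from $(A)\cap(B)$ to that of (a) yields the stated probability.

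\emph{Main obstacle.} The crux is fact (ii): the uniform phase‑error bound over the $s$‑sparse neighbourhood of $\pm\bm{x}^{\natural}$, holding simultaneously with the Bernoulli sub‑sampling. This is what makes the data‑dependent $\bm{x}_k$ admissible with no conditioning gymnastics, and it forces one to reconcile sparse‑vector chaining, the geometry of sign flips, and Bernoulli thinning within a single estimate. The $O(\log m)$ bookkeeping in (b) — in particular exploiting $2s$‑sparsity of $\bm{x}_k-\bm{x}^{\natural}$ rather than the trivial $\|\bm{a}_i\|_2$ bound, so as not to lose a $\sqrt{n}$ factor — is a close second.
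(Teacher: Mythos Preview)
Your plan for part~(a) is essentially the paper's: a one-step contraction from simultaneous RIP of the Bernoulli-subsampled matrices (\cref{sRIP}), the uniform phase-error bound (\cref{bound:Ax-y}, built on \cref{bound:Ax}), and HTP robustness, followed by induction and a union bound over $k$. Two small corrections. First, the $\beta^{-2}$ in the sample size and $\beta^{2}$ in the exponent come from \cref{sRIP}, not from the phase-error bound: the latter is proved for the \emph{full} matrix $\bm{A}$ and then trivially restricted to $\I_{k+1}$, so $\beta$ never enters there. Second, your specific constants do not close for $L=1$: with $\delta_{3s}=\tfrac13$ one has $\rho_1=\tfrac12$ and $\tau=2\sqrt3\approx 3.46$, while the radius $\sqrt\beta/8$ gives $\rho_0=C_\lambda/\sqrt\beta\approx 0.3$, so $\alpha_0=\rho_1+\tau\rho_0\approx 1.5>1$ (the $\sqrt2$ and the $(1-\rho_1)^{-1}$ in your displayed formula are spurious for $L=1$). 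The paper takes $\delta=0.1$, getting $\rho_1\approx 0.14$, $\tau\approx 2.66$, and $\alpha_0\le 0.95$; this only enlarges $C_2$ and leaves the argument intact.

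For part~(b) you take a genuinely different route. The paper never introduces your event~(B). Instead it bounds the \emph{aggregate} inner product $|\langle\bm{y}_{k+1}-\bm{A}_{k+1}\bm{x}^{\natural},\,\bm{A}_{k+1}\bm{x}^{\natural}\rangle|$ two ways: from below by $2|\D_k|\,y_{\min}^2$ (with $\D_k=\I_{k+1}\cap\{i:\mathrm{sgn}(\bm{a}_i^T\bm{x}_k)\neq\mathrm{sgn}(\bm{a}_i^T\bm{x}^{\natural})\}$), and from above by $\|\bm{y}_{k+1}-\bm{A}_{k+1}\bm{x}^{\natural}\|_2\cdot\|\bm{A}_{k+1}\bm{x}^{\natural}\|_2$, which recycles the phase-error bound and RIP already on the table from~(a). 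Anti-concentration on $y_{\min}$ (your event~(A)) then forces $|\D_k|<1$, hence $=0$, once $k\gtrsim\log m$, after which HTP with $L\ge 2s$ recovers $\bm{x}^{\natural}$ exactly. Your row-by-row comparison via the new uniform bound~(B) is also valid and yields the same $K=O(\log m)$; the paper's route is more economical (no additional covering argument needed), while yours is geometrically more direct and does not rely on the specific form of the phase-error lemma.
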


From \cref{localconvergence}, we see that, with a good initialization, the computational cost of \cref{alg:SAM} with $L=2s$ is $O(smn\log m)$ for an exact recovery. The lower bound of $\beta\in[\frac{1}{10},1]$ in \cref{localconvergence} is chosen arbitrarily, we can change it to any number smaller in $(0,1)$. In this case, one should change all the constants accordingly.  In our experiments, we simply choose $\beta\ge 0.4$ for the purpose of performance comparison of the algorithms.

We consider an interesting special case of \cref{alg:SAM} and \cref{localconvergence} where $\beta=1$ and $L= 2s$. Since the Bernoulli probability $\beta=1$, there is no stochasticity. Therefore, in this special case, \cref{alg:SAM} with $\beta=1$ is an implementation of \cref{alg:alm}, where the subproblem in Line 6 is solved by HTP. As a by-product, our \cref{localconvergence} gives the following \cref{converge:alm}, which states that the alternating minimization for sparse phase retrieval with a CS subproblem solver enjoys a finite step convergence. This result improves the corresponding result in, e.g., \cite{jagatap2019sample}, where only linear convergence of \cref{alg:alm}. The proof is presented in \cref{subsec:proofalm}.

\begin{corollary}\label{converge:alm}
Let $\bm{x}^{\natural}\in\mathbb{R}^{n}$ be satisfying $\|\bm{x}^{\natural}\|_0\leq s$, and $\bm{A}\in\mathbb{R}^{m\times n}$ be a random matrix whose entries are i.i.d. standard normal random variables. Let the iteration sequence $\{\bm{x}_{k}\}_{k\ge 0}$ generated by \cref{alg:alm} with $\bm{y}:=\lv \bm{A}\bm{x}^{\natural}\rv$, \eqref{subproblem:ori} solved via $L=2s$ iterations of HTP, and the initial guess $\bm{x}_0$ satisfying
$$
\mathrm{dist}\left(\bm{x}_{0},\bm{x}^{\natural}\right)\le \frac{1}{8}\lV \bm{x}^{\natural}\rV_2.
$$
Then there exists universal constants $C_4, C_5, C_6>0$ and $\alpha_1\in(0,1)$ such that: if $m\ge C_5 s\log(n/s)$, then with probability at least $1-e^{-C_6 m}-m^{-1}$, we have
\begin{equation*}
\mathrm{dist}\left(\bm{x}_{k+1},\bm{x}^{\natural}\right) \left\{
\begin{aligned}
&\le \alpha_1 \cdot \mathrm{dist}\left(\bm{x}_{k},\bm{x}^{\natural}\right), \quad &\forall~ k&=0,1,\cdots,T-1, \\
&=0,   &\forall~k&\ge T-1.
\end{aligned}
\right.
\end{equation*}
for some $T\leq C_4\log m$.
\end{corollary}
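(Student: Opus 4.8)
The plan is to obtain \cref{converge:alm} as the specialization of \cref{localconvergence} to the non‑stochastic regime $\beta=1$ with exact subproblem solves $L=2s$. First I would observe that when $\beta=1$ the Bernoulli sampling model of \cref{def:samplingmodel} keeps every index with probability one, so $\I_{k+1}=[m]$ almost surely, hence $\bm{A}_{k+1}=\bm{A}$ and $\bm{y}_{\I_{k+1}}=\bm{y}$ for every $k$. Consequently the update of $\bm{y}_{k+1}$ in \cref{alg:SAM} coincides with \eqref{eq:vupdate}, the $\tfrac{1}{2\beta m}$ normalization in \eqref{subproblem:sto} reduces to $\tfrac{1}{2m}$ and does not alter the minimizer of the $\ell_0$‑constrained least squares, and the inner HTP loop in \cref{alg:SAM} is precisely \cref{alg:htp} run on the data $(\bm{A},\bm{y}_{k+1})$. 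Thus, with $\beta=1$, the iterates $\{\bm{x}_k\}$ of \cref{alg:SAM} coincide with those of \cref{alg:alm} whose subproblem \eqref{subproblem:ori} is solved by $L$ steps of HTP, and the hypothesis $\mathrm{dist}(\bm{x}_0,\bm{x}^{\natural})\le\tfrac18\|\bm{x}^{\natural}\|_2$ of the corollary is exactly the hypothesis $\mathrm{dist}(\bm{x}_0,\bm{x}^{\natural})\le\tfrac{\sqrt\beta}{8}\|\bm{x}^{\natural}\|_2$ of \cref{localconvergence} at $\beta=1\in[\tfrac1{10},1]$.

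Next I would invoke \cref{localconvergence}(b) with $\beta=1$ and $L=2s$. It provides universal constants $C_1,C_2,C_3>0$ and $\alpha_0\in(0,1)$, a stopping index $K\le C_1\log m$, and, whenever $m\ge C_2 s\log(n/s)$, the guarantee that with probability at least $1-2Ke^{-C_3 m}-m^{-1}$ one has $\mathrm{dist}(\bm{x}_{k+1},\bm{x}^{\natural})\le\alpha_0\,\mathrm{dist}(\bm{x}_{k},\bm{x}^{\natural})$ for $k=0,\dots,K-1$ and $\mathrm{dist}(\bm{x}_{k+1},\bm{x}^{\natural})=0$ for all $k\ge K-1$. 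Setting $C_5:=C_2$, $C_4:=C_1$, $\alpha_1:=\alpha_0$, and $T:=K$ reproduces the asserted dichotomy verbatim; only the success probability needs a cosmetic adjustment.

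To match the probability $1-e^{-C_6 m}-m^{-1}$ claimed in \cref{converge:alm}, I would absorb the polynomial factor $2K\le 2C_1\log m$ into the exponential: since $\log m$ grows slower than any exponential, for any $C_6\in(0,C_3)$ there is an $m_0$ with $2C_1(\log m)e^{-C_3 m}\le e^{-C_6 m}$ for all $m\ge m_0$, and enlarging $C_5$ if necessary forces $m\ge C_5 s\log(n/s)\ge m_0$; hence $1-2Ke^{-C_3 m}-m^{-1}\ge 1-e^{-C_6 m}-m^{-1}$, which is the stated bound.

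The main point to get right --- rather than a genuine obstacle --- is the reduction itself: one must verify that the $\beta=1$ limit of \cref{alg:SAM} is literally \cref{alg:alm} with an HTP inner solver (the deterministic Bernoulli draw, and the harmless $1/(\beta m)$ scalings in both the least‑squares objective and the HTP gradient/thresholding step), and that the union‑bound factor $K$ appearing in \cref{localconvergence}(b) can be swallowed by the exponential as above. Beyond these bookkeeping steps there is no additional analysis, since all the probabilistic and algorithmic content is already contained in \cref{localconvergence}.
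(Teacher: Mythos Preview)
Your reduction is correct and matches the paper's approach: specialize \cref{localconvergence}(b) to $\beta=1$, $L=2s$, and identify \cref{alg:SAM} with \cref{alg:alm} plus an HTP inner solve. The only difference is in how the success probability is cleaned up. You apply \cref{localconvergence}(b) as a black box, inherit the factor $2K\le 2C_1\log m$ in front of $e^{-C_3 m}$, and then absorb it into a slightly smaller exponent $C_6<C_3$. The paper instead goes one level deeper: since $\beta=1$ forces $\I_k=[m]$ and hence $\bm{A}_k=\bm{A}$ for every $k$, the simultaneous RIP event \eqref{srip:union} is a \emph{single} RIP event regardless of $K$, so it holds for all $k$ (effectively $K=+\infty$) with probability at least $1-2e^{-c_2 m}$ --- the union-bound factor $K$ never enters. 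This is a cleaner and more conceptual way to get $1-e^{-C_6 m}-m^{-1}$, but your absorption argument is equally valid and requires no re-inspection of the proof of \cref{localconvergence}.
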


Notice that though the corollary is applied to \cref{alg:alm} with the HTP solver as in \cref{alg:SAM}, it is easy to extend the corollary to other robust CS solvers like IHT, CoSaMP and PDASC as mentioned in \cref{subsection22}. Since CoPRAM \cite{jagatap2019sample} is actually \cref{alg:alm} where the subproblem in Line 6 is solved exactly by CoSAMP, \cref{converge:alm} yileds that CoPRAM gives the exact sparse phase retrieval in at most $O(\log m)$ steps. This new result is better than the theoretical result in the original paper \cite{jagatap2019sample} of CoPRAM, where only linear convergence rate has been proved.

\subsection{Initialization and Exact Recovery}\label{subsection:global}

As problem~\eqref{problem:alm} is non-convex, a good initialization is of great importance for the convergence of the algorithm. For problems like the standard phase retrieval with gaussian sensing matrix, random initial guess has been proved to work but at a cost of more measurements and iterations \cite{WaldspurgerPhase,chen2019gradient,tan2019online}. Nevertheless, such a similar result does not exist for the sparse phase retrieval problem. According to \cref{localconvergence}, to ensure the convergence and exact recovery of the underlying signal, a good initial guess is required for the proposed SAM algorithm.


There are various strategies in the literature to design the initialization required by \cref{localconvergence}. For example, one can first estimate the support statistically, and then the signal on the support is computed by a spectral method. Here we follow the initialization technique introduced in \cite{jagatap2019sample}, where $\bm{x}_{0}$ is generated by the following two steps:
\begin{enumerate}\label{ini}
\item[(Init-1)]We first estimate the support of the initial guess. Notice that, for $j=1,2,\cdots,n$, we have
$$
\mathsf{E}\left[\frac{1}{m}\sum_{i=1}^{m}y_i^2a_{ij}^2\right]=\mathsf{E}\left[\frac{1}{m}\sum_{i=1}^{m}\left(\sum_{k=1}^n{x_k^{\natural}a_{ik}}\right)^2a_{ij}^2\right]=
\|\bm{x}^{\natural}\|_2^2+2(x_j^{\natural})^2.
$$
Thus, the support of top-$s$ entries in $\left\{\frac{1}{m}\sum_{i=1}^{m}y_i^2a_{ij}^2\right\}_{j=1}^{n}$ could be be a good approximation of the support of $\bm{x}^{\natural}$. So, we estimate the support of the initial guess by that of the top-$s$ entries in $\left\{\frac{1}{m}\sum_{i=1}^{m}y_i^2a_{ij}^2\right\}_{j=1}^{n}$, denoted by $\S_0$.
\item[(Init-2)] By noticing that $[\pm\bm{x}^{\natural}]_{\S_0}$ are principal eigenvectors of the expectation of $\frac{1}{m}\sum_{i=i}^my_i^2[\bm{a}_{i}]_{\S _0}[\bm{a}_{i}]_{\S _0}^T$, and $\|\bm{x}^{\natural}\|_2^2$ is the expectation of $\frac1m\|\bm{y}\|_2^2$. We let $[\bm{x}_0]_{\S_0}$ be a principal eigenvector of $\frac{1}{m}\sum_{i=i}^m y_i^2[\bm{a}_{i}]_{\S_0}[\bm{a}_{i}]_{\S_0}^T$ with length $\frac{1}{\sqrt{m}}\|\bm{y}\|_2$, and $[\bm{x}_0]_{{\S_0}^c}=\bm{0}$.
\end{enumerate}
The algorithm is summarized in \cref{alg:initialization}.

\begin{algorithm}[hbt!]
   \caption{(A Spectral Initialization \cite{jagatap2019sample})}\label{alg:initialization}
   \begin{algorithmic}[1]
   \STATE Input: The sensing matrix $\bm{A}\in \mathbb{R}^{m\times n}$, the observed data $\bm{y}$, the sparsity level $s$.
     \STATE Let $\S_0$ be the indices of top-$s$ entries in $\left\{\frac{1}{m}\sum_{i=1}^{m}y_i^2 a_{i,j}^2\right\}_{j=1}^{m}$.
     \STATE Obtain $\tilde{\bm{x}}_{0}$ by setting $[\tilde{\bm{x}}_{0}]_{{\S_0}^c}=\bm{0}$ and $[\tilde{\bm{x}}_{0}]_{\S_0}$ a unit principal eigenvector of the matrix $\frac{1}{m}\sum\limits_{i=i}^m y_i^2[\bm{a}_{i}]_{\S _0}[\bm{a}_{i}]_{\S _0}^T$.
     \STATE Compute $\bm{x}_{0}=\frac{1}{\sqrt{m}}\|\bm{y}\|_2\cdot\tilde{\bm{x}}_{0}$.
    \end{algorithmic}
\end{algorithm}

The following \cref{lemma:initialization} (which is also \cite[Theorem~IV.1]{jagatap2019sample}) shows that \cref{alg:initialization} produces a good initial guess under a suitable assumption on the sample size $m$.
\begin{lemma}[{\cite[Theorem IV.1]{jagatap2019sample}}]\label{lemma:initialization}
Let $\bm{x}^{\natural}\in\mathbb{R}^{n}$ be satisfying $\|\bm{x}^{\natural}\|_0\leq s$, and $\bm{A}\in\mathbb{R}^{m\times n}$ be a random matrix whose entries are i.i.d. drawn from $\N(0,1)$. Let $\bm{x}_0$ be generated by \cref{alg:initialization} with $\bm{y}=\lv \bm{A}\bm{x}^{\natural}\rv$. Then for any $\eta\in (0,1)$, there exists a positive constant $C_0$ depending only on $\eta$ such that: if provided $m\ge C_0 s^2\log n$, then with probability at least $1-8m^{-1}$ it holds that
\begin{align*}
\mathrm{dist}\left(\bm{x}_{0},\bm{x}^{\natural}\right)\le \eta\lV\bm{x}^{\natural}\rV_2.
\end{align*}
\end{lemma}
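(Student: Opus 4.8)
The plan is to follow the standard two-moment strategy for spectral initialization, analyzing the support-selection step (Init-1) and the eigenvector step (Init-2) separately, and then combining the two estimates. First I would set up the population versions of the relevant statistics. Writing $D_j := \frac{1}{m}\sum_{i=1}^m y_i^2 a_{ij}^2$, the excerpt already computes $\mathsf{E}[D_j] = \|\bm{x}^{\natural}\|_2^2 + 2(x_j^{\natural})^2$; the key point is that $D_j$ is a gap statistic that is strictly larger (by $2(x_j^{\natural})^2 \ge 2(x_{\min}^{\natural})^2$) on $\mathrm{supp}(\bm{x}^{\natural})$ than off it (where it equals $\|\bm{x}^{\natural}\|_2^2$). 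To show $\S_0 = \mathrm{supp}(\bm{x}^{\natural})$ (or at least contains it up to harmless overlap), I would prove a uniform-over-$j$ concentration bound $|D_j - \mathsf{E}[D_j]| \le \varepsilon$ for all $j \in [n]$ with probability $1 - O(m^{-1})$. Since $y_i^2 a_{ij}^2$ is a product of (sub-exponential)$\times$(sub-exponential) of sums of Gaussians, each $D_j$ is a sum of $m$ i.i.d. heavy-tailed (but sub-Weibull, tail like $e^{-c\sqrt{t}}$) random variables; a Bernstein-type bound for such variables gives deviation $\varepsilon$ once $m \gtrsim \varepsilon^{-2}\log n$, and a union bound over $n$ coordinates costs the $\log n$. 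Choosing $\varepsilon$ a small multiple of $(x_{\min}^{\natural})^2$ forces the top-$s$ entries of $\{D_j\}$ to be exactly the support. The factor $s^2$ (rather than $s$) in the hypothesis $m \ge C_0 s^2 \log n$ enters here because the worst case is $(x_{\min}^{\natural})^2 \sim \|\bm{x}^{\natural}\|_2^2/s$, so $\varepsilon$ must scale like $1/s$, giving $\varepsilon^{-2} \sim s^2$.

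Next, conditioned on $\S_0 = \mathrm{supp}(\bm{x}^{\natural}) =: \S$ (an event of probability $1 - O(m^{-1})$ by the previous step), I would analyze the $s\times s$ matrix $\bm{M} := \frac{1}{m}\sum_{i=1}^m y_i^2 [\bm{a}_i]_{\S}[\bm{a}_i]_{\S}^T$. Its population version is $\mathsf{E}[\bm{M}] = \|\bm{x}^{\natural}\|_2^2 \bm{I}_s + 2\, \bm{x}^{\natural}_{\S}(\bm{x}^{\natural}_{\S})^T$, whose top eigenvector is exactly $\bm{x}^{\natural}_{\S}/\|\bm{x}^{\natural}\|_2$ with eigenvalue $3\|\bm{x}^{\natural}\|_2^2$ and spectral gap $2\|\bm{x}^{\natural}\|_2^2$. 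I would establish $\|\bm{M} - \mathsf{E}[\bm{M}]\|_2 \le \varepsilon' \|\bm{x}^{\natural}\|_2^2$ with high probability using a matrix Bernstein inequality (or an $\varepsilon$-net argument over $S^{s-1}$), which requires only $m \gtrsim s\log n$ (the dimension is now $s$, not $n$) — so the binding constraint remains the $s^2\log n$ from the support step. A Davis–Kahan/$\sin\Theta$ argument then converts the $\varepsilon'\|\bm{x}^{\natural}\|_2^2$ operator-norm error and the $2\|\bm{x}^{\natural}\|_2^2$ spectral gap into a bound $\mathrm{dist}(\tilde{\bm{x}}_0, \bm{x}^{\natural}_{\S}/\|\bm{x}^{\natural}\|_2) \lesssim \varepsilon'$ on the unit eigenvector (up to sign). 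Finally I would handle the scaling: $\bm{x}_0 = \frac{1}{\sqrt m}\|\bm{y}\|_2 \tilde{\bm{x}}_0$ with $\tilde{\bm{x}}_0$ supported on $\S$, and $\frac{1}{m}\|\bm{y}\|_2^2 = \frac{1}{m}\sum_i (\langle \bm{a}_i,\bm{x}^{\natural}\rangle)^2$ concentrates around $\|\bm{x}^{\natural}\|_2^2$ by a standard $\chi^2$-type bound, again costing only $m\gtrsim \log n$ or even just large $m$. Combining the directional error and the norm error via the triangle inequality gives $\mathrm{dist}(\bm{x}_0,\bm{x}^{\natural}) \le \eta\|\bm{x}^{\natural}\|_2$ after choosing $\varepsilon,\varepsilon'$ small enough relative to $\eta$, which in turn fixes $C_0 = C_0(\eta)$.

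The main obstacle is the concentration of the support statistic $D_j$: the summands $y_i^2 a_{ij}^2 = \big(\sum_k x_k^{\natural} a_{ik}\big)^2 a_{ij}^2$ are products of dependent quadratic forms in Gaussians (the coordinate $a_{ij}$ appears in both factors when $j \in \S$), so they are only sub-exponential-squared (sub-Weibull with tail parameter $1/2$), not sub-exponential; one must use the appropriate generalized Bernstein inequality for such tails, tracking the $\log n$ from the union bound and verifying that it does not degrade the sample complexity beyond $s^2\log n$. Everything else — the matrix perturbation bound on the low-dimensional $\bm{M}$, the $\sin\Theta$ step, and the norm concentration — is routine. Since this lemma is quoted verbatim as \cite[Theorem~IV.1]{jagatap2019sample}, I would in practice either cite that proof directly or reproduce the argument above; the sketch here is the reconstruction I would write if a self-contained proof were required.
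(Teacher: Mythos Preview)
The paper does not prove this lemma at all; it is stated verbatim as a citation of \cite[Theorem~IV.1]{jagatap2019sample} and used as a black box, exactly as you anticipate in your final sentence. Your reconstruction is the standard two-stage argument from that reference and is broadly correct.

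One substantive point to tighten if you ever write it out in full: you condition the eigenvector step on the event $\S_0 = \mathrm{supp}(\bm{x}^{\natural})$, but with no assumption on $x_{\min}^{\natural}$ this event need not hold under $m \gtrsim s^2\log n$ alone (an entry with $|x_j^{\natural}|$ arbitrarily small cannot be separated from zero by any finite sample). The actual argument in \cite{jagatap2019sample} does not require exact support recovery; instead it shows that any index $j \in \mathrm{supp}(\bm{x}^{\natural}) \setminus \S_0$ must satisfy $|x_j^{\natural}|^2 \lesssim \|\bm{x}^{\natural}\|_2^2/s$, so that $\|\bm{x}^{\natural}_{\S_0^c}\|_2$ is controlled and the Davis--Kahan step on the restricted matrix still delivers $\mathrm{dist}(\bm{x}_0,\bm{x}^{\natural}) \le \eta\|\bm{x}^{\natural}\|_2$. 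Your parenthetical ``or at least contains it up to harmless overlap'' gestures at this, but the subsequent analysis should be written for that case rather than for exact recovery.
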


By combining the initialization (\cref{lemma:initialization}) and the local convergence (\cref{localconvergence}), we have the following recovery guarantee for our proposed SAM algorithm.
\begin{theorem}[Recovery guarantee of SAM]\label{convergence}
Let $\bm{x}^{\natural}\in\mathbb{R}^{n}$ be satisfying $\|\bm{x}^{\natural}\|_0\leq s$, and $\bm{A}\in\mathbb{R}^{m\times n}$ be a random matrix whose entries are i.i.d. drawn from $\N(0,1)$. Let $\{\bm{x}_{k}\}_{k\ge 0}$ be the iteration sequence produced by \cref{alg:SAM} with $\bm{y}=\lv \bm{A}\bm{x}^{\natural}\rv$, $\beta\in [\frac{1}{10},1]$, $L\geq 1$, and $\bm{x}_{0}$ generated by \cref{alg:initialization}. There exist universal constants $C_6, C_7, C_8>0$ such that: whenever $m\ge C_6 s^2\log(n/s)$,
\begin{enumerate}
\item[(a)] with probability at least $1-2Ke^{-C_8 m}-8m^{-1}$
\begin{align*}
\mathrm{dist}\left(\bm{x}_{k+1},\bm{x}^{\natural}\right)\le \alpha_0\cdot \mathrm{dist}\left(\bm{x}_{k},\bm{x}^{\natural}\right),\qquad\forall~k=0,1,2,\cdots,K-1.
\end{align*}
\item[(b)] if $L\geq 2s$, then with probability at least $1-2Ke^{-C_8 m}-9m^{-1}$,
\begin{align*}
\mathrm{dist}\left(\bm{x}_{k+1},\bm{x}^{\natural}\right)
\begin{cases}
\le \alpha_0\cdot \mathrm{dist}\left(\bm{x}_{k},\bm{x}^{\natural}\right), &\forall~k=0,1,2,\cdots,K-1,\cr
=0, &\forall~k\ge K-1.
\end{cases}
\end{align*}
for some $K \leq C_7\log m$.
\end{enumerate}
\end{theorem}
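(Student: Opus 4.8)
The plan is to derive \cref{convergence} by composing the spectral-initialization guarantee \cref{lemma:initialization} with the local-convergence guarantee \cref{localconvergence}, merging the two failure probabilities by a union bound; observe that the quadratic sample count $m\gtrsim s^2\log(n/s)$ comes entirely from the initialization, since \cref{localconvergence} only needs $m\gtrsim s\log(n/s)$. Concretely, I would first invoke \cref{lemma:initialization} with the accuracy parameter $\eta=\sqrt{\beta}/8$; because $\beta\in[\frac{1}{10},1]$ is a fixed constant, the resulting constant $C_0=C_0(\eta)$ is universal, and the lemma yields: whenever $m\ge C_0 s^2\log n$, the output $\bm{x}_0$ of \cref{alg:initialization} obeys $\mathrm{dist}(\bm{x}_0,\bm{x}^{\natural})\le\frac{\sqrt{\beta}}{8}\lV\bm{x}^{\natural}\rV_2$ on an event $\E_1=\E_1(\bm{A})$ of probability at least $1-8m^{-1}$ --- precisely the deterministic hypothesis demanded by \cref{localconvergence}.

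Next I would pick the constant $C_6$ large enough that $m\ge C_6 s^2\log(n/s)$ forces both $m\ge C_0 s^2\log n$ and $m\ge C_2\beta^{-2}s\log(n/s)$: the latter is immediate from $C_6 s\ge C_6\ge C_2\beta^{-2}$ (using $\beta^{-2}\le 100$), and the former holds because in the meaningful sparse regime ($s\lesssim\sqrt{n}$, forced by $m<n$ together with $m\gtrsim s^2$) one has $\log(n/s)\asymp\log n$, so a sufficiently large $C_6$ absorbs the discrepancy. With such a $C_6$, \cref{localconvergence} applies and produces a further event $\E_2$, measurable with respect to $\bm{A}$ and the Bernoulli index sets $\I_1,\dots,\I_K$, on which claim (a) holds for every starting point in the ball $\{\bm z:\mathrm{dist}(\bm z,\bm{x}^{\natural})\le\frac{\sqrt{\beta}}{8}\lV\bm{x}^{\natural}\rV_2\}$, and, when $L\ge 2s$, so does the finite-termination dichotomy (b) with some $K\le C_1\log m$; the complement of $\E_2$ has probability at most $2Ke^{-C_3\beta^2 m}$ in case (a) and at most $2Ke^{-C_3\beta^2 m}+m^{-1}$ in case (b).

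On the intersection $\E_1\cap\E_2$, the random initializer $\bm{x}_0$ lies in the prescribed ball by $\E_1$, and then $\E_2$ forces the SAM iterates $\{\bm{x}_k\}$ to obey the contraction (a), respectively the dichotomy (b). Setting $C_7=C_1$ and letting $C_8$ absorb the fixed factor $\beta^2$ (so that $e^{-C_8 m}=e^{-C_3\beta^2 m}$), the union bound $\Pr[(\E_1\cap\E_2)^c]\le\Pr[\E_1^c]+\Pr[\E_2^c]$ gives probability at least $1-2Ke^{-C_8 m}-8m^{-1}$ for (a) and at least $1-2Ke^{-C_8 m}-9m^{-1}$ for (b), which is exactly the assertion.

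The one genuinely delicate point --- and the step I would flag as the main obstacle --- is the legitimacy of feeding the data-dependent vector $\bm{x}_0$, itself a function of $\bm{A}$, into \cref{localconvergence}, whose randomness also comes from $\bm{A}$; a naive conditioning argument is unavailable. The resolution is that the proof of \cref{localconvergence} uses no property of $\bm{x}_0$ beyond membership in the ball: its success event $\E_2$ is assembled from restricted-isometry and concentration estimates for $\bm{A}$ and for the sampled submatrices $\bm{A}_{k+1}$ that hold uniformly over all admissible starting points and all Bernoulli draws, so $\E_2$ is independent of the mechanism generating $\bm{x}_0$ and the conclusion survives the correlation between $\bm{x}_0$ and $\bm{A}$. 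The remaining mismatch between the $\log n$ of \cref{lemma:initialization} and the $\log(n/s)$ of the statement is pure bookkeeping and harmless in the sparse regime.
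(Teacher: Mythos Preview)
Your proposal is correct and follows exactly the paper's approach: the paper's own proof is a two-sentence remark that the theorem ``is a direct consequence of \cref{localconvergence} and \cref{lemma:initialization}'' and that the bounded range $\beta\in[\tfrac{1}{10},1]$ lets one absorb the $\beta$-dependence into new universal constants. You have simply spelled out the union bound and the constant juggling more carefully, and you correctly identify (and resolve) the one subtlety the paper leaves implicit, namely that the success event of \cref{localconvergence} is uniform over all admissible starting points and so survives the correlation between the data-dependent $\bm{x}_0$ and $\bm{A}$.
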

\begin{proof}
The theorem is a direct consequence of \cref{localconvergence} and \cref{lemma:initialization}. Since $\beta$ is bounded both above and below, the universal constants in \cref{localconvergence} can be combined with $\beta$ to obtain new universal constants, and the dependency of $C_0$ in \cref{lemma:initialization} on $\eta$ (and hence $\beta$) can be eliminated.
\end{proof}


By the same argument as \cref{convergence}, the recovery guarantee of \cref{alg:alm} can also be established, which ensures that \cref{alg:alm} with an initialization by \cref{alg:initialization} gives an exact recovery of $\bm{x}^{\natural}$ in at most $O(\log m)$ steps with high probability if provided $m\ge O(s^2\log n)$.

The required sample size $m= O(s^2\log n)$ in \cref{convergence} is dominated by the initialization stage, as \cref{lemma:initialization} requires $m= O(s^2\log n)$ for initialization and \cref{localconvergence} requires only $m= O(s\log n)$ for local convergence. It is possible to improve the sampling complexity by improving that of the initialization stage. There are several ways as in the following.
\begin{itemize}
\item
We may make further assumption on the distribution nonzero entries of $\bm{x}^{\natural}$. For instance, if the nonzero entries of the underlying signal $\bm{x}^{\natural}$ follow a power-law decay, then the sampling complexity of \cref{alg:initialization} can be reduced to $m=O(s\log n)$. See {\cite[Theorem IV.4]{jagatap2019sample}} for detailed discussions. By this way, the sampling complexity of our proposed SAM algorithm is improved to $O(s\log n)$.
\item
We may employ other initialization techniques, e.g., the one-step Hadamard Wirtinger flow introduced in \cite{wu2021hadamard} to produce an initial guess. Provided $x_{\min}^{\natural}\ge O(s^{-\frac12})$ and $m\ge O(s (x_{\max}^{\natural})^{-2}\log n)$, the Hardmard Wirtinger flow is able to produce an initial guess $\bm{x}_0$ satisfying
$\mathrm{dist}\left(\bm{x}_{0},\bm{x}^{\natural}\right)\le \eta\lV\bm{x}^{\natural}\rV_2$ for any positive $\eta$. See {\cite[Lemma 3]{wu2021hadamard}} and {\cite[Proposition 1]{wang2017solving}}. This initialization may lead a better theoretical sample complexity, but practically it requires multiple restarts, thus not as efficient as \cref{alg:initialization} in terms of computational cost.
\end{itemize}

\section{Numerical Results and Discussions}\label{section:experiments}

In this section, we present some numerical simulations of the proposed SAM algorithm and demonstrate its advantages over state-of-the-art algorithms for sparse phase retrieval.

\paragraph{Experimental Settings}
All numerical simulations are run on a laptop with an eight-core processor i$7-10870$H and $16$ GB memory using MATLAB $2021$a. The sampling vectors $\{\bm{a}_i\}_{i=1}^m$ are i.i.d. random Gaussian vectors with mean $\bm{0}$ and covariance matrix $\bm{I}$. The support of $\bm{x}^{\natural}$ are uniformly drawn from all $s$-subsets of $[n]$ at random, and its nonzero entries are randomly drawn from i.i.d. standard normal distribution. To achieve the best empirical performance, we set the maximum number of inner HTP iterations $L=3$ in \cref{alg:SAM} in all the experiments.

In experiments without noise, the parameters of stopping criteria for SAM is set as $K=200$ and $\epsilon=10^{-3}$ (i.e. $\lV\bm{x}_{k+1}-\bm{x}_{k}\rV_2 /\lV\bm{x}_{k}\rV_2\le 10^{-3}$).  An exact recovery is regarded if the output $\hat{\bm{x}}$ of the algorithm satisfies $\mathrm{dist}(\hat{\bm{x}},\bm{x}^{\natural})\le 10^{-3}\|\bm{x}^{\natural}\|_2$.

In experiments with noise, we still denote $\bm{y}$ the clean data ( i.e., $\bm{y}=|\bm{A}\bm{x}^{\natural}|$), and the noisy observed data $\bm{y}^{(\varepsilon)}$ is obtained by adding a Gaussian additive noise to $\bm{y}$ as in the following
$$
y_i^{(\varepsilon)}=y_i+\sigma\varepsilon_i,\quad i=1,\ldots,m,
$$
where $\varepsilon_i$ for $i=1,\ldots,m$ are randomly drawn from the standard normal distribution and $\sigma>0$ is the standard deviation. Therefore, the noise level is determined by $\sigma$. We set the parameters of stopping criteria for SAM as  $K=200$ and $\epsilon=10^{-3}+\sigma$ in the noisy case.

\paragraph{Experiment 1: Effect of random batch selection}
Stochasticity is a key feature of the proposed SAM algorithm to improve the empirical sampling complexity. In this experiment, we run the proposed SAM algorithm (\cref{alg:SAM}) with different Bernoulli probability $\beta$, i.e., $\beta=0.4,0.6,0.8,1$, respectively. We choose the signal dimension $n=1000$, the sparsity level $s=15, 30$ with various sample sizes $m$. For each set of parameters, we run $100$ independent trials on randomly chosen $\bm{A}$. The successful recovery rates are plotted in \cref{phasetrans_beta}.

From \cref{phasetrans_beta}, we see that \cref{alg:SAM} with $\beta=0.8, 0.6, 0.4$ has higher successful recovery rates than with $\beta=1$. This indicatess that the random batch technique in the proposed SAM algorithm can decrease the sample size $m$ empirically.

\begin{figure}[!htb]
\centering
\subfigure[n=1000, s=15]{\includegraphics[clip=true,width=0.4\textwidth]{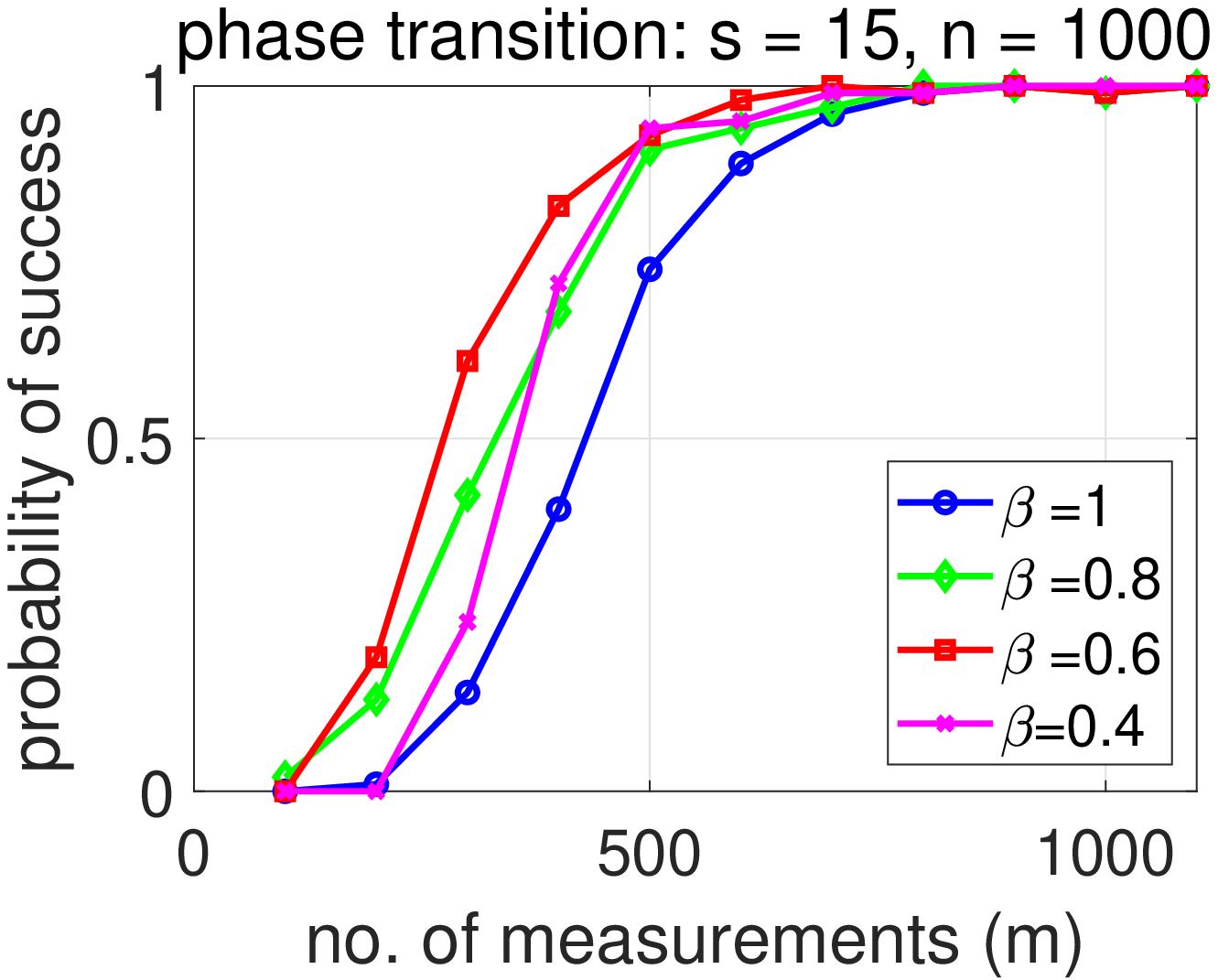}}
\subfigure[n=1000, s=30]{\includegraphics[clip=true,width=0.4\textwidth]{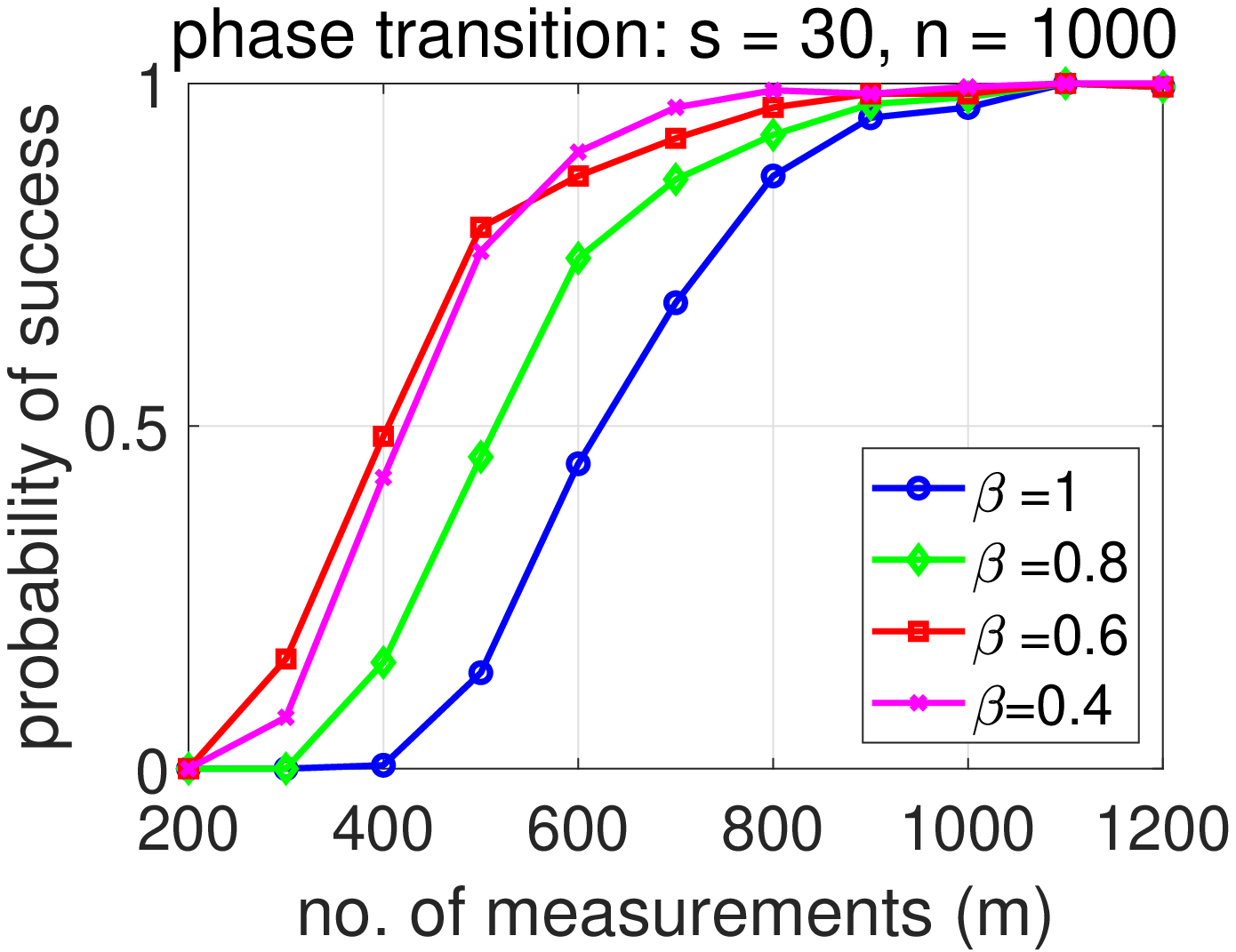}}
\caption{\label{phasetrans_beta}The successful recovery rates of \cref{alg:SAM} with different $\beta$. The signal dimension is $n=1000$, and the sparsity levels are $s=15$ (left) and $s=30$ (right). The successful recovery rates are obtained by $100$ independent trial runs.}
\end{figure}

\paragraph{Experiment 2: Comparison with state-of-the-art algorithms} We compare the proposed SAM algorithm with state-of-the-art algorithms, including CoPRAM \cite{jagatap2019sample}, Thresholded Wirtinger Flow (ThWF) \cite{cai2016optimal} and SPARse Truncated Amplitude flow (SPARTA) \cite{wang2016sparse}, in terms of sampling efficiency and running time. For the SAM algorithm, we fix $\beta=0.6$. For other algorithms, the parameters are set to the recommended values in the corresponding papers.

We first compare the number of measurements required by different algorithms. The signal dimension is fixed to be $n=1000$. For each set of parameters $(n,m,s)$ and each algorithm, we perform $100$ tests on randomly chosen $\bm{A}$. We plot in \cref{phasetrans_s} the successful recovery rates of different algorithms with the sparsity level $s=15,30$ and various sample sizes $m$. Moreover, \cref{PhaseTrans} depicts the phase transitions of different algorithms with various sparsity levels ranging from $s=5$ to $s=50$ and various sample size ranging from $m=100$ to $m=1200$. In this figure, a successful recovery rate is described by the gray level of the corresponding block: a white block represents a $100\%$ successful recovery rate, a black block $0\%$, and a grey block between $0\%$ and $100\%$. \cref{phasetrans_s} and \cref{PhaseTrans} show that SAM (\cref{alg:SAM}) requires less measurements than other algorithms for the same successful recovery rate. 

Next, we demonstrate the computational efficiency of the SAM algorithm compared with existing sparse phase retrieval algorithms. We fix the signal dimension $n=3000$ and the sample size $m=2000$. The relative error is defined to be
 $$
r\left(\hat{\bm{x}},\bm{x}^{\natural}\right)= \frac{\mathrm{dist}(\hat{\bm{x}},\bm{x}^{\natural})}{\Vert \bm{x}^{\natural}\Vert_2}.
$$
 \cref{tab:cputime} lists running times required and relative error achieved by different algorithms. Here each reported running time and mean relative error is the average over $100$ trial runs with failed ones filtered out. We see that the SAM algorithm is better than state-of-art algorithms in terms of running time.
\begin{figure}[!htb]
\centering
\subfigure[n=1000, s=15]{\includegraphics[clip=true,width=0.4\textwidth]{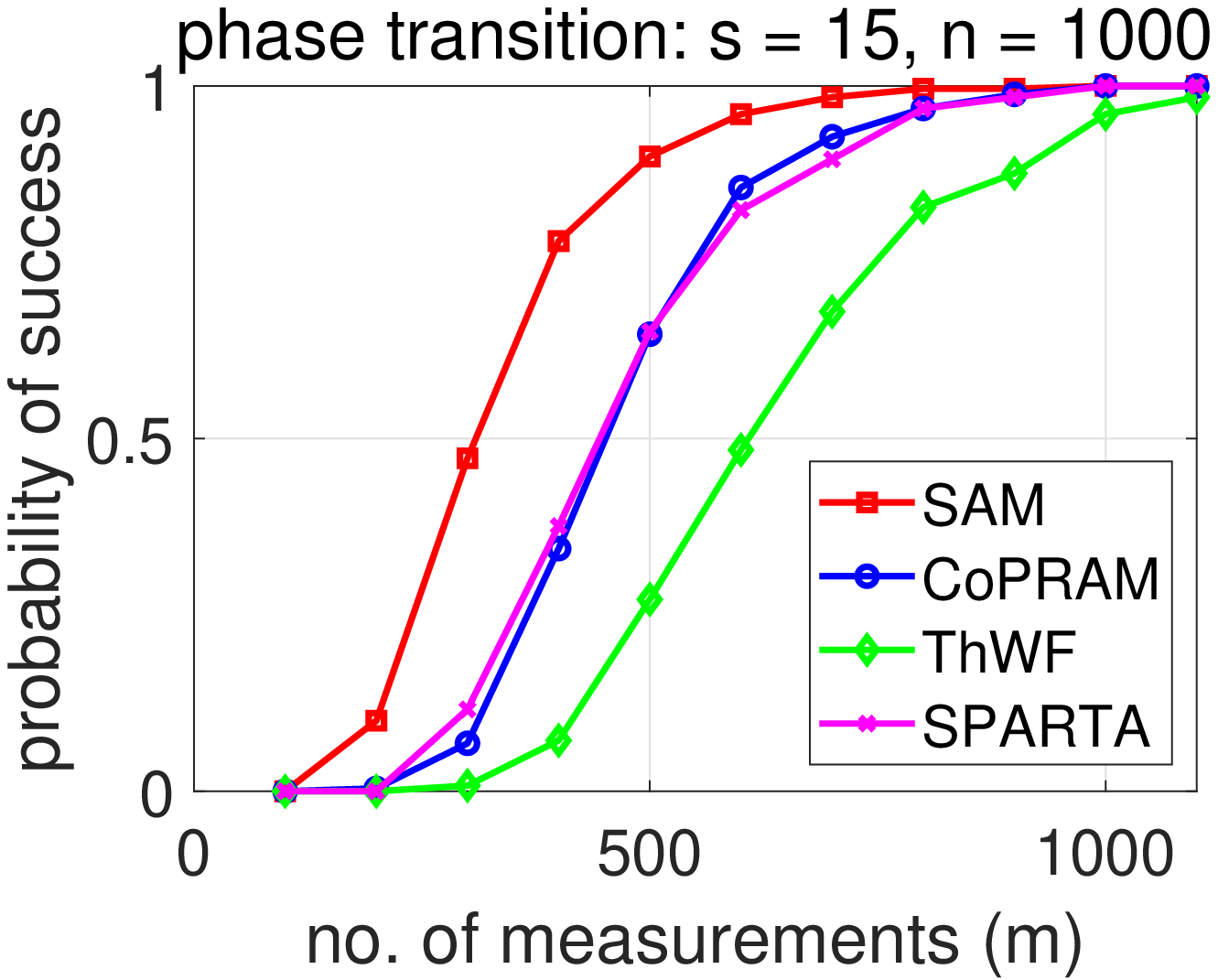}}
\subfigure[n=1000, s=30]{\includegraphics[clip=true,width=0.4\textwidth]{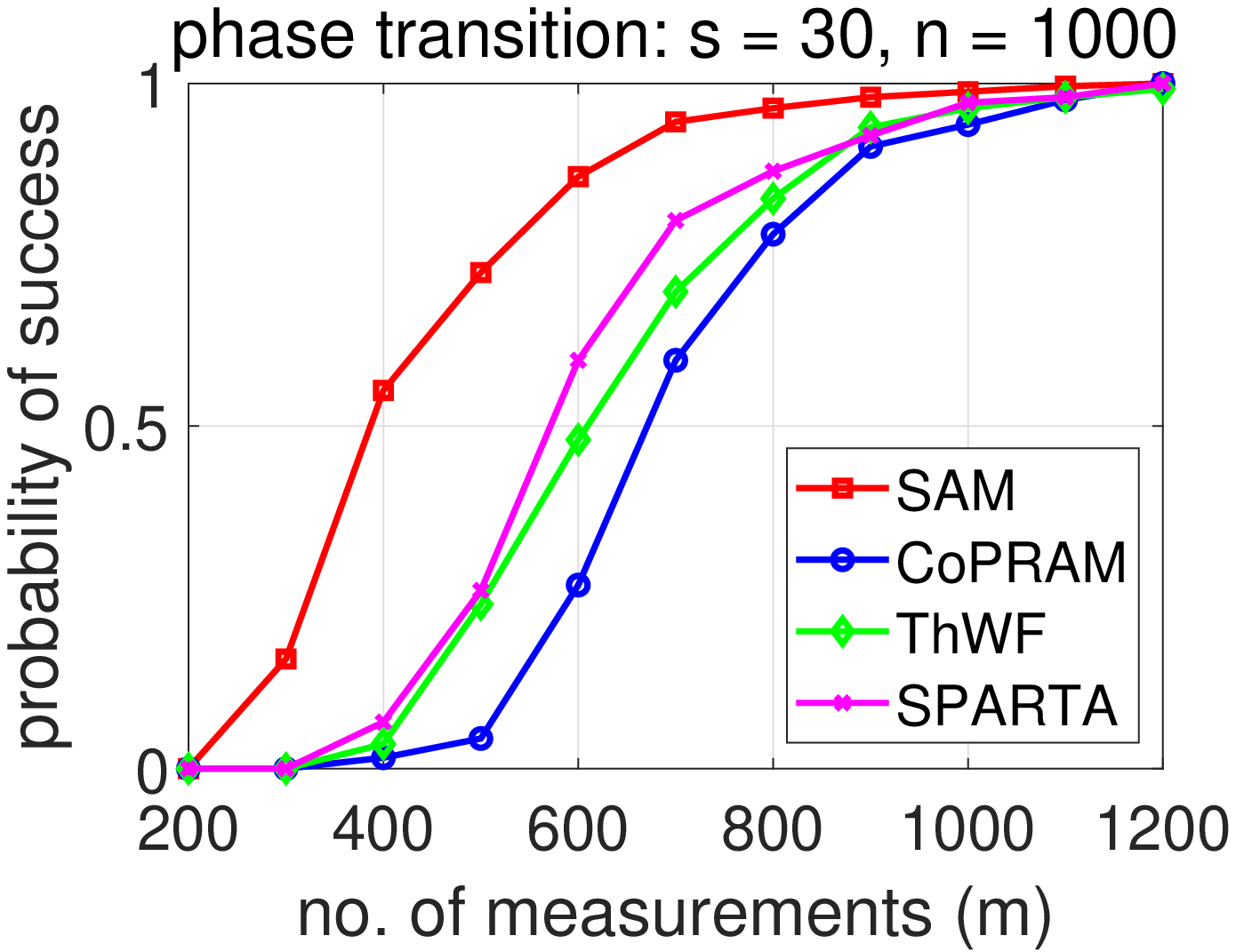}}
\caption{\label{phasetrans_s}The successful recovery rates of different algorithms. The signal dimension is $n=1000$, and the sparsity levels are $s=15$ (left) and $s=30$ (right).  The successful recovery rates are obtained by $100$ independent trial runs.}
\end{figure}
\begin{figure}[!htb]
\centering
\subfigure[ThWF]{\includegraphics[trim =1.5cm 0cm 1.5cm 0cm,clip=true,width=0.24\textwidth]{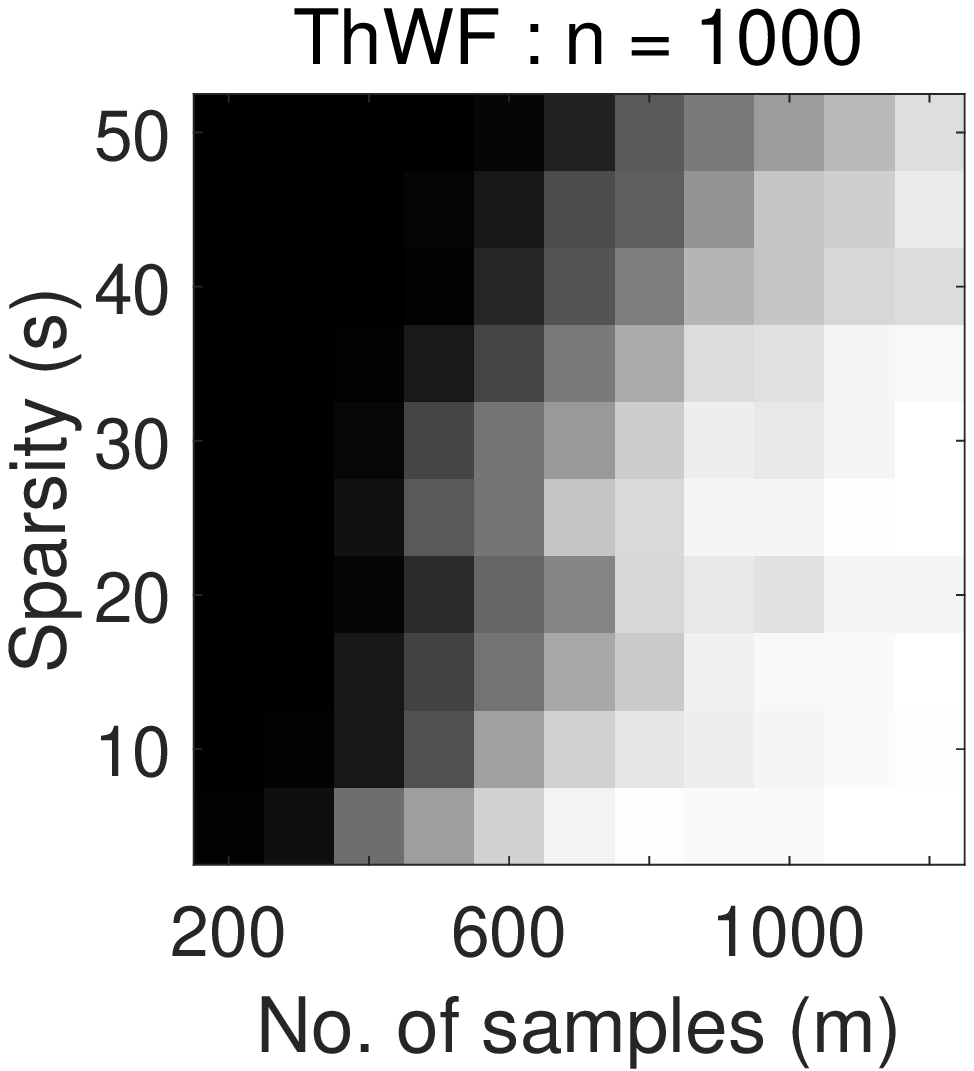}}
\subfigure[CoPRAM]{\includegraphics[trim =1.5cm 0cm 1.5cm 0cm,clip=true,width=0.24\textwidth]{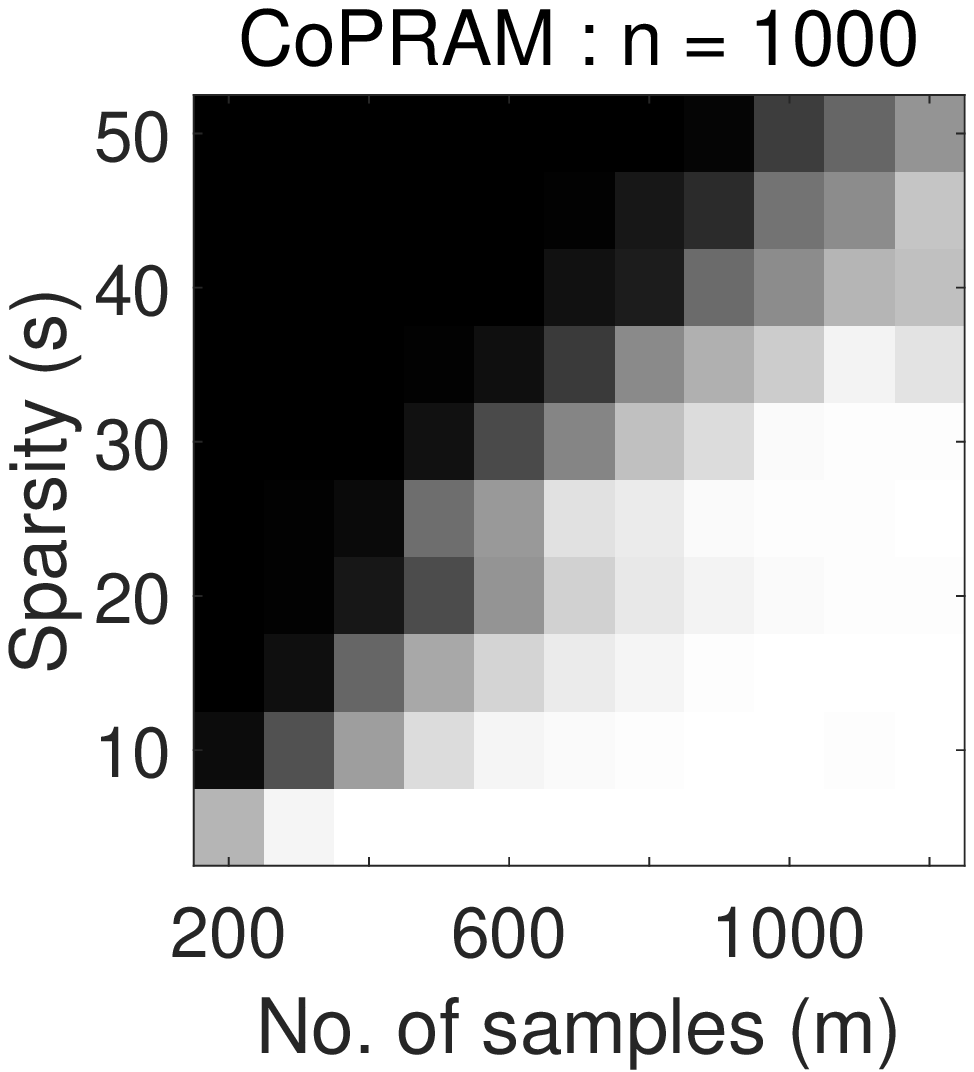}}
\subfigure[SPARTA]{\includegraphics[trim =1.5cm 0cm 1.5cm 0cm,clip=true,width=0.24\textwidth]{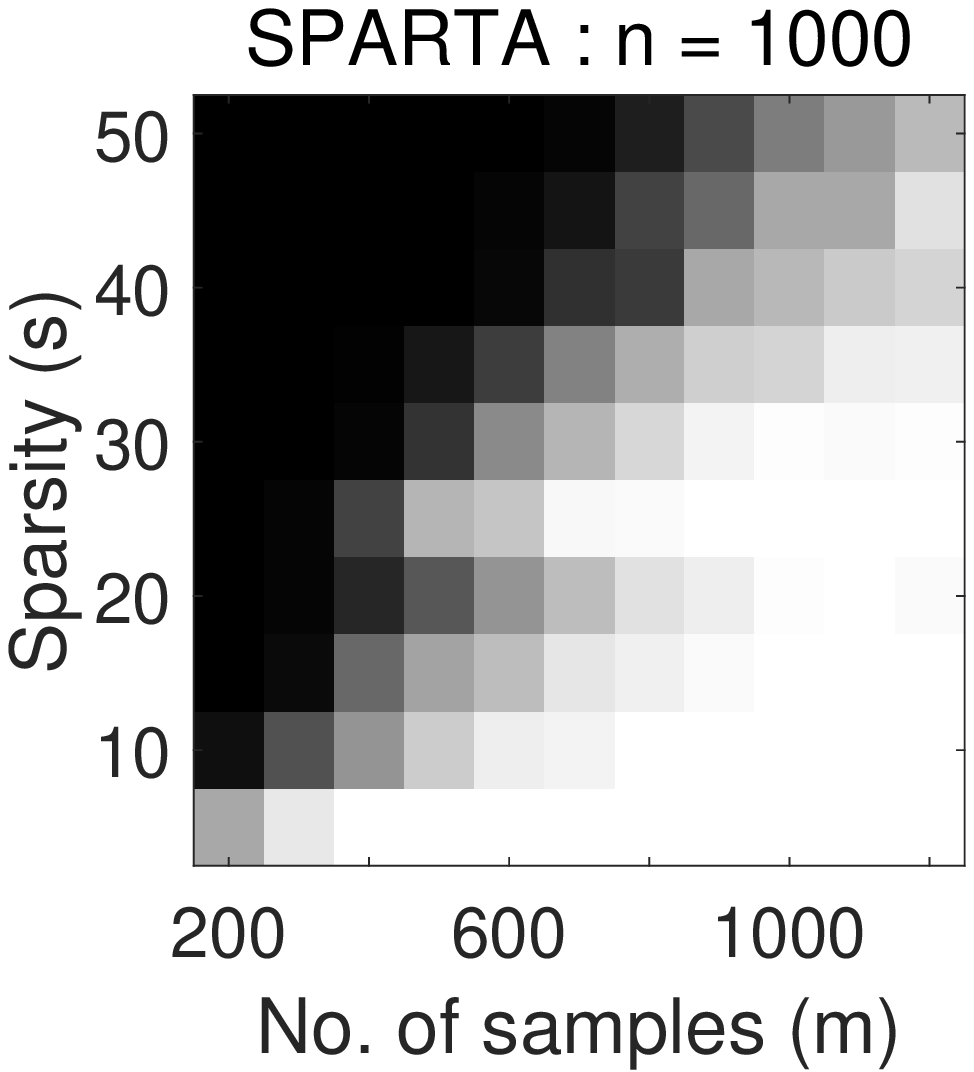}}
\subfigure[SAM]{\includegraphics[trim =1.5cm 0cm 1.5cm 0cm,clip=true,width=0.24\textwidth]{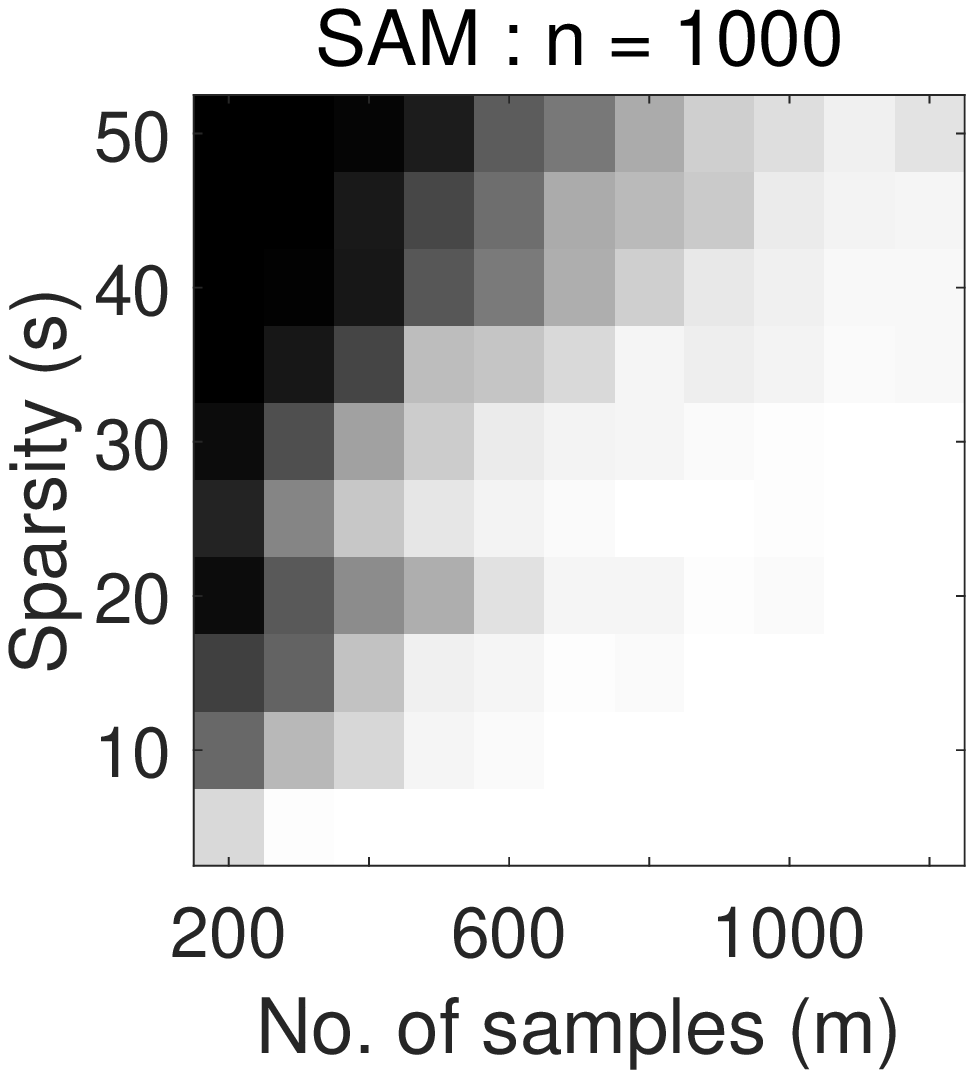}}
\caption{\label{PhaseTrans}Phase transition for different algorithms: signal dimension $n=1000$, the sparsity $s$ range from $5$ to $50$ with grid size $5$, and the sample size $m$ range from $200$ to $1200$ with grid size $100$. $\beta=0.6$ for SAM. The black block means successful recovery rate is $0\%$, the white block means successful recovery rate is $1$, and the grey block means successful recovery rate is between $0\%$ and $100\%$.}
\end{figure}

\begin{table}[htb!]
\footnotesize
\centering
\caption{SAM with state-of-the-art methods on running time in seconds (Time(s)). Mean relative error($r\left(\hat{\bm{x}},\bm{x}^{\natural}\right)$). Sparsity $s$, noise level $\sigma$. $m=2000, n=3000$.}\label{tab:cputime}
\begin{tabular}{cc|cc|cc|cc}
\hline\hline
&             &\multicolumn{2}{c|}{$s=20$, $\sigma=0$} &\multicolumn{2}{c|}{$s=30$, $\sigma=0$}      &\multicolumn{2}{c}{$s=40$, $\sigma=0$} \\
\hline
&Mehtod              &Time(s)   &$r\left(\hat{\bm{x}},\bm{x}^{\natural}\right)$ &Time(s)   &$r\left(\hat{\bm{x}},\bm{x}^{\natural}\right)$   &Time(s)  &$r\left(\hat{\bm{x}},\bm{x}^{\natural}\right)$  \\
 \hline
&ThWF  &$2.83\times 10^{-1}$  &$3.22\times 10^{-4}$  &$3.26\times 10^{-1}$  &$3.74\times 10^{-4}$                    &$3.92\times 10^{-1}$   &$5.07\times 10^{-4}$        \\
&SPARTA  &$2.58\times 10^{-1}$  &$5.56\times 10^{-4}$   &$3.29\times 10^{-1}$  &$6.08\times 10^{-4}$                    &$4.37\times 10^{-1}$  &$6.43\times 10^{-4}$  \\
&CoPRAM   &$1.21\times 10^{-1}$  &$1.04\times 10^{-4}$  &$1.63\times 10^{-1}$  &$1.82\times 10^{-4}$           &$2.31\times 10^{-1}$   & $2.33\times 10^{-4}$        \\
&SAM  &$1.08\times 10^{-1}$      &$8.65\times 10^{-8}$   &$1.22\times 10^{-1}$  &$3.41\times 10^{-7}$                      &$1.62\times 10^{-1}$   &$8.94\times 10^{-8}$        \\
\hline
&             &\multicolumn{2}{c|}{$s=20$, $\sigma=0.1$}   &\multicolumn{2}{c|}{$s=30$, $\sigma=0.1$} &\multicolumn{2}{c}{$s=40$, $\sigma=0.1$} \\
\hline
&Mehtod              &Time(s)   &$r\left(\hat{\bm{x}},\bm{x}^{\natural}\right)$   &Time(s)  &$r\left(\hat{\bm{x}},\bm{x}^{\natural}\right)$  &Time(s)  &$r\left(\hat{\bm{x}},\bm{x}^{\natural}\right)$  \\
 \hline
&ThWF  &$1.43\times 10^{-1}$  &$3.83\times 10^{-2}$    &$1.57\times 10^{-1}$  &$4.61\times 10^{-2}$                  &$1.86\times 10^{-1}$   &$5.34\times 10^{-2}$        \\
&SPARTA  &$2.83\times 10^{-1}$  &$1.27\times 10^{-2}$   &$3.41\times 10^{-1}$  &$1.59\times 10^{-2}$                    &$4.63\times 10^{-1}$  &$2.09\times 10^{-2}$  \\
&CoPRAM   &$1.41\times 10^{-1}$  &$1.31\times 10^{-2}$   &$1.89\times 10^{-1}$  &$1.70\times 10^{-2}$          &$2.86\times 10^{-1}$   & $2.27\times 10^{-2}$        \\
&SAM  &$7.42\times 10^{-2}$      &$1.90\times 10^{-2}$   &$8.94\times 10^{-2}$  &$2.07\times 10^{-2}$                      &$1.39\times 10^{-1}$   &$2.77\times 10^{-2}$        \\
\hline\hline
\end{tabular}
\end{table}
\paragraph{Experiment 3: Robustness to additive noise}
Although the theoretical results are for noiseless measurements only, the proposed SAM algorithm also works well for noisy data, which is demonstrated by the following experiment. We test the performance of \cref{alg:SAM} in the presence of an additive noise.  We then recover the sparse signal from $\bm{y}^{(\varepsilon)}$ by SAM. We set $n=5000$, $m=1500$, $s=20$, and we test SAM with $\beta=0.6$ under different noise level $\sigma$. In \cref{noiseRobust}, we plot the mean relative error by our SAM algorithm against the signal-to-noise ratios of $\bm{y}^{(\varepsilon)}$. The mean relative error are obtained by averaging $100$ independent trial runs with the failed recovery filtered out. We see from \cref{noiseRobust} that SAM is robust to the additive noise in the measurements.
\begin{figure}[!htb]
\centering
{\includegraphics[clip=true,width=0.5\textwidth]{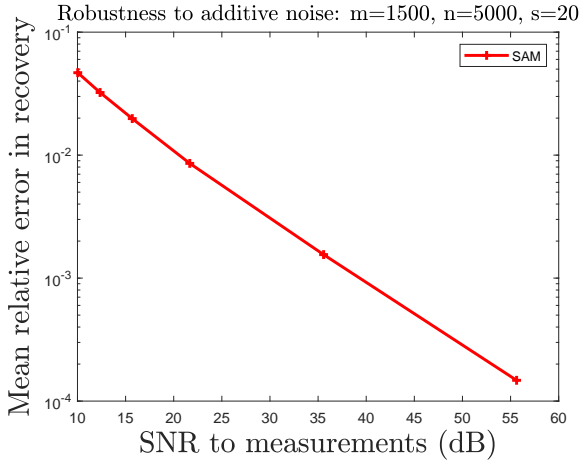}}
\caption{\label{noiseRobust} Mean relative error in the recovery ($\log_{10}$) vesus signal-to-noise ratios (SNR) of the measurements data. We set $n=5000, m=1500,s=20$.}
\end{figure}

\section{Proofs}\label{section:proofs}
In this section, we present proofs of our main results \cref{localconvergence} and \cref{converge:alm}. We first give some key lemmas and their proofs in \cref{subsec:lemmas}. Then, we prove \cref{localconvergence} and \cref{converge:alm} in \cref{subsec:localconvergence} and \cref{subsec:proofalm} respectively. To make the paper self-contained, we also provide in \cref{subsec:supportlemmas} some supporting lemmas (Lemmas~\ref{ARIP}--\ref{lemma:concentration}) from the literature .

\subsection{Key Lemmas}\label{subsec:lemmas}

In this subsection, we give some lemmas that play key roles in proofs of our main results.

In the proposed SAM algorithm, at the $k$-th iteration, we randomly pick a subset $\I_k$ from the set $[m]$ without repeat elements by using the Bernoulli model. When $|\I_k|$ is as large as $O\left(s\log(n/s)\right)$, it can be shown that the coefficient matrix $\bm{A}_{k}:=\bm{A}(\I_k,:)$ satisfies the restricted isometric property (RIP). It is well known that the RIP is a key condition in many algorithms and theory of compressed sensing. In the following \cref{sRIP}, we show that the coefficient matrices $\bm{A}_{1},~\bm{A}_{2},~\ldots,\bm{A}_{K}$ in the first $K$ iterations of \cref{alg:SAM} satisfies the RIP simultaneously. The lemma is an extension of the RIP of standard gaussian matrix, and it is crucial for the convergence analysis of SAM.

\begin{lemma}[Simultaneous RIP]\label{sRIP}
Let the sensing vectors $\{\bm{a}_i\}_{i=1}^m$ be Gaussian random vectors that are $i.i.d.$ sampled from the normal distribution $\N\left(\bm{0},\bm{I}\right)$. Let $r\le n$ be a given positive integer. Let $\I_1,\I_2,\cdots,\I_K$ be the $K$ random subsets of $\{1,2,\ldots,m\}$ generated by Step 5 of \cref{alg:SAM}. Define $\bm{A}_k=\bm{A}(\I_k,:)$ for any $k$.  Then, for any $\delta\in(0,1)$, there exists constants $c_1,c_2>0$ such that: if provided $m\ge c_1 \beta^{-2}r\log\left(n/r\right)$, with probability at least $1-2Ke^{-c_2 \beta^2 m}$ it holds that
\begin{align}\label{srip:union}
\left(1-\delta\right)\lV\bm{x}\rV_2^2\le \frac{1}{\beta m}\lV\bm{A}_{k}\bm{x}\rV^2_2\le \left(1+\delta\right)\lV\bm{x}\rV_2^2,
\qquad\forall~\bm{x}:\lV\bm{x}\rV_0\le r\mbox{ and }k=1,2,\ldots,K.
\end{align}
\end{lemma}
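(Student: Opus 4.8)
The goal is a simultaneous RIP statement for the random submatrices $\bm{A}_1,\dots,\bm{A}_K$, each obtained by Bernoulli subsampling the rows of a standard Gaussian matrix $\bm{A}$. The plan is to reduce the claim to the \emph{single}-matrix RIP bound for Gaussian matrices and then take a union bound over the $K$ iterations. First I would fix one index $k$ and condition on the realization of the random set $\I_k$. Conditioned on $|\I_k| = \ell$, the matrix $\bm{A}_k = \bm{A}(\I_k,:)$ has i.i.d.\ $\N(0,1)$ entries and is of size $\ell \times n$; by the standard Gaussian RIP result (this is \cref{ARIP}, cited later in the paper), for any target order $r$ and any $\delta \in (0,1)$ there are constants $\tilde c_1, \tilde c_2 > 0$ so that if $\ell \ge \tilde c_1\, r\log(n/r)$ then $\frac{1}{\ell}\lV\bm{A}_k\bm{x}\rV_2^2 \in [(1-\delta')\lV\bm{x}\rV_2^2,(1+\delta')\lV\bm{x}\rV_2^2]$ for all $r$-sparse $\bm{x}$, with failure probability at most $2e^{-\tilde c_2 \ell}$, where $\delta'$ is slightly smaller than $\delta$ (to absorb the mismatch between the normalization $\frac{1}{\ell}$ and the desired $\frac{1}{\beta m}$).

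The second step handles the random cardinality $|\I_k|$. Since $|\I_k| = \sum_{i=1}^m \mathbf{1}\{i \in \I_k\}$ is a sum of $m$ i.i.d.\ Bernoulli($\beta$) variables, $\mathsf{E}|\I_k| = \beta m$, and a Chernoff/Bernstein bound gives $|\I_k| \in [\tfrac12\beta m, \tfrac32\beta m]$ with probability at least $1 - 2e^{-c\beta m}$ for a universal $c>0$ (here it is convenient that $\beta$ is bounded below, but the argument only needs $\beta m$ large). On this event we have $|\I_k| \ge \tfrac12\beta m \ge \tfrac12 c_1 \beta^{-2} r\log(n/r) \cdot \beta = \tfrac12 c_1 \beta^{-1} r\log(n/r) \ge \tilde c_1 r\log(n/r)$ once $c_1$ is chosen appropriately (using $\beta \le 1$), so the conditional RIP of the previous paragraph applies; moreover, on this same event $\frac{|\I_k|}{\beta m} \in [\tfrac12,\tfrac32]$, so replacing the normalization $\frac{1}{|\I_k|}$ by $\frac{1}{\beta m}$ only rescales the isometry constants by a bounded factor, and by choosing $\delta'$ small enough relative to $\delta$ the desired two-sided bound \eqref{srip:union} holds for this $k$. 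Combining the two events, a single $\bm{A}_k$ satisfies its RIP with probability at least $1 - 2e^{-\tilde c_2 \cdot \frac12 \beta m} - 2e^{-c\beta m} \ge 1 - 2e^{-c_2\beta^2 m}$ after renaming constants (the $\beta^2$ rather than $\beta$ in the exponent is a harmless weakening, consistent with the sample-size hypothesis $m \ge c_1\beta^{-2}r\log(n/r)$ which forces $\beta m \ge c_1\beta^{-1}r\log(n/r) \ge \beta^2 m \cdot (\text{stuff})$; one simply states the weaker bound).

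The final step is a union bound: the event in \eqref{srip:union} fails for \emph{some} $k \in \{1,\dots,K\}$ with probability at most $K$ times the single-$k$ failure probability, i.e.\ at most $2Ke^{-c_2\beta^2 m}$, which is exactly the stated bound. Note we do \emph{not} need any independence between the sets $\I_1,\dots,\I_K$ for the union bound — it works regardless — which is good, since in \cref{alg:SAM} the $\I_k$ are drawn independently but the argument is robust to that.

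The main obstacle, such as it is, is bookkeeping rather than anything deep: one must carefully track how the target isometry constant $\delta$ propagates backwards through (i) the normalization mismatch $\frac{1}{|\I_k|}$ vs.\ $\frac{1}{\beta m}$ and (ii) the $|\I_k|$ vs.\ $\mathsf{E}|\I_k|$ fluctuation, choosing an intermediate $\delta'$ small enough that after both perturbations the final constant is still below $\delta$. One must also be slightly careful that the constant $c_1$ in the sample-size hypothesis is chosen \emph{after} $\delta'$ (hence after $\delta$), since $\tilde c_1$ depends on $\delta'$. None of this is hard; it is the routine "choose constants in the right order" step that any RIP-type argument requires.
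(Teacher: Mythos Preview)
Your approach is genuinely different from the paper's. The paper does not condition on $|\I_k|$ and then invoke the black-box Gaussian RIP; instead it handles the combined Bernoulli--Gaussian randomness in one shot. For a fixed $\bm{x}$ it writes $\frac{1}{\beta m}\lV\bm{A}_k\bm{x}\rV_2^2-\lV\bm{x}\rV_2^2=\frac{1}{m}\sum_{i=1}^m v_i$ with $v_i:=\frac{\xi_i}{\beta}\lv\bm{a}_i^T\bm{x}\rv^2-\lV\bm{x}\rV_2^2$ (here $\xi_i$ is the Bernoulli($\beta$) indicator of $i\in\I_k$), shows each $v_i$ is subexponential with tail parameter proportional to $\beta/\lV\bm{x}\rV_2^2$, applies Bernstein's inequality to obtain $\mathsf{P}\big(\big|\frac{1}{\beta m}\lV\bm{A}_k\bm{x}\rV_2^2-\lV\bm{x}\rV_2^2\big|>\varepsilon\lV\bm{x}\rV_2^2\big)\le 2e^{-c_0\varepsilon^2\beta^2 m}$, and then runs the standard $\varepsilon$-net covering over all $r$-sparse supports (via \cref{lemma:concentration}). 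The $\beta^2$ in the exponent falls out naturally from the Bernstein denominator. Your route is more modular (it reuses \cref{ARIP} wholesale) and actually yields the sharper exponent $\beta m$, which you then voluntarily weaken to $\beta^2 m$; the paper's route avoids the two-stage conditioning and the normalization-mismatch bookkeeping.

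There is, however, one concrete slip in your bookkeeping. With the Chernoff window $|\I_k|\in[\tfrac12\beta m,\tfrac32\beta m]$, the ratio $|\I_k|/(\beta m)$ can be anywhere in $[\tfrac12,\tfrac32]$, and this factor multiplies the \emph{entire} quantity $\frac{1}{|\I_k|}\lV\bm{A}_k\bm{x}\rV_2^2$, not just its deviation from $\lV\bm{x}\rV_2^2$. So even with $\delta'=0$ you would only conclude $\tfrac12\lV\bm{x}\rV_2^2\le\frac{1}{\beta m}\lV\bm{A}_k\bm{x}\rV_2^2\le\tfrac32\lV\bm{x}\rV_2^2$, which is useless when the target $\delta$ is small; shrinking $\delta'$ cannot repair this. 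The fix is to make the Chernoff window $\delta$-dependent: take $|\I_k|\in[(1-\epsilon)\beta m,(1+\epsilon)\beta m]$ with, say, $\epsilon=\delta/3$, at cost $2e^{-c\epsilon^2\beta m}$, and then choose $\delta'$ so that $(1+\epsilon)(1+\delta')\le 1+\delta$ and $(1-\epsilon)(1-\delta')\ge 1-\delta$. Since $\delta$ is fixed, the $\epsilon$-dependence is absorbed into $c_1,c_2$, and after this correction your argument goes through.
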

\begin{proof}
We first prove the case of $K=1$. Since $\I_1$ is Bernoulli sampled from $[m]$ with probability $\beta$, its characteristic vector $\bm{\xi}=[\xi_1,\xi_2,\cdots,\xi_m]^T$ is a random vector whose entries are independent and satisfy
\begin{equation}  \label{def:delta}
 \xi_i=
 \begin{cases}
 1,\quad &\textnormal{with probability}\ \beta; \cr
 0,      &\textnormal{otherwise},
 \end{cases}
\quad i=1,2,\cdots,m.
 \end{equation}
It suffices to show that: for any $\delta\in(0,1)$, if $m\ge c_1 \beta^{-2}r\log\left(n/r\right)$, then
\begin{equation}\label{eqsrip}
\mathsf{P}\left( \lv \frac{1}{\beta m}\sum_{i=1}^m\xi_i \lv\bm{a}_i^T\bm{x}\rv^2-\lV\bm{x}\rV_2^2\rv>\delta\lV\bm{x}\rV_2^2,~~\forall~\bm{x}:\|\bm{x}\|_0\leq r\right)\le 2e^{-c_2\beta^2m},
\end{equation}
where $\xi_1,\xi_2,\cdots,\xi_m$ are independent random Bernoulli variables. To this end, we use the same argument as in the proof of RIP for random Gaussian matrices.

Let $\bm{x}\in\mathbb{R}^n$ be a fixed vector. Obviously, we have \begin{equation*}
\frac{1}{\beta m}\lV\bm{A}_{1}\bm{x}\rV^2_2=\frac{1}{\beta m}\sum_{i=1}^m\xi_i \lv\bm{a}_i^T\bm{x}\rv^2.
\end{equation*}
Since the random vector $\bm{\xi}$ and the random matrix $\bm{A}$ are independent, taking full expectation leads to
\begin{align*}
\mathsf{E}\left[\frac{1}{\beta m}\lV\bm{A}_{1}\bm{x}\rV^2\right]
=\mathsf{E}\left[\frac{1}{\beta m}\sum_{i=1}^m\xi_i \lv\bm{a}_i^T\bm{x}\rv^2\right]
&=\mathsf{E}_{\bm{A}}\mathsf{E}_{\bm{\xi}}\left[\frac{1}{\beta m}\sum_{i=1}^m\xi_i \lv\bm{a}_i^T\bm{x}\rv^2\right]\\
&=\mathsf{E}_{\bm{A}}\left[\frac{1}{m}\sum_{i=1}^m \lv\bm{a}_i^T\bm{x}\rv^2\right]
=\lV\bm{x}\rV_2^2.
\end{align*}
Denote
\begin{equation*}
  v_i=\frac{\xi_i}{\beta}\lv\bm{a}_i^T\bm{x}\rv^2-\lV\bm{x}\rV_2^2,\quad i\in[m].
\end{equation*}
Then $v_1, v_2,\cdots, v_m$ are independent and $\mathsf{E}[v_i]=0$ for all $i\in [m]$. Moreover, for all $i\in [m]$, since $\xi_i$ is bounded and $\bm{a}_i^T\bm{x}$ is Gaussian, one can show that $v_i$ is subexponential. Indeed, since $\bm{a}_{i}\sim \N(0,\bm{I})$, $\bm{a}_i^T\bm{x}$ is a Gaussian random variable with mean zero and variance $\|\bm{x}\|_2^2$, which implies that
$$\mathsf{P}\left( \lv\bm{a}_i^T\bm{x}\rv\ge \sqrt{\beta(1+\varepsilon)} \lV\bm{x}\rV_2\right)\le 2e^{-\frac{\beta(1+\varepsilon)}{2}},\quad\forall~ \varepsilon\geq 0.$$
We then have
\begin{align*}
&~\mathsf{P}\left(\lv v_i\rv \ge \varepsilon \lV\bm{x}\rV_2^2 \right)\\
=&~\mathsf{P}\left( \frac{\xi_i}{\beta}\lv\bm{a}_i^T\bm{x}\rv^2-\lV\bm{x}\rV_2^2\ge \varepsilon \lV\bm{x}\rV_2^2\right)+\mathsf{P}\left(\frac{\xi_i}{\beta}\lv\bm{a}_i^T\bm{x}\rv^2-\lV\bm{x}\rV_2^2 \le -\varepsilon \lV\bm{x}\rV_2^2\right)\\
\le&~ \beta\cdot\mathsf{P}\left( \lv\bm{a}_i^T\bm{x}\rv\ge \sqrt{\beta(\varepsilon+1)} \lV\bm{x}\rV_2\right)+\mathsf{P}\left(-\lV\bm{x}\rV_2^2 \le -\varepsilon \lV\bm{x}\rV_2^2\right)\\
\le &~2\beta e^{-\frac{\beta(1+\varepsilon)}{2}}+e^{\frac{\beta\left(1-\varepsilon\right)}{2}}
=\left(2\beta e^{-\frac{\beta}{2}}+e^{\frac{\beta}{2}}\right)e^{-\frac{\beta\varepsilon}{2}},
\end{align*}
where
the second inequality follows from
$$
\mathsf{P}\left(-\lV\bm{x}\rV_2^2 \le -\varepsilon \lV\bm{x}\rV_2^2\right)=
\begin{cases}
1, & \mbox{if }\varepsilon\le 1,\cr
0, & \mbox{if }\varepsilon> 1,\cr
\end{cases}
\leq e^{\frac{\beta\left(1-\varepsilon\right)}{2}}.
$$
Therefore, for any $u\ge 0$,
$$
\mathsf{P}\left(\lv v_i\rv \ge u\right)\le c_3e^{-c_4 u},\quad \text{with}~c_3=2\beta e^{-\frac{\beta}{2}}+e^{\frac{\beta}{2}},~c_4=\frac{\beta}{2\lV\bm{x}\rV_2^2},
$$
which tells that $u_i$ is subexponential. By applying the Bernstein's inequality (see also \cref{Bernstein}), it yields that, for any $\varepsilon\in(0,1)$,
\begin{align*}
&~\mathsf{P}\left(\frac{1}{m}\lv\sum_{i=1}^m v_i\rv \ge \varepsilon\lV\bm{x}\rV_2^2\right)
=\mathsf{P}\left(\lv\sum_{i=1}^m v_i\rv \ge m\varepsilon\lV\bm{x}\rV_2^2\right)\\
\le&~ 2\cdot\exp\left( -\frac{(c_4m\varepsilon \lV\bm{x}\rV_2^2)^2/2}{2c_3 m+c_4 m\varepsilon \lV\bm{x}\rV_2^2}\right)\\
=&~2\cdot\exp\left( \frac{-m \beta^2\varepsilon^2}{4(8\beta e^{-\frac{\beta}{2}}+4e^{\frac{\beta}{2}}+\beta\varepsilon)}\right)\le 2\cdot\exp\left( \frac{-m \beta^2\varepsilon^2}{36+16\sqrt{e}}\right),
\end{align*}
where the last inequality comes from $\beta e^{-\frac{\beta}{2}}\leq 1$, $e^{\frac{\beta}{2}}\leq\sqrt{e}$, and $\beta\varepsilon\leq 1$. Letting $c_0=\frac{1}{36+16\sqrt{e}}\approx 0.016$, we obtain, for all $\bm{x}\in\mathbb{R}^n$ and $\varepsilon\in(0,1)$,
\begin{align}\label{concentration}
\mathsf{P}\left( \lv \frac{1}{\beta m}\lV\bm{A}_{1}\bm{x}\rV_2^2-\lV\bm{x}\rV_2^2\rv>\varepsilon\lV\bm{x}\rV_2^2\right)\le 2e^{-c_0\varepsilon^2 \beta^2m}.
\end{align}

With \eqref{concentration}, we may follow the standard covering argument (see, e.g., \cite[Theorem 5.2]{Baraniuk2008A}) to prove the lemma with $K=1$. To make the paper self-contained, we provide the argument briefly.
Firstly, let $\S$ be any fixed subset $\S\subseteq [n]$ with $|\S|= r$, and define the subspace $\mathbb{B}_{\S}=\{\bm{x}\in \mathbb{R}^n: \mathrm{support}(\bm{x})\subseteq \S\}$. Then, by \cref{lemma:concentration} and \eqref{concentration}, we know for any $\tilde{\delta}\in (0,\frac{1}{3})$,  the inequality
\begin{align*}
(1-\tilde{\delta})\lV\bm{x}\rV_2\le \frac{1}{\sqrt{\beta m}}\lV\bm{A}_{1}\bm{x}\rV_2
\le (1+\tilde{\delta})\lV\bm{x}\rV_2,\quad~\forall~\bm{x}:\bm{x}\in \mathbb{B}_{\S}
\end{align*}
fail to hold with probability at most $2(12/\tilde{\delta})^r e^{-c_0(\tilde{\delta}/2)^2\beta^2 m}$. Since there are $n\choose r$ possible such subspaces (in form of $\mathbb{B}_{\S}$), the fail probability of the inequality
\begin{align}\label{ineq:ripJ1s}
(1-\tilde{\delta})\lV\bm{x}\rV_2\le \frac{1}{\sqrt{\beta m}}\lV\bm{A}_{1}\bm{x}\rV_2\le (1+\tilde{\delta})\lV\bm{x}\rV_2,\quad~\forall~\bm{x}:\lV\bm{x}\rV_0\le r
\end{align}
is at most
\begin{align*}
2{n\choose r}(12/\tilde{\delta})^r e^{-c_0(\tilde{\delta}/2)^2\beta^2 m}
&\le 2(en/r)^r(12/\tilde{\delta})^r e^{-c_0(\tilde{\delta}/2)^2\beta^2m}\\
&=2e^{-c_0(\tilde{\delta}/2)^2\beta^2 m+r\big(\log(en/r)+\log(12/\tilde{\delta})\big)},
\end{align*}
where the inequality follows from ${n\choose r}\le (en/r)^r$. By letting $c_1:=\frac{8\big(2+\log(12/\tilde{\delta})\big)}{c_0\tilde{\delta}^2}$, we have
$$
c_0\tilde{\delta}^2/4-c_1^{-1}\big(1+\frac{1+\log(12/\tilde{\delta})}{\log (n/r)}\big)\geq c_0\tilde{\delta}^2/8:=c_2.
$$
Therefore, whenever $r\le c_1^{-1} \beta^{2}m /\log\left(n/r\right)$, it holds that
$$
-c_0(\tilde{\delta}/2)^2\beta^2 m+r\big(\log(en/r)+\log(12/\tilde{\delta})\big)\leq -c_2 \beta^2m.
$$
This implies that, if provided $m\ge c_1 \beta^{-2}r\log\left(n/r\right)$, then the fail probability of \eqref{ineq:ripJ1s} is at most $2e^{-c_2 \beta^2m}$. Now, we set $\delta=3\tilde{\delta}$. Since $(1-\tilde{\delta})^2\ge 1-3\tilde{\delta}$ and $(1+\tilde{\delta})^2\le 1+3\tilde{\delta}$ for any $\tilde{\delta}\in (0,\frac{1}{3})$, \eqref{ineq:ripJ1s} implies
\begin{align*}
\left(1-\delta\right)\lV\bm{x}\rV_2^2\le \frac{1}{\beta m}\lV\bm{A}_{1}\bm{x}\rV_2^2\le \left(1+\delta\right)\lV\bm{x}\rV_2^2,\quad\forall~\bm{x}: \lV\bm{x}\rV_0\le r,
\end{align*}
for any $\delta\in (0,1)$. This proves \eqref{eqsrip} (i.e., the lemma for $K=1$).

Finally, we prove the lemma for a general $K$ by simply considering the union bound.  More explicitly, for any fixed $k\in \{1,\ldots,K\}$, the result of the case $K=1$ (i.e., \eqref{eqsrip}) implies that the fail probability of
\begin{equation}\label{eq:event1}
\left(1-\delta\right)\lV\bm{x}\rV_2^2\le \frac{1}{\beta m}\lV\bm{A}_{k}\bm{x}\rV^2_2\le \left(1+\delta\right)\lV\bm{x}\rV_2^2,\quad\forall~\bm{x}:\lV\bm{x}\rV_0\le r
\end{equation}
is at most $2e^{-c_2 \beta^2m}$. Thus, the fail probability of the event \eqref{eq:event1} for all $k\in\{1,\ldots,K\}$ would not exceed $2Ke^{-c_2 \beta^2m}$.
\end{proof}


The following probabilistic lemma is also crucial for the proof of our main theorem in bounding the term $\lV \bm{y}_{k+1}-\bm{A}_{k+1}\bm{x}^{\natural}\rV_2$.

\begin{lemma}[A corollary of {\cite[Lemma 25]{soltanolkotabi2019structured}}]\label{bound:Ax}
Assume the sampling vectors $\left\{\bm{a}_i\right\}_{i=1}^{m}$ are i.i.d. Gaussian random vectors distributed as $\N(\bm{0},\bm{I})$. Assume $\bm{x}^{\sharp}$ is an $s$-sparse vector. There exist universal positive constants $c_5, c_6$ such that: as long as the sample size $m$ satisfies
$$m\geq c_5 s\log\left(n/s\right),$$
then with probability at least $1-e^{-c_6 m}$, it holds that
\begin{align}\label{event:boundAx}
\frac{1}{m}\mathop{\sum}\limits_{i=1}^{m}\lv \bm{a}_i^T\bm{x}^{\natural}\rv^2\cdot \bm{1}_{\left\{\left(\bm{a}_i^T\bm{x}\right)\left(\bm{a}_i^T\bm{x}^{\natural}\right)\le 0\right\}}
&\le\frac{1}{\left(1-\lambda\right)^2}\left(10^{-3}+\lambda \sqrt{\frac{21}{20}}\right)^2\lV\bm{x}-\bm{x}^{\natural}\rV_2^2,\cr
\quad
&\forall~\bm{x}~:~\|\bm{x}\|_0\leq s\mbox{~and~}\mathrm{dist}\left(\bm{x},\bm{x}^{\natural}\right)\le \lambda \lV\bm{x}^{\natural}\rV_2.
\end{align}
\end{lemma}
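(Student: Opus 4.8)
The plan is to read the estimate off \cite[Lemma~25]{soltanolkotabi2019structured}, which gives a uniform concentration bound for quadratic forms $\frac{1}{m}\sum_i(\bm{a}_i^T\bm{u})(\bm{a}_i^T\bm{w})\bm{1}_{\{\cdot\}}$ over $s$-sparse $\bm{u},\bm{w}$, and to specialize it by inserting the constraint $\mathrm{dist}(\bm{x},\bm{x}^{\natural})\le\lambda\lV\bm{x}^{\natural}\rV_2$; below I also sketch a self-contained route to the same bound. We may assume $\langle\bm{x},\bm{x}^{\natural}\rangle\ge0$, i.e.\ $\mathrm{dist}(\bm{x},\bm{x}^{\natural})=\lV\bm{x}-\bm{x}^{\natural}\rV_2$, which is the branch relevant to \cref{alg:SAM} (otherwise run the argument with $-\bm{x}^{\natural}$ in place of $\bm{x}^{\natural}$). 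The key deterministic fact is: on the event $E_i:=\{(\bm{a}_i^T\bm{x})(\bm{a}_i^T\bm{x}^{\natural})\le0\}$ the scalars $\bm{a}_i^T\bm{x}$ and $\bm{a}_i^T\bm{x}^{\natural}$ have opposite signs, so $\lv\bm{a}_i^T(\bm{x}-\bm{x}^{\natural})\rv=\lv\bm{a}_i^T\bm{x}\rv+\lv\bm{a}_i^T\bm{x}^{\natural}\rv\ge\lv\bm{a}_i^T\bm{x}^{\natural}\rv$, whence
$$\lv\bm{a}_i^T\bm{x}^{\natural}\rv^2\,\bm{1}_{E_i}\le\lv\bm{a}_i^T(\bm{x}-\bm{x}^{\natural})\rv^2\,\bm{1}_{E_i}.$$
This exhibits the natural scale $\lV\bm{x}-\bm{x}^{\natural}\rV_2$ and shows that each summand is a scaled squared Gaussian restricted to an event, hence sub-exponential.

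For a \emph{fixed} $\bm{x}$, set $Z_i:=\lv\bm{a}_i^T\bm{x}^{\natural}\rv^2\bm{1}_{E_i}$. A two-dimensional Gaussian integral gives $\mathsf{E}Z_1=\lV\bm{x}^{\natural}\rV_2^2\,\psi(\theta)$, where $\theta$ is the angle between $\bm{x}$ and $\bm{x}^{\natural}$ and $\psi(\theta)=2\int_0^\infty t^2\varphi(t)\Phi(-t\cot\theta)\,dt$ with $\varphi,\Phi$ the standard normal density and cdf; an elementary bound on this integral, together with the geometric inequality $\sin\theta\le\lV\bm{x}-\bm{x}^{\natural}\rV_2/\lV\bm{x}^{\natural}\rV_2\le\lambda$, yields $\mathsf{E}Z_1\le c_{\lambda}\lV\bm{x}-\bm{x}^{\natural}\rV_2^2$ with $c_\lambda=O(\lambda)$. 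Bernstein's inequality (\cref{Bernstein}), applied with the sub-exponential control above, then gives $\frac{1}{m}\sum_i Z_i\le\mathsf{E}Z_1+\varepsilon\lV\bm{x}-\bm{x}^{\natural}\rV_2^2$ with probability at least $1-e^{-cm\min(\varepsilon,\varepsilon^2)}$, for any slack $\varepsilon>0$.

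The remaining task --- and the main obstacle --- is to make this uniform over all $s$-sparse $\bm{x}$ with $\mathrm{dist}(\bm{x},\bm{x}^{\natural})\le\lambda\lV\bm{x}^{\natural}\rV_2$, even though $\bm{x}\mapsto\bm{1}_{E_i}$ is discontinuous, so that a plain net argument does not transfer the estimate from net points to nearby points. I would use the usual robustification: pass to the scaling-monotone event $\widetilde E_i:=\{\lv\bm{a}_i^T\bm{x}^{\natural}\rv\le\lv\bm{a}_i^T(\bm{x}-\bm{x}^{\natural})\rv\}\supseteq E_i$, and split the summand at a threshold $\tau>0$: the part with $\lv\bm{a}_i^T\bm{x}^{\natural}\rv\le\tau\lV\bm{x}-\bm{x}^{\natural}\rV_2$ contributes at most $\tau^2\lV\bm{x}-\bm{x}^{\natural}\rV_2^2$ after averaging (this is where the additive $10^{-3}$ comes from), while on the complementary part the pointwise inequality forces $\lv\bm{a}_i^T\widehat{\bm{h}}\rv\ge\tau$ with $\widehat{\bm{h}}:=(\bm{x}-\bm{x}^{\natural})/\lV\bm{x}-\bm{x}^{\natural}\rV_2$, so it is dominated by $\lV\bm{x}-\bm{x}^{\natural}\rV_2^2\cdot\lv\bm{a}_i^T\widehat{\bm{h}}\rv^2\bm{1}_{\{\lv\bm{a}_i^T\widehat{\bm{h}}\rv\ge\tau\}}$, a family that is now Lipschitz in $\widehat{\bm{h}}$ up to a controlled modulus and whose expectation retains the $O(\lambda)$ bound of the angle computation. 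One then takes a union bound over the $\binom{n}{2s}\le(en/2s)^{2s}$ choices of the joint support of $\bm{x}$ and $\bm{x}^{\natural}$ and, inside each fixed $2s$-dimensional coordinate subspace, over a fine net of the relevant ball; the resulting $\exp(O(s\log(n/s)))$ test points are swallowed by the exponent $cm$ once $m\ge c_5 s\log(n/s)$, exactly as in the covering argument in the proof of \cref{sRIP}. Collecting the deterministic slack $\tau^2$, the $O(\lambda)$ expectation term, the Bernstein slack, the net error, and the factor $\lV\bm{x}^{\natural}\rV_2\le(1-\lambda)^{-1}\lV\bm{x}\rV_2$ used where a factor $\lV\bm{x}\rV_2$ appears, produces a bound of the stated form with probability at least $1-e^{-c_6 m}$; the particular constants $10^{-3}$ and $\sqrt{21/20}$ then result from fixing $\tau$, the net fineness, and the Bernstein slack.
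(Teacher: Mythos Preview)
Your proposal is correct and matches the paper's own proof, which consists entirely of observing that the left-hand side of \eqref{event:boundAx} coincides with the expression bounded in \cite[Eq.~(VIII.45) and Lemma~25]{soltanolkotabi2019structured} upon taking $\varepsilon_0=10^{-3}$. Your additional self-contained sketch (threshold splitting, robustified indicator, Bernstein plus covering) is not part of the paper's argument but is a faithful outline of the technique behind the cited lemma.
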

\begin{proof}
In fact, the left hand side of the inequality~\eqref{event:boundAx} is same to the second line of \cite[Eq. (VIII.45)]{soltanolkotabi2019structured}, and the upper bound of the term has been given by \cite[Lemma 25]{soltanolkotabi2019structured} with $\varepsilon_0 = 10^{-3}$.
\end{proof}

With the two probabilistic lemmas above, we can show the following deterministic lemmas under the success of the events \eqref{srip:union} and \eqref{event:boundAx}.

\begin{lemma}\label{bound:Ax-y}
Let the sequences $\{ \bm{y}_k, \bm{A}_{k},  \bm{x}_{k}\}_{k\ge 1}$ be generated by \cref{alg:SAM}. Assume the event~\eqref{event:boundAx} holds true for some $\lambda\in [0,\frac{1}{8}]$. Then, if $\lV \bm{x}_{k}-\bm{x}^{\natural}\rV_2\le \lambda \lV \bm{x}^{\natural}\rV_2$, we have
\begin{align}\label{bound:sign}
\lV \bm{y}_{k+1}-\bm{A}_{k+1}\bm{x}^{\natural}\rV_2
\le C_{\lambda}\sqrt{m}\lV \bm{x}_{k}-\bm{x}^{\natural}\rV_2,
\end{align}
where $C_{\lambda}=\frac{2}{(1-\lambda)}\left(10^{-3}+\lambda \sqrt{\frac{21}{20}}\right)$.
\end{lemma}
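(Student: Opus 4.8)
The plan is to unwind the definition of $\bm{y}_{k+1}$ and recognize the left-hand side of \eqref{bound:sign} as exactly the quantity controlled by \cref{bound:Ax}. Recall that in \cref{alg:SAM} we have $\bm{y}_{k+1}=\mathrm{sgn}(\bm{A}_{k+1}\bm{x}_k)\odot \bm{y}_{\I_{k+1}}$, and since $\bm{y}=|\bm{A}\bm{x}^{\natural}|$ we get $\bm{y}_{\I_{k+1}}=|\bm{A}_{k+1}\bm{x}^{\natural}|$. Hence the $i$-th entry of $\bm{y}_{k+1}-\bm{A}_{k+1}\bm{x}^{\natural}$ is $\mathrm{sgn}(\bm{a}_i^T\bm{x}_k)\,|\bm{a}_i^T\bm{x}^{\natural}| - \bm{a}_i^T\bm{x}^{\natural}$ for $i\in\I_{k+1}$. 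The first step is the elementary observation that this entry is zero whenever $\mathrm{sgn}(\bm{a}_i^T\bm{x}_k)=\mathrm{sgn}(\bm{a}_i^T\bm{x}^{\natural})$ (i.e. when $(\bm{a}_i^T\bm{x}_k)(\bm{a}_i^T\bm{x}^{\natural})\ge 0$), and equals $-2\bm{a}_i^T\bm{x}^{\natural}$ (up to sign) when the signs disagree, so in all cases $|\mathrm{sgn}(\bm{a}_i^T\bm{x}_k)|\bm{a}_i^T\bm{x}^{\natural}| - \bm{a}_i^T\bm{x}^{\natural}|^2 \le 4\,|\bm{a}_i^T\bm{x}^{\natural}|^2\,\bm{1}_{\{(\bm{a}_i^T\bm{x}_k)(\bm{a}_i^T\bm{x}^{\natural})\le 0\}}$.

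Next I would sum over $i\in\I_{k+1}\subseteq[m]$, bounding by the full sum over $[m]$ (dropping the restriction to $\I_{k+1}$ only increases the sum since all terms are nonnegative):
\begin{align*}
\lV \bm{y}_{k+1}-\bm{A}_{k+1}\bm{x}^{\natural}\rV_2^2 \le 4\sum_{i=1}^m |\bm{a}_i^T\bm{x}^{\natural}|^2\,\bm{1}_{\{(\bm{a}_i^T\bm{x}_k)(\bm{a}_i^T\bm{x}^{\natural})\le 0\}}.
\end{align*}
Then I apply \cref{bound:Ax} with $\bm{x}=\bm{x}_k$ and the given $\lambda\in[0,\tfrac18]$: the hypothesis $\|\bm{x}^{\natural}\|_0\le s$ holds, $\bm{x}_k$ is $s$-sparse (it is the output of $L$ HTP steps, hence has support size at most $s$), and the assumption $\lV\bm{x}_k-\bm{x}^{\natural}\rV_2\le\lambda\lV\bm{x}^{\natural}\rV_2$ places $\bm{x}_k$ in the admissible set — here I would also note that $\mathrm{dist}(\bm{x}_k,\bm{x}^{\natural})\le\lV\bm{x}_k-\bm{x}^{\natural}\rV_2$ so the distance condition in \eqref{event:boundAx} is met. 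This yields
\begin{align*}
\lV \bm{y}_{k+1}-\bm{A}_{k+1}\bm{x}^{\natural}\rV_2^2 \le \frac{4m}{(1-\lambda)^2}\left(10^{-3}+\lambda\sqrt{\tfrac{21}{20}}\right)^2 \lV\bm{x}_k-\bm{x}^{\natural}\rV_2^2,
\end{align*}
and taking square roots gives exactly \eqref{bound:sign} with $C_\lambda = \frac{2}{1-\lambda}\left(10^{-3}+\lambda\sqrt{\tfrac{21}{20}}\right)$.

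There is essentially no serious obstacle here — the lemma is a deterministic bookkeeping step that repackages \cref{bound:Ax}. The one point requiring a little care is the entrywise case analysis establishing the factor $4$ (from the $-2\bm{a}_i^T\bm{x}^{\natural}$ in the sign-disagreement case) and making sure the indicator event matches precisely the one in \eqref{event:boundAx}; a second minor point is justifying that restricting the index set from $[m]$ to $\I_{k+1}$ can only shrink the nonnegative sum, so that the constant $C_\lambda$ (defined with the $m$ from the full sum, not $\beta m$) is the correct one. Everything else is routine.
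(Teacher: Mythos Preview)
Your proposal is correct and follows essentially the same route as the paper's proof: expand $\bm{y}_{k+1}-\bm{A}_{k+1}\bm{x}^{\natural}$ entrywise, observe that each squared entry is bounded by $4|\bm{a}_i^T\bm{x}^{\natural}|^2\,\bm{1}_{\{(\bm{a}_i^T\bm{x}_k)(\bm{a}_i^T\bm{x}^{\natural})\le 0\}}$, enlarge the sum from $\I_{k+1}$ to $[m]$, and apply \eqref{event:boundAx}. Your explicit checks that $\bm{x}_k$ is $s$-sparse and that $\mathrm{dist}(\bm{x}_k,\bm{x}^{\natural})\le\lV\bm{x}_k-\bm{x}^{\natural}\rV_2$ are in fact slightly more careful than the paper's version.
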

\begin{proof}
Recall that $\bm{y}_{k+1}=\mathrm{sgn}(\bm{A}_{k+1}\bm{x}_{k})\odot \bm{y}_{\I_{k+1}}$. We thus have
\begin{align*}
\frac{1}{m}\lV \bm{y}_{k+1}-\bm{A}_{k+1}\bm{x}^{\natural}\rV_2^2 &=\frac{1}{ m} \mathop{\sum}\limits_{i\in \I_{k+1}}\Big(\lv\bm{a}_i^T\bm{x}^{\natural}\rv\cdot\mathrm{sgn}\left(\bm{a}_i^T\bm{x}_{k}\right)-\left(\bm{a}_i^T\bm{x}^{\natural}\right)\Big)^2\\
&\le\frac{1}{ m}\mathop{\sum}\limits_{i=1}^{m}\Big(\lv\bm{a}_i^T\bm{x}^{\natural}\rv\cdot\mathrm{sgn}\left(\bm{a}_i^T\bm{x}_{k}\right)-\left(\bm{a}_i^T\bm{x}^{\natural}\right)\Big)^2\\
&=\frac{1}{ m}\mathop{\sum}\limits_{i=1}^{m}\Big(\mathrm{sgn}\left(\bm{a}_i^T\bm{x}_{k}\right)-\mathrm{sgn}\left(\bm{a}_i^T\bm{x}^{\natural}\right)\Big)^2
\lv\bm{a}_i^T\bm{x}^{\natural}\rv^2 \\
&\le  \frac{4}{ m}\mathop{\sum}\limits_{i=1}^m \lv \bm{a}_i^T\bm{x}^{\natural}\rv^2 \cdot \bm{1}_{\left\{\left(\bm{a}_i^T\bm{x}_{k}\right)\left(\bm{a}_i^T\bm{x}^{\natural}\right)\le 0\right\}}\\
&\le \frac{4}{(1-\lambda)^2}\left(10^{-3}+\lambda \sqrt{\frac{21}{20}}\right)^2\lV \bm{x}_{k}-\bm{x}^{\natural}\rV_2^2,
\end{align*}
where the last inequality follows from \eqref{event:boundAx}. We conclude the proof by letting $C_{\lambda}=\frac{2}{(1-\lambda)}\left(10^{-3}+\lambda \sqrt{\frac{21}{20}}\right)$.
\end{proof}

In the following \cref{convergence:exact}, we consider the case when subproblem~$\eqref{subproblem:sto}$ is solved by HTP, in view of results from compressed sensing problem with noisy data. By \cref{sRIP}, one fact we shall notice is that if $m$ is $O\left(s\log(n/s)\right)$, then $|\I_k|$ should also be $O\left(s\log(n/s)\right)$ to ensure the RIP condition. Therefore, $\beta$ can not approach $0$ and a lower bound of $\beta$ is essential in practice. Without loss of generality, we consider $\beta\in [\frac{1}{10},1]$.

\begin{lemma}\label{convergence:exact}
Let the sequences $\{ \bm{y}_k, \bm{A}_{k},  \bm{x}_{k}\}_{k\ge 1}$ be generated by \cref{alg:SAM} with $L\geq 1$. Let $K$ be a given positive integer. Assume the simultaneous RIP \eqref{srip:union} holds true for $K$ iterations with $r=2s$ and $\delta=0.1$, and the event \eqref{event:boundAx} holds true for some $\lambda\in [0,{\sqrt{\beta}}/{8}]$ with $\beta\in [\frac{1}{10},1]$.
Then, there exists a universal constant $\alpha_0\in (0,1)$ such that: whenever $\lV \bm{x}_{k}-\bm{x}^{\natural}\rV_2\le \lambda \lV \bm{x}^{\natural}\rV_2$ for some  and some $k\leq K-1$, we have
\begin{align*}
\lV \bm{x}_{k+1}-\bm{x}^{\natural}\rV_2
\le\alpha_0\lV \bm{x}_{k}-\bm{x}^{\natural}\rV_2.
\end{align*}
\end{lemma}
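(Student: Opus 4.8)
The plan is to analyze one full iteration of SAM, namely the map $\bm{x}_k \mapsto \bm{x}_{k+1}$, as a compressed-sensing recovery problem with noisy right-hand side $\bm{y}_{k+1} = \bm{A}_{k+1}\bm{x}^{\natural} + \bm{e}_k$, where $\bm{e}_k = \bm{y}_{k+1} - \bm{A}_{k+1}\bm{x}^{\natural}$. Under the hypotheses we have two things in hand: the simultaneous RIP \eqref{srip:union} with $r=2s$, $\delta=0.1$ (so $\bm{A}_{k+1}/\sqrt{\beta m}$ has $\delta_{2s}\le 0.1$, and in fact $\delta_{3s}$ can be controlled similarly if we enlarge $r$ to $3s$ in the hypothesis — I would check whether the paper wants $r=2s$ or $r=3s$ here and use whichever makes the HTP contraction clean), and the bound $\lV \bm{e}_k\rV_2 \le C_\lambda \sqrt{m}\,\lV \bm{x}_k - \bm{x}^{\natural}\rV_2$ from \cref{bound:Ax-y}, valid because $\lV \bm{x}_k - \bm{x}^{\natural}\rV_2 \le \lambda \lV \bm{x}^{\natural}\rV_2$ with $\lambda \le \sqrt{\beta}/8 \le 1/8$.

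The core step is to invoke the standard one-step (or $L$-step) contraction estimate for hard thresholding pursuit under RIP. For IHT/HTP-type iterations with a matrix satisfying $\delta_{3s}$ small enough (e.g. $\delta_{3s}\le 1/3$ as quoted from \cite{bouchot2016hard}, or the weaker constants that suffice for a single step), one has an inequality of the shape
\begin{equation*}
\lV \bm{x}_{k,\ell} - \bm{x}^{\natural}\rV_2 \le \rho\,\lV \bm{x}_{k,\ell-1} - \bm{x}^{\natural}\rV_2 + \tau\,\frac{1}{\sqrt{\beta m}}\lV \bm{e}_k\rV_2,
\end{equation*}
with $\rho\in(0,1)$ and $\tau$ a universal constant depending only on the RIP constant. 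Iterating this $L$ times from $\bm{x}_{k,0}=\bm{x}_k$ gives $\lV \bm{x}_{k+1} - \bm{x}^{\natural}\rV_2 \le \rho^L \lV \bm{x}_k - \bm{x}^{\natural}\rV_2 + \frac{\tau}{1-\rho}\cdot\frac{1}{\sqrt{\beta m}}\lV \bm{e}_k\rV_2$. Now substitute the noise bound: $\frac{1}{\sqrt{\beta m}}\lV \bm{e}_k\rV_2 \le \frac{C_\lambda}{\sqrt{\beta}}\lV \bm{x}_k - \bm{x}^{\natural}\rV_2$. Since $\lambda \le \sqrt{\beta}/8$, we have $C_\lambda = \frac{2}{1-\lambda}(10^{-3} + \lambda\sqrt{21/20}) \le \frac{2}{1-1/8}(10^{-3} + \frac{\sqrt{\beta}}{8}\sqrt{21/20})$, so $C_\lambda/\sqrt{\beta} \le \frac{16}{7}\big(\frac{10^{-3}}{\sqrt{\beta}} + \frac{1}{8}\sqrt{21/20}\big)$; using $\beta\ge 1/10$ this is bounded by an explicit universal constant strictly less than... — here is where one must check the arithmetic: we need $\rho^L + \frac{\tau}{1-\rho}\cdot\frac{C_\lambda}{\sqrt{\beta}} < 1$. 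Since $\rho^L \le \rho < 1$ for all $L\ge 1$, it suffices that the noise term $\frac{\tau}{1-\rho}\cdot\frac{C_\lambda}{\sqrt\beta}$ be smaller than $1-\rho$; because $10^{-3}$ is tiny and the $\lambda\sqrt{21/20}$ contribution is at most $\frac{\sqrt\beta}{8}\sqrt{21/20}$, dividing by $\sqrt\beta$ gives a contribution $\le \frac18\sqrt{21/20}\approx 0.128$ from that piece plus $\le \frac{10^{-3}}{\sqrt{1/10}}\approx 0.0032$ from the other, so $C_\lambda/\sqrt\beta$ is around $0.3$, and one needs the universal RIP constant $\tau/(1-\rho)$ (with $\delta_{3s}$ chosen small, e.g. by taking $\delta$ smaller than $0.1$ in the RIP hypothesis if necessary) to keep $\frac{\tau}{1-\rho}\cdot 0.3 < 1-\rho$. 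Setting $\alpha_0 := \rho + \frac{\tau}{1-\rho}\cdot\frac{C_\lambda}{\sqrt\beta} \in (0,1)$ finishes the lemma.

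The main obstacle is the bookkeeping of constants: one must ensure that the RIP order and constant assumed in the hypothesis (here $r=2s$, $\delta=0.1$) are genuinely enough to make the HTP one-step operator a contraction with explicit $\rho$ and $\tau$, and that the resulting $\alpha_0$ stays below $1$ uniformly over $\beta\in[1/10,1]$ and $\lambda\in[0,\sqrt\beta/8]$ and over all $L\ge 1$. A secondary subtlety is the support step of HTP: one needs that the true support $\mathrm{supp}(\bm{x}^{\natural})$ and the iterate supports jointly lie in a set of size at most $3s$, so that the RIP of order $3s$ (not just $2s$) is the relevant quantity — I would either state the RIP hypothesis with $r=3s$ or note that $\delta_{3s}\le 3\delta_{s}$-type bounds let $r=2s$ suffice. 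Everything else — the geometric sum over the $L$ inner iterations, the substitution of \cref{bound:Ax-y}, and the final definition of $\alpha_0$ — is routine. I would present it in that order: (i) rewrite $\bm{y}_{k+1}$ as a noisy CS measurement and record $\lV\bm{e}_k\rV_2$ via \cref{bound:Ax-y}; (ii) state and apply the HTP one-step RIP contraction; (iii) iterate $L$ times; (iv) bound the noise coefficient using $\lambda\le\sqrt\beta/8$ and $\beta\ge 1/10$; (v) collect into $\alpha_0<1$.
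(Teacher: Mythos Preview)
Your proposal is correct and follows essentially the same route as the paper: bound the residual $\bm{e}_k$ via \cref{bound:Ax-y}, invoke the HTP contraction estimate under RIP (the paper cites \cite[Theorem~3.8]{foucart2011hard}, giving $\rho=\sqrt{2\delta^2/(1-\delta^2)}$ and $\tau=(\sqrt{2(1-\delta)}+\sqrt{1+\delta})/(1-\delta)$), iterate $L$ times from $\bm{x}_{k,0}=\bm{x}_k$, substitute $C_\lambda/\sqrt{\beta}$, and check numerically that $\alpha_0\le 0.95$ for $L=1$ and $\alpha_0\le 0.7$ for $L\ge 2$. Your suspicion about the RIP order is well founded: the paper's own proof actually applies \eqref{srip:union} with $r=3s$, not $r=2s$ as written in the lemma statement, so you should simply take the hypothesis as $r=3s$ rather than try to pass from $2s$ to $3s$.
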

\begin{proof}
Let $k$ be an integer such that $k\leq K$. Define the residual vector $\bm{e}_{k}:=\frac{1}{\sqrt{\beta m}}(\bm{y}_{k+1}-\bm{A}_{k+1}\bm{x}^{\natural})$. Because of  \eqref{event:boundAx}, \cref{bound:Ax-y} implies
\begin{align}\label{eq:estek}
\lV\bm{e}_{k}\rV_2=\frac{1}{\sqrt{\beta m}}\lV\bm{y}_{k+1}-\bm{A}_{k+1}\bm{x}^{\natural}\rV_2
\le  \frac{C_{\lambda}}{\sqrt{\beta}}\lV\bm{x}_k-\bm{x}^{\natural}\rV_2.
\end{align}
Recall that $\{\bm{x}_{k,\ell}\}_{\ell=0}^L$ is the sequence generated by HTP in the $(k+1)$-iteration as stated in Step~7 of \cref{alg:SAM}, and we have set the initial guess $\bm{x}_{k,0}:=\bm{x}_{k}$ and define the output $\bm{x}_{k+1}:=\bm{x}_{k,L}$. By using \eqref{srip:union} with $r =3s$ and applying \cite[Theorem~3.8]{foucart2011hard}, we obtain
\begin{align}\label{eq:robustHTP}
\lV\bm{x}_{k,\ell}-\bm{x}^{\natural}\rV_2\le \rho^{\ell}\lV\bm{x}_{k,0}-\bm{x}^{\natural}\rV_2+\tau\frac{1-\rho^\ell}{1-\rho}\lV\bm{e}_{k}\rV_2,
\end{align}
where $\rho=\sqrt{\frac{2\delta^2}{1-\delta^2}}$, and $\tau=\frac{\sqrt{2(1-\delta)}+\sqrt{1+\delta}}{1-\delta}$. Combining \eqref{eq:robustHTP} and \eqref{eq:estek} gives
\begin{align*}
\lV\bm{x}_{k,\ell}-\bm{x}^{\natural}\rV_2
&\le  \rho^{\ell}\lV\bm{x}_k-\bm{x}^{\natural}\rV_2+\frac{\tau(1-\rho^\ell)C_{\lambda}}{\sqrt{\beta}(1-\rho)}\lV\bm{x}_k-\bm{x}^{\natural}\rV_2\\
&=\left(\rho^{\ell}+\frac{\tau(1-\rho^\ell)C_{\lambda}}{\sqrt{\beta}(1-\rho)}\right)\lV \bm{x}_{k}-\bm{x}^{\natural}\rV_2,
\end{align*}
where in the first inequality we used $\bm{x}_{k,0}=\bm{x}_{k}$. Since $\bm{x}_{k+1}=\bm{x}_{k,L}$, we have
\begin{equation*}
\begin{split}
\lV\bm{x}_{k+1}-\bm{x}^{\natural}\rV_2
&\le\left(\rho^{L}+\frac{\tau(1-\rho^L)C_{\lambda}}{\sqrt{\beta}(1-\rho)}\right)\lV \bm{x}_{k}-\bm{x}^{\natural}\rV_2\cr
&\leq\underbrace{\left(\rho^{L}+\frac{2\tau\left(\sqrt{10}\times 10^{-3}+\frac{\lambda}{\sqrt{\beta}}\cdot\sqrt{\frac{21}{20}}\right)}{(1-\lambda)(1-\rho)}\right)}_{\alpha}\lV \bm{x}_{k}-\bm{x}^{\natural}\rV_2,
\end{split}
\end{equation*}
where in the last inequality we have used the expression of $C_{\lambda}$ in \cref{bound:Ax-y} and $\beta\ge \frac{1}{10}$. Therefore, since  $\delta=0.1$, and $\lambda\in[0,\sqrt{\beta}/8]$, it can be verified straightforwardly that for $L=1$, we have
\begin{equation*}
\alpha \leq
\rho+\frac{2\tau\left(\sqrt{10}\times 10^{-3}+\frac18\cdot\sqrt{\frac{21}{20}}\right)}{\frac{7}{8}}\le 0.95,
\end{equation*}
and for $L\ge 2$, we have
\begin{align}\label{def:alpha0}
\alpha
\leq \left.\left(\rho^{L}+\frac{2\tau\left(\sqrt{10}\times10^{-3}+\frac18\cdot\sqrt{\frac{21}{20}}\right)}{\frac{7}{8}(1-\rho)}\right)\right|_{L=2,\delta=0.1}\le 0.7.
\end{align}
Therefore, for all $L\ge1$, we have $\alpha \leq \alpha_0$ where $\alpha_0=0.95$.
\end{proof}

\subsection{Proof of \cref{prop:convergeAlm}}\label{subsec:propconvergeAlm}

\begin{proof}
The proposition is proved under the event \eqref{event:boundAx} with $\lambda=\frac18$ and the event \eqref{srip:union} with $K=1,~\beta=1,~r=3s$ and $\delta=0.1$. Without loss of generality and for convinience, we consider only the case $\lV\bm{x}_{0}-\bm{x}^{\natural}\rV_2\le\lV\bm{x}_{0}+\bm{x}^{\natural}\rV_2$ for the given initialization $\bm{x}_{0}$. In this case, the distance is reduced to $\mathrm{dist}\left(\bm{x}_{0},\bm{x}^{\natural}\right)=\lV\bm{x}_{0}-\bm{x}^{\natural}\rV_2$, and we will show that the sequence $\{\|\bm{x}_{k}-\bm{x}^{\sharp}\|_2\}_{k\ge0}$ decreases to $0$ geometrically. In the case of $\lV\bm{x}_{0}-\bm{x}^{\natural}\rV_2>\lV\bm{x}_{0}+\bm{x}^{\natural}\rV_2$, it follows the same proof.

Since $\beta=1$, we have $[m]=\I_1=\I_2=\ldots$, and hence $\bm{A}=\bm{A}_1=\bm{A}_2=\ldots$. Therefore, the event \eqref{srip:union} with  $K=1,~\beta=1,~r=2s,~\delta=0.1$ implies $\|\bm{A}\bm{z}\|_2\geq \sqrt{(1-\delta)m}\|\bm{z}\|_2$ for all $2s$-sparse vector $\bm{z}$. Since $\bm{x}_{k+1}-\bm{x}^{\natural}$ is at most $2s$-sparse for any $k$, we obtain
\begin{equation}\label{eq:RIPAxk+1-xsharp}
\lV  \bm{A}\left(\bm{x}_{k+1}-\bm{x}^{\natural}\right)\rV_2
\ge \sqrt{\left(1-\delta\right)m}\lV\bm{x}_{k+1}-\bm{x}^{\natural}\rV_2,\qquad \forall~k\geq 0.
\end{equation}

Next, we show that, if $\|\bm{x}_k-\bm{x}^{\sharp}\|_2\leq\frac{1}{8}\|\bm{x}^{\sharp}\|_2$, then
\begin{equation}\label{eq:contractexact}
\lV \bm{x}_{k+1}-\bm{x}^{\natural}\rV_2
\le\zeta_0\lV \bm{x}_{k}-\bm{x}^{\natural}\rV_2
\end{equation}
for some universal constant $\zeta_0\in(0,1)$. To this end, we apply the triangle inequality to obtain
\begin{align}\label{eq:triinqaxk+1-yk+1}
\lV  \bm{A}\bm{x}_{k+1}-\bm{y}_{k+1}\rV_2
&=\lV  \bm{A}\bm{x}_{k+1}-\bm{A}\bm{x}^{\natural}+\bm{A}\bm{x}^{\natural}-\bm{y}_{k+1}\rV_2 \nonumber \\
&\ge \lV  \bm{A}\bm{x}_{k+1}-\bm{A}\bm{x}^{\natural}\rV_2-\lV\bm{A}\bm{x}^{\natural}-\bm{y}_{k+1}\rV_2.
\end{align}
Since $\bm{x}_{k+1}=\arg\min_{\|\bm{x}\|_0\leq s}\lV \bm{A}\bm{x}-\bm{y}_{k+1}\rV_2$, it holds that
\begin{equation*}
  \lV  \bm{A}\bm{x}_{k+1}-\bm{y}_{k+1}\rV_2\le\lV\bm{A}\bm{x}^{\natural}-\bm{y}_{k+1}\rV_2.  \end{equation*}
  Plugging it into \eqref{eq:triinqaxk+1-yk+1}, we get
\begin{equation*}
\lV  \bm{A}\bm{x}_{k+1}-\bm{A}\bm{x}^{\natural}\rV_2\le 2\lV\bm{A}\bm{x}^{\natural}-\bm{y}_{k+1}\rV_2\le 2C_{\lambda}\sqrt{m}\lV \bm{x}_{k}-\bm{x}^{\natural}\rV_2,
\end{equation*}
where the last inequality follows from \cref{bound:Ax-y} (which holds true because of the event \eqref{event:boundAx}). By further considering \eqref{eq:RIPAxk+1-xsharp}, we obtain
\begin{align}\label{ineq:onestep}
\lV \bm{x}_{k+1}-\bm{x}^{\natural}\rV_2
\le \frac{2C_{\lambda}}{\sqrt{1-\delta}}\lV \bm{x}_{k}-\bm{x}^{\natural}\rV_2.
\end{align}
Recall that $C_{\lambda}=\frac{2}{1-\lambda}\left(10^{-3}+\lambda \sqrt{\frac{21}{20}}\right)$. Obviously, since $\lambda=\frac18$, and $\delta=0.1$, the factor
$$
\frac{2C_{\lambda}}{\sqrt{1-\delta}}=\frac{2C_{\frac{1}{8}}}{\sqrt{1-0.1}}:=\zeta_0\in(0,1),
$$
which shows \eqref{eq:contractexact}. The numerical value $\zeta_0$ is about 0.6. Since the initial guess satisfies $\|\bm{x}_0-\bm{x}^{\sharp}\|_2\leq\frac18\|\bm{x}^{\sharp}\|_2$,  an induction of \eqref{eq:contractexact} on $k$ implies
\begin{equation}\label{ineq:multistep}
\lV \bm{x}_{k+1}-\bm{x}^{\natural}\rV_2
\le\zeta_0\lV \bm{x}_{k}-\bm{x}^{\natural}\rV_2,\qquad\forall~k.
\end{equation}

Finally, because the constants $\lambda=\frac18$, $\beta=1$, and $\delta=0.1$ are fixed, the probability that both the event \eqref{event:boundAx} with $\lambda=\frac18$ and the event \eqref{srip:union} with $K=1, ~\beta=1,~r=2s,~\delta=0.1$ hold is at least $1-e^{-C'm}$ provided $m\ge C s\log (n/s))$ for universal positive constants $C$ and $C'$. By setting $\zeta=\zeta_0$, we conclude the proof.


\end{proof}

\subsection{Proof of \cref{localconvergence}}\label{subsec:localconvergence}
\begin{proof}
The same as \cref{prop:convergeAlm}, without loss of generality and for convinience, we consider only the case $\lV\bm{x}_{0}-\bm{x}^{\natural}\rV_2\le\lV\bm{x}_{0}+\bm{x}^{\natural}\rV_2$ for the given initialization $\bm{x}_{0}$. In this case, the distance is reduced to $\mathrm{dist}\left(\bm{x}_{0},\bm{x}^{\natural}\right)=\lV\bm{x}_{0}-\bm{x}^{\natural}\rV_2$. In the case of $\lV\bm{x}_{0}-\bm{x}^{\natural}\rV_2>\lV\bm{x}_{0}+\bm{x}^{\natural}\rV_2$, it follows the same proof.

We assume the event \eqref{event:boundAx} with $\lambda=\frac{\sqrt{\beta}}{8}$ and the event \eqref{srip:union} with $K$, $r=3s$, $\delta=0.1$. Here $K$ is a positive integer that will be determined later. According to \cref{sRIP} and \cref{bound:Ax}, the probability that these two events hold simultaneously is at least $1-2Ke^{-C_3\beta^2m}$ provided $m\geq C_2\beta^{-2}$, where $C_2, C_3$ are universal positive constants.

With these, Parts (a) and (b) of the theorem are proved respectively as in the following.

\paragraph{(a)} This part is a direct consequence of \cref{convergence:exact}. Suppose $\lV\bm{x}_{k}-\bm{x}^{\natural}\rV_2\leq \frac{\sqrt{\beta}}{8} \lV\bm{x}^{\natural}\rV_2$. Under the two events \eqref{srip:union}  and \eqref{event:boundAx},  \cref{convergence:exact} implies
$$
\lV\bm{x}_{k+1}-\bm{x}^{\natural}\rV_2\leq \alpha_0\lV\bm{x}_{k}-\bm{x}^{\natural}\rV_2\leq \frac{\sqrt{\beta}}{8} \lV\bm{x}^{\natural}\rV_2.
$$
This by induction implies that: whenever the initialization satisfies $\lV\bm{x}_0-\bm{x}^{\natural}\rV_2\leq \frac{\sqrt{\beta}}{8} \lV\bm{x}^{\natural}\rV_2$, we have always
$$
\lV\bm{x}_{k+1}-\bm{x}^{\natural}\rV_2\leq \alpha_0\lV\bm{x}_{k}-\bm{x}^{\natural}\rV_2,\quad\forall~0\le k\le K-1.
$$

\paragraph{(b)}
Let
$$\E_{k}=\left\{i:\mathrm{sgn}\left(\bm{a}_i^T \bm{x}_{k}\right)\neq\mathrm{sgn}\left(\bm{a}_i^T \bm{x}^{\natural}\right)\right\},$$
and define $\D_{k}:=\I_{k+1}\bigcap\E_{k}$. Then
\begin{align}\label{inqR:y-Ax}
\begin{split}
\lv\l \bm{y}_{k+1}-\bm{A}_{k+1}\bm{x}^{\natural}, \bm{A}_{k+1}\bm{x}^{\natural}\r\rv&=
\lv\l \bm{y}_{\I_{k+1}}\odot \mathrm{sgn}(\bm{A}_{k+1}\bm{x}_{k})-\bm{A}_{k+1}\bm{x}^{\natural}, \bm{A}_{k+1}\bm{x}^{\natural}\r\rv\\
&=\Big|\sum_{i\in \I_{k+1}}\big( \lv\bm{a}_i^T \bm{x}^{\natural}\rv\cdot\mathrm{sgn}\left(\bm{a}_i^T \bm{x}_{k}\right)-\bm{a}_i^T \bm{x}^{\natural}\big)(\bm{a}_i^T \bm{x}^{\natural})\Big|\\
&=\Big|\sum_{i\in \I_{k+1}} \lv\bm{a}_i^T \bm{x}^{\natural}\rv^2\big(\mathrm{sgn}\left(\bm{a}_i^T \bm{x}^{\sharp}\right)\mathrm{sgn}\left(\bm{a}_i^T \bm{x}_{k}\right)-1\big)\Big|\\
&=2\sum_{i\in \D_{k}}\lv\bm{a}_i^T \bm{x}^{\natural} \rv^2=2\sum_{i\in \D_{k}}y_i^2\ge 2 \lv \D_{k}\rv y_{\min}^2,
\end{split}
\end{align}
where $y_{\min}$ is the minimum nonzero element in $\left\{{y}_i\right\}_{i=1}^m$. On the other hand, because of  \eqref{srip:union} and \eqref{event:boundAx}, for any $k\in\{1,2,\ldots,K\}$,
\begin{align}\label{inqL:y-Ax}
\begin{split}
\lv\l \bm{y}_{k+1}-\bm{A}_{k+1}\bm{x}^{\natural}, \bm{A}_{k+1}\bm{x}^{\natural}\r\rv
&\le \lV \bm{y}_{k+1}-\bm{A}_{k+1}\bm{x}^{\natural}\rV_2\cdot\lV\bm{A}_{k+1}\bm{x}^{\natural} \rV_2\\
&\le C_{\lambda}\sqrt{m}\lV \bm{x}_{k}-\bm{x}^{\natural}\rV_2\cdot \sqrt{(1+\delta)\beta m}\lV\bm{x}^{\natural}\rV_2\\
&\le \alpha_0^k mC_{\lambda}\sqrt{\beta(1+\delta)}\lV \bm{x}_{0}-\bm{x}^{\natural}\rV_2\lV\bm{x}^{\natural}\rV_2\\
&\le  \alpha_0^k \lambda mC_{\lambda}\sqrt{\beta(1+\delta)}\lV\bm{x}^{\natural}\rV_2^2,
\end{split}
\end{align}
where the second line follows from \cref{bound:Ax-y} and \eqref{srip:union}, the third line follows from \cref{convergence:exact} (as $L\ge 2s$) and the initial guess satisfies $\lV \bm{x}_{0}-\bm{x}^{\natural}\rV_2\le \frac{\sqrt{\beta}}{8}\lV\bm{x}^{\natural}\rV_2$ with $\lambda\in [0,\frac{\sqrt{\beta}}{8}]$, and the last line follows from the assumption $\lV \bm{x}_{0}-\bm{x}^{\natural}\rV_2\le \lambda\lV\bm{x}^{\natural}\rV_2$. Combining \eqref{inqR:y-Ax} and \eqref{inqL:y-Ax} gives
\begin{align*}
\lv \D_{k}\rv y_{\min}^2\le \frac{1}{2}\alpha_0^k \lambda mC_{\lambda}\sqrt{\beta(1+\delta_s)}\lV\bm{x}^{\natural}\rV_2^2.
\end{align*}
Choosing $K$ to be the minimum integer such that
\begin{align}\label{cond:finite-k}
\frac{1}{2}\alpha_0^{K-1} \lambda mC_{\lambda}\sqrt{\beta(1+\delta_s)}\lV\bm{x}^{\natural}\rV_2^2<y_{\min}^2
\leq \frac{1}{2}\alpha_0^{K-2} \lambda mC_{\lambda}\sqrt{\beta(1+\delta_s)}\lV\bm{x}^{\natural}\rV_2^2,
\end{align}
it holds that $\lv \D_{k}\rv<1$ for all $k\geq K-1$. Since $|\D_{k}|$ is a nonnegative integer, one has $\lv \D_{k}\rv=0$ for all $k\geq K-1$.

Let us estimate $K$ satisfying \eqref{cond:finite-k}. Notice that $a_{ij}\sim \N(0,1)$, and $\{\bm{a}_i^T\bm{x}^{\natural}\}_{i=1}^m$ are independent. By the proof of \cite[Theorem~1]{CAI2022367}, we have
$$
\mathsf{P}\left(\lv\bm{a}_i^T\bm{x}^{\natural} \rv\ge m^{-2}\sqrt{\frac{\pi}{2}}\lV \bm{x}^{\natural}\rV_2,\quad \forall i\in [m] \right) \ge 1-\frac{1}{m}.
$$
Since $y_i=\lv \bm{a}_i^T\bm{x}^{\natural}\rv $ and $i\in [m]$, the above inequality implies
\begin{align}\label{lowbound:y}
\mathsf{P}\left(y_{\min}\ge m^{-2}\sqrt{\frac{\pi}{2}}\lV \bm{x}^{\natural}\rV_2\right) \ge 1-\frac{1}{m}.
\end{align}
Plugging it into \eqref{cond:finite-k}, we obtain that, with probability at least $1-\frac{1}{m}$,
\begin{equation}\label{eq:event3}
\frac{1}{2}\alpha_0^{K-2} \lambda mC_{\lambda}\sqrt{\beta(1+\delta)}\lV\bm{x}^{\natural}\rV_2^2
\geq m^{-4}\frac{\pi}{2}\lV \bm{x}^{\natural}\rV_2^2,
\end{equation}
which is equivalent to, by noticing $\lambda\in [0, \frac{\sqrt{\beta}}{8}]$ and  $C_{\frac{\sqrt{\beta}}{8}}<1$,
\begin{equation*}
\begin{split}
K&\leq \frac{5\log m+\log\Big(\lambda C_{\lambda}\sqrt{\beta(1+\delta)}/\pi\Big)}{\log \alpha_0^{-1}} +2
\leq \frac{5\log m+\log \left(\frac{\sqrt{\beta}}{8\pi} C_{\frac{\sqrt{\beta}}{8}}\sqrt{\beta(1+\delta)}\right) }{\log \alpha_0^{-1}} +2 \cr
&\leq \underbrace{\left(\frac{5}{\log \alpha_0^{-1}}+2\right)}_{C_1}\cdot\log m.
\end{split}
\end{equation*}
Since $\delta=0.1$ and $L\ge 2s\ge 2$,  by \eqref{def:alpha0} we then know $\alpha_0 \le 0.7$, and the numerical value of $C_1$ is about $16$. In summary, we have
$$
\lv \D_{k}\rv=0\ \text{for all}\ k\geq K-1 \text{~with some~}\  K\leq C_1\log m.
$$

By the definition of $\D_{k}$, we have
\begin{align*}
\bm{y}_{K}= \mathrm{sgn}(\bm{A}_{K}\bm{x}_{K-1})\odot \bm{y}_{\I_{K}}=\mathrm{sgn}(\bm{A}_{K}\bm{x}^{\natural})\odot \lvert \bm{A}_{K}\bm{x}^{\natural}\rvert=\bm{A}_{K}\bm{x}^{\natural}.
\end{align*}
Therefore, in the $K$-th iteration of \cref{alg:SAM}, we are solving the following problem
\begin{align*}
\bm{x}_{K}=\mathop{\mathrm{arg~min}}\limits_{ \lV \bm{x}\rV_0\leq  s}\lV \bm{A}_{K} \bm{x} - \bm{A}_{K}\bm{x}^{\natural}\rV_2^2,
\end{align*}
via HTP (\cref{alg:htp}), and the maximum allowed iteration number $L$ of HTP satisfies $L\ge 2s$. Furthermore, in event \eqref{srip:union}, the coefficient matrix $\bm{A}_K$ satisfies RIP for $3s$-sparse vectors with constant $\delta=0.1\leq\frac13$. Altogether, according to the exact recovery result of HTP stated in \cite[Theorem 5]{bouchot2016hard}, $\bm{x}_{K}=\bm{x}_{\sharp}$, which obviously implies
\begin{equation}\label{eq:exact}
\bm{x}_{k}=\bm{x}_{\sharp}, \qquad\forall~k\geq K.
\end{equation}

Finally, in the above proof of \eqref{eq:exact}, besides events \eqref{srip:union} and \eqref{event:boundAx}, we have also assumed event \eqref{eq:event3}. By a simple union bound, we obtain that the probability for \eqref{eq:exact} is at least $1-2Ke^{-C_3\beta^2m}-m^{-1}$ provided $m\geq C_2\beta^{-2}s\log(n/s)$.

\end{proof}

\subsection{Proof of \cref{converge:alm}}\label{subsec:proofalm}
\begin{proof}
The proof is almost the same as that of Part (b) of \cref{localconvergence}. The only difference is that, when $\beta=1$, the set $\I_k$ satisfies $[m]=\I_1=\I_2=\ldots$. As a consequence, we have $\bm{A}=\bm{A}_1=\bm{A}_2=\ldots$. Therefore, the simultaneous RIP \eqref{srip:union} holds for $K=+\infty$ with probability at least $1-2e^{-c_2m}$. Thus, the probability in the theorem statement is $1-e^{-C_6m}-m^{-1}$.
\end{proof}


\subsection{Supporting lemmas}\label{subsec:supportlemmas}
In this subsection, we present some supporting lemmas from the literature, to make the paper more self-contained.

The following \cref{ARIP} is well known in compressed sensing theory \cite{foucart2013invitation,candes2005decoding}, which states that the random Gaussian matrix $\frac{1}{\sqrt{m}}\bm{A}$ satisfies the RIP as long as $m$ is sufficiently large.
\begin{lemma}[{\cite[Theorem 9.27]{foucart2013invitation}}]\label{ARIP}
Let each entry of $\bm{A}$ be independently sampled from Gaussian $\N(0,1)$. There exists some universal positive constants $\tilde{c}_1,\tilde{c}_2$ such that: For any natural number $r \leq n$ and any $\delta_r\in(0,1)$, with probability at least $1-e^{-\tilde{c}_1 m}$, $\frac{1}{\sqrt{m}}\bm{A}$ satisfies $r$-RIP with constant $\overline{\delta}_r$, i.e.,
\begin{equation*}
\left(1-\overline{\delta}_r\right)\lV\bm{x}\rV_2^2\le \frac{1}{m}\lV \bm{A}\bm{x}\rV_2^2\le \left(1+\overline{\delta}_r\right)\lV\bm{x}\rV_2^2, \qquad\forall~\lV\bm{x}\rV_0\le r,
\end{equation*}
provided $m\ge \tilde{c}_2\delta_r^{-2}r\log\left(n/r\right)$.
\end{lemma}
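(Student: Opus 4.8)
The plan is to prove this by the standard two-level covering argument for (sub)Gaussian matrices: a pointwise concentration estimate for $\tfrac1m\lV\bm A\bm x\rV_2^2$, then an $\varepsilon$-net over each $r$-dimensional coordinate subspace, and finally a union bound over all $\binom nr$ such subspaces. This is exactly the scheme already carried out in the proof of \cref{sRIP} in the degenerate case $\beta=1$ (where $\I_1=[m]$ and $\bm A_1=\bm A$), so the three ingredients below can be quoted almost verbatim.

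First I would record the pointwise concentration bound: for every fixed $\bm x\in\mathbb R^n$ and every $t\in(0,1)$,
\begin{equation*}
\mathsf{P}\!\left(\lv \tfrac1m\lV\bm A\bm x\rV_2^2-\lV\bm x\rV_2^2\rv>t\,\lV\bm x\rV_2^2\right)\le 2e^{-c_0 t^2 m},
\end{equation*}
for a universal constant $c_0>0$. Since the rows $\bm a_i$ are i.i.d.\ $\N(\bm 0,\bm I)$, the scalars $g_i:=\bm a_i^T\bm x/\lV\bm x\rV_2$ are i.i.d.\ standard normal, so $\tfrac1m\lV\bm A\bm x\rV_2^2/\lV\bm x\rV_2^2=\tfrac1m\sum_{i=1}^m g_i^2$ is a normalized $\chi^2_m$ variable; the displayed bound then follows from the Laurent--Massart inequality, or equivalently by applying Bernstein's inequality to the centered subexponential summands $g_i^2-1$, exactly as \eqref{concentration} is obtained inside the proof of \cref{sRIP}.

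Next I would run the net argument. Fix a support $\S\subseteq[n]$ with $|\S|=r$ and let $\mathbb B_\S=\{\bm x:\mathrm{supp}(\bm x)\subseteq\S\}$. Choose an $\varepsilon$-net $\N_\S$ of the unit sphere of $\mathbb B_\S$ with $|\N_\S|\le(1+2/\varepsilon)^r$; a union bound over $\N_\S$ using the pointwise estimate controls $\lv\tfrac{1}{\sqrt m}\lV\bm A\bm u\rV_2-1\rv$ simultaneously for all $\bm u\in\N_\S$, and the usual net-to-sphere approximation step (cf.\ the proof of \cref{sRIP}) upgrades this to control of $\lv\tfrac{1}{\sqrt m}\lV\bm A\bm x\rV_2-1\rv$ for \emph{every} unit vector $\bm x\in\mathbb B_\S$, at the cost of a factor involving $\varepsilon$ that is absorbed by taking $\varepsilon$ and the tolerance $t$ both of order $\delta_r$. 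Union-bounding over the $\binom nr\le(en/r)^r$ subspaces, the probability that the RIP fails is at most
\begin{equation*}
2\,(en/r)^r(1+2/\varepsilon)^r e^{-c_0 t^2 m}=2\exp\!\Big(r\log(en/r)+r\log(1+2/\varepsilon)-c_0 t^2 m\Big),
\end{equation*}
and imposing $m\ge\tilde c_2\,\delta_r^{-2}\,r\log(n/r)$ drives this exponent to at most $-\tilde c_1 m$ for suitable universal constants $\tilde c_1,\tilde c_2>0$, which is the claim.

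The genuinely routine pieces are the $\chi^2$ tail and the net-to-sphere passage; the only point needing care is keeping $\tilde c_1,\tilde c_2$ \emph{universal}, i.e.\ independent of $\delta_r$, even though the net granularity $\varepsilon$ must shrink with $\delta_r$. This is harmless, since the extra term $r\log(1+2/\varepsilon)$, of order $r\log(1/\delta_r)$, is dominated by $\delta_r^{-2}r$ and hence absorbed into $\tilde c_2$ (using $\log(en/r)\ge1$, with $r$ close to $n$ being trivial). I therefore expect no real obstacle — the statement is classical (see \cite[Theorem 9.27]{foucart2013invitation}) and the argument is the same covering scheme already used for \cref{sRIP}.
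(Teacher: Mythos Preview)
The paper does not give its own proof of this lemma: it is listed under ``Supporting lemmas'' and simply cited from \cite[Theorem~9.27]{foucart2013invitation}. Your proposal is the standard and correct covering argument, and indeed it is essentially the $\beta=1$, $K=1$ specialization of the proof the paper does write out for \cref{sRIP}, so there is nothing to compare and no gap to report.
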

\begin{lemma}[Bernstein's inequality,{\cite[Corollary 7.32]{foucart2013invitation}}]\label{Bernstein} Let $X_1, X_2,\cdots, X_m$ be independent mean-zero subexponential random
variables, i.e., $\mathsf{P}\left(\lv X_i\rv \ge u\right)\le \tilde{c}_3 e^{-\tilde{c}_4 u}$ for some constants $\tilde{c}_3,\tilde{c}_4>0$ for all $u>0$, $i\in [m]$. Then it holds
\begin{align*}
\mathsf{P}\left(\lv\sum_{i=1}^m X_i\rv \ge u\right)\le 2\exp\left( -\frac{(\tilde{c}_4 u)^2/2}{2\tilde{c}_3 m+\tilde{c}_4u}\right).
\end{align*}
\end{lemma}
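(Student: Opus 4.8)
The plan is to prove \cref{Bernstein} by the classical Chernoff (exponential-moment) method, using only the sub-exponential tail hypothesis and the independence of the $X_i$. First I would convert the tail bound into moment bounds: integrating $\mathsf{P}(|X_i|\ge u)\le\tilde c_3 e^{-\tilde c_4 u}$ against $p\,u^{p-1}\,du$ yields $\mathsf{E}[|X_i|^p]\le \tilde c_3\,p!/\tilde c_4^p$ for every integer $p\ge 1$. Next, for $0\le\theta<\tilde c_4$, I would expand the moment generating function as a power series, use $\mathsf{E}[X_i]=0$ to cancel the linear term, bound $\mathsf{E}[X_i^p]$ by $\mathsf{E}[|X_i|^p]$, and sum the resulting geometric series:
\[
\mathsf{E}\bigl[e^{\theta X_i}\bigr]=1+\sum_{p\ge 2}\frac{\theta^p\,\mathsf{E}[X_i^p]}{p!}\ \le\ 1+\tilde c_3\sum_{p\ge 2}\Bigl(\frac{\theta}{\tilde c_4}\Bigr)^{p}=1+\frac{\tilde c_3\,(\theta/\tilde c_4)^2}{1-\theta/\tilde c_4}\ \le\ \exp\Bigl(\frac{\tilde c_3\,\theta^2}{\tilde c_4(\tilde c_4-\theta)}\Bigr),
\]
where the last step uses $1+y\le e^{y}$, and the geometric series converges precisely because $\theta<\tilde c_4$.

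Then I would apply Markov's inequality to $e^{\theta\sum_i X_i}$ and factor the expectation using independence, obtaining for every $0\le\theta<\tilde c_4$
\[
\mathsf{P}\Bigl(\sum_{i=1}^m X_i\ge u\Bigr)\le e^{-\theta u}\prod_{i=1}^m\mathsf{E}\bigl[e^{\theta X_i}\bigr]\le\exp\Bigl(-\theta u+\frac{m\tilde c_3\,\theta^2}{\tilde c_4(\tilde c_4-\theta)}\Bigr).
\]
Writing $a=\tilde c_4 u$ and $b=m\tilde c_3$, the substitution $\theta=\tilde c_4\,y/(1+y)$, which maps $y\in[0,\infty)$ bijectively onto $\theta\in[0,\tilde c_4)$, turns the exponent into $(by^2-ay)/(1+y)$; this is minimized over $y\ge0$ at $y^\star=\sqrt{1+a/b}-1$, with minimal value $2\sqrt{b^2+ab}-2b-a$. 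Finally I would bound this via the elementary inequality $\sqrt{1+x}\le 1+x/2$ (equivalently $(\sqrt{1+x}+1)^2\le 2(2+x)$), which gives $2\sqrt{b^2+ab}-2b-a\le -\dfrac{a^2/2}{2b+a}=-\dfrac{(\tilde c_4 u)^2/2}{2\tilde c_3 m+\tilde c_4 u}$. Running the same argument with $-X_1,\dots,-X_m$ in place of $X_1,\dots,X_m$ (their tails satisfy the identical hypothesis) and taking a union bound supplies the factor $2$, completing the proof.

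The result is standard (indeed it is quoted from \cite{foucart2013invitation}), so no genuine obstacle is expected; the only care needed is bookkeeping. One must keep the constraint $\theta<\tilde c_4$ throughout --- this is exactly the range in which the mixed sub-Gaussian/sub-exponential Bernstein bound lives --- and track the constants faithfully through the $\theta$-optimization so that the denominator comes out as $2\tilde c_3 m+\tilde c_4 u$ rather than some constant multiple of it. The one step that must be checked as an exact (not merely asymptotic) inequality is the final comparison; after writing its left-hand side as $(\sqrt{1+x}-1)^2$ it collapses to $\sqrt{1+x}\le 1+x/2$ and is immediate.
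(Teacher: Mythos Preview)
Your proof is correct and follows the standard Chernoff--Cram\'er route: convert the tail hypothesis to moment bounds $\mathsf{E}[|X_i|^p]\le\tilde c_3\,p!/\tilde c_4^{p}$, control the MGF for $\theta<\tilde c_4$, apply Markov and independence, and optimize in $\theta$. The bookkeeping in the optimization step checks out; in particular, writing the minimum as $-b(\sqrt{1+a/b}-1)^2$ and then using $(\sqrt{1+x}+1)^2\le 2(2+x)$ (equivalently $(\sqrt{1+x}-1)^2\ge x^2/(2(2+x))$) gives exactly the stated denominator $2\tilde c_3 m+\tilde c_4 u$.

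There is nothing to compare against, however: the paper does not prove \cref{Bernstein}. It is listed among the ``supporting lemmas from the literature'' in \cref{subsec:supportlemmas} and is simply quoted as \cite[Corollary~7.32]{foucart2013invitation} without proof. Your argument is precisely the classical derivation that appears in that reference, so in that sense you have reproduced the intended proof rather than offered an alternative.
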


Let $\bm{B}\in \mathbb{R}^{m\times n}$ be a random matrix with $\mathsf{E}\left(\lV\bm{B}\bm{x}\rV_2^2\right)=\lV\bm{x}\rV_2^2$ for any $\bm{x}\in\mathbb{R}^n$. Then, for any $\bm{x}\in\mathbb{R}^n$, the random variable $\lV\bm{B}\bm{x}\rV_2^2$ is said to be strongly concentrated about its expected value if
\begin{align}\label{strongconcentration}
\mathsf{P}\left(\lv \lV\bm{B}\bm{x}\rV_2^2-\lV\bm{x}\rV_2^2\rv\ge \tilde{\varepsilon} \lV\bm{x}\rV_2^2 \right)\le 2e^{-\tilde{c}(\tilde{\varepsilon})m}, \qquad 0<\tilde{\varepsilon}<1,
\end{align}
where 
$\tilde{c}(\tilde{\varepsilon})$ is a positive constant depending only on $\tilde{\varepsilon}$ for any $\tilde{\varepsilon}\in (0,1)$.
\begin{lemma}[{\cite[Lemma 5.1]{Baraniuk2008A}}]\label{lemma:concentration} Let $\bm{B}\in \mathbb{R}^{m\times n}$ be a random matrix that satisfies the concentration inequality \eqref{strongconcentration}. Then for any $\tilde{\delta}\in(0,1)$ and any $\S\subseteq [n]$ with $|\S|=r$, it holds
$$
(1-\tilde{\delta})\lV\bm{x}\rV_2\le \lV\bm{B}\bm{x}\rV_2\le (1+\tilde{\delta})\lV\bm{x}\rV_2, \qquad\forall~ \bm{x}~\text{satisfies}~ \mathrm{support}(\bm{x})\subseteq \S
$$
with probability at least $1-2(12/\tilde{\delta})^r e^{-\tilde{c}(\tilde{\delta}/2)^2 m}$.
\end{lemma}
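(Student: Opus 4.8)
The plan is to reproduce the classical covering-number argument of \cite{Baraniuk2008A}; I include it only for self-containedness, since \cref{lemma:concentration} is invoked in the proof of \cref{sRIP}. First I would reduce the claim to a statement about the unit sphere of an $r$-dimensional subspace: put $\mathbb{B}_\S=\{\bm{x}\in\mathbb{R}^n:\mathrm{support}(\bm{x})\subseteq\S\}$, which is a linear subspace of dimension $r=|\S|$, and observe that by homogeneity of the map $\bm{x}\mapsto\lV\bm{B}\bm{x}\rV_2/\lV\bm{x}\rV_2$ it suffices to prove $(1-\tilde\delta)\le\lV\bm{B}\bm{x}\rV_2\le(1+\tilde\delta)$ for every $\bm{x}\in\mathbb{B}_\S$ with $\lV\bm{x}\rV_2=1$. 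Equivalently, writing $A:=\sup\{\lV\bm{B}\bm{x}\rV_2:\bm{x}\in\mathbb{B}_\S,\ \lV\bm{x}\rV_2=1\}$, I would aim to show $A\le 1+\tilde\delta$ together with the matching lower bound.

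Next I would introduce a finite net of resolution $\rho=\tilde\delta/4$. A standard volumetric estimate in the $r$-dimensional space $\mathbb{B}_\S$ produces a set $\mathcal{Q}\subseteq\mathbb{B}_\S$ with $\lV\bm{q}\rV_2\le 1$ for every $\bm{q}\in\mathcal{Q}$, of cardinality $|\mathcal{Q}|\le(1+2/\rho)^r\le(12/\tilde\delta)^r$ (using $1+8/\tilde\delta\le 12/\tilde\delta$ for $\tilde\delta\in(0,1)$), and such that each $\bm{x}\in\mathbb{B}_\S$ with $\lV\bm{x}\rV_2\le 1$ lies within distance $\rho$ of some point of $\mathcal{Q}$. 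I would then apply the concentration hypothesis \eqref{strongconcentration} to each of the at most $(12/\tilde\delta)^r$ points $\bm{q}\in\mathcal{Q}$ with parameter $\tilde\varepsilon=\tilde\delta/2$ and take a union bound; the resulting failure probability is the net cardinality times the per-point bound of \eqref{strongconcentration}, which is exactly the probability $2(12/\tilde\delta)^r e^{-\tilde{c}(\tilde\delta/2)^2 m}$ appearing in the statement. On the complementary event, every $\bm{q}\in\mathcal{Q}$ obeys $\big|\lV\bm{B}\bm{q}\rV_2^2-\lV\bm{q}\rV_2^2\big|\le\tfrac{\tilde\delta}{2}\lV\bm{q}\rV_2^2$, hence $\sqrt{1-\tilde\delta/2}\,\lV\bm{q}\rV_2\le\lV\bm{B}\bm{q}\rV_2\le\sqrt{1+\tilde\delta/2}\,\lV\bm{q}\rV_2$.

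Finally I would propagate the estimate from the net to the whole sphere. Conditioning on the above event and fixing a unit vector $\bm{x}\in\mathbb{B}_\S$, I pick $\bm{q}\in\mathcal{Q}$ with $\lV\bm{x}-\bm{q}\rV_2\le\rho$; since $\bm{x}-\bm{q}\in\mathbb{B}_\S$, the definition of $A$ gives $\lV\bm{B}(\bm{x}-\bm{q})\rV_2\le A\rho$, so the triangle inequality yields $\lV\bm{B}\bm{x}\rV_2\le\sqrt{1+\tilde\delta/2}+\tfrac{\tilde\delta}{4}A$. Taking the supremum over unit $\bm{x}$ gives $A\le\sqrt{1+\tilde\delta/2}\big/(1-\tilde\delta/4)$, which I would check is $\le 1+\tilde\delta$ for all $\tilde\delta\in(0,1)$; the lower bound $\lV\bm{B}\bm{x}\rV_2\ge\sqrt{1-\tilde\delta/2}-\tfrac{\tilde\delta}{4}A\ge 1-\tilde\delta$ then follows by the same reasoning once $A\le 1+\tilde\delta$ is available.

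The argument is routine and presents no serious obstacle; the only point that demands care is the bookkeeping of constants --- namely, choosing the net resolution $\rho$ small enough relative to the slack $\tilde\delta/2$ in \eqref{strongconcentration} so that the bootstrap for $A$ closes with the exact endpoints $1\pm\tilde\delta$ for every $\tilde\delta\in(0,1)$, while keeping the net cardinality bounded by $(12/\tilde\delta)^r$ so that the overall failure probability is precisely the net size times the per-point bound of \eqref{strongconcentration}.
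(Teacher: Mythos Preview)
Your argument is correct and is precisely the covering/net argument of \cite{Baraniuk2008A} that the paper cites; the paper itself does not reprove \cref{lemma:concentration} but merely imports it as a supporting lemma from the literature, so there is nothing to compare against beyond noting that your write-up faithfully reproduces the cited proof with the right constants.
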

\section{Conclusion}\label{section:conclusion}
We have proposed a novel stochastic method named SAM for sparse phase retrieval problem, which is based on a alternating minimization framework. It has been verified that the proposed SAM finds the exact solution in few number of iterations in our theory and experiments.  Moreover, numerical experiments also show that our algorithm SAM outperforms the comparative algorithms such as ThWF, SPARTA, CoPRAM and standard alternating minimization without randomness in terms of sample efficiency .


\section*{Acknowledgment}
The work of J.-F. Cai is partially supported by Hong Kong Research Grants Council (HKRGC) GRF grants 16309518, 16309219, 16310620, and 16306821. Y. Jiao was supported in part by the National Science Foundation of China under Grant 11871474 and by the research fund of KLATASDSMOE, and by  the Natural Science Foundation of Hubei Province (No. 2019CFA007). X.-L. Lu is partially supported by the National Key Research and Development Program of China (No.~2020YFA0714200), the National Science Foundation of China (No.~11871385) and the Natural Science Foundation of Hubei Province (No.~2019CFA007).

\bibliographystyle{abbrv}
\bibliography{sparsePR}

\begin{thebibliography}{10}

\bibitem{bahmani2017flexible}
S.~Bahmani, J.~Romberg, et~al.
\newblock A flexible convex relaxation for phase retrieval.
\newblock {\em Electronic Journal of Statistics}, 11(2):5254--5281, 2017.

\bibitem{balan2006signal}
R.~Balan, P.~Casazza, and D.~Edidin.
\newblock On signal reconstruction without phase.
\newblock {\em Applied and Computational Harmonic Analysis}, 20(3):345--356,
  2006.

\bibitem{Baraniuk2008A}
R.~Baraniuk, M.~Davenport, R.~Devore, and M.~Wakin.
\newblock A simple proof of the restricted isometry property for random
  matrices.
\newblock {\em Constructive Approximation}, 28(3):253--263, 2008.

\bibitem{blumensath2008iterative}
T.~Blumensath and M.~E. Davies.
\newblock Iterative thresholding for sparse approximations.
\newblock {\em Journal of Fourier Analysis and Applications}, 14(5-6):629--654,
  2008.

\bibitem{bouchot2016hard}
J.-L. Bouchot, S.~Foucart, and P.~Hitczenko.
\newblock Hard thresholding pursuit algorithms: number of iterations.
\newblock {\em Applied and Computational Harmonic Analysis}, 41(2):412--435,
  2016.

\bibitem{cai2021solving}
J.~Cai, M.~Huang, D.~Li, and Y.~Wang.
\newblock Solving phase retrieval with random initial guess is nearly as good
  as by spectral initialization.
\newblock {\em arXiv preprint arXiv:2101.03540}, 2021.

\bibitem{CAI2022367}
J.-F. Cai, J.~Li, X.~Lu, and J.~You.
\newblock Sparse signal recovery from phaseless measurements via hard
  thresholding pursuit.
\newblock {\em Applied and Computational Harmonic Analysis}, 56:367--390, 2022.

\bibitem{cai2018solving}
J.-F. Cai and K.~Wei.
\newblock Solving systems of phaseless equations via {R}iemannian optimization
  with optimal sampling complexity.
\newblock {\em arXiv preprint arXiv:1809.02773}, 2018.

\bibitem{cai2016optimal}
T.~T. Cai, X.~Li, and Z.~Ma.
\newblock Optimal rates of convergence for noisy sparse phase retrieval via
  thresholded wirtinger flow.
\newblock {\em The Annals of Statistics}, 44(5):2221--2251, 2016.

\bibitem{candes2015phase}
E.~J. Candes, Y.~C. Eldar, T.~Strohmer, and V.~Voroninski.
\newblock Phase retrieval via matrix completion.
\newblock {\em SIAM review}, 57(2):225--251, 2015.

\bibitem{candes2015phase1}
E.~J. Candes, X.~Li, and M.~Soltanolkotabi.
\newblock Phase retrieval via wirtinger flow: Theory and algorithms.
\newblock {\em IEEE Transactions on Information Theory}, 61(4):1985--2007,
  2015.

\bibitem{candes2005decoding}
E.~J. Candes and T.~Tao.
\newblock Decoding by linear programming.
\newblock {\em IEEE transactions on Information Theory}, 51(12):4203--4215,
  2005.

\bibitem{chen2015solving}
Y.~Chen and E.~Candes.
\newblock Solving random quadratic systems of equations is nearly as easy as
  solving linear systems.
\newblock In {\em Advances in Neural Information Processing Systems}, pages
  739--747, 2015.

\bibitem{chen2019gradient}
Y.~Chen, Y.~Chi, J.~Fan, and C.~Ma.
\newblock Gradient descent with random initialization: Fast global convergence
  for nonconvex phase retrieval.
\newblock {\em Mathematical Programming}, 176(1-2):5--37, 2019.

\bibitem{fan2014primal}
Q.~Fan, Y.~Jiao, and X.~Lu.
\newblock A primal dual active set algorithm with continuation for compressed
  sensing.
\newblock {\em IEEE Transactions on Signal Processing}, 62(23):6276--6285,
  2014.

\bibitem{fienup1982phase}
J.~R. Fienup.
\newblock Phase retrieval algorithms: a comparison.
\newblock {\em Applied Optics}, 21(15):2758--2769, 1982.

\bibitem{foucart2011hard}
S.~Foucart.
\newblock Hard thresholding pursuit: an algorithm for compressive sensing.
\newblock {\em SIAM Journal on Numerical Analysis}, 49(6):2543--2563, 2011.

\bibitem{foucart2013invitation}
S.~Foucart and H.~Rauhut.
\newblock {\em A Mathematical Introduction to Compressive Sensing.}
\newblock Applied and Numerical Harmonic Analysis. Birkhäuser, 2013.

\bibitem{gao2016gauss}
B.~Gao and Z.~Xu.
\newblock Gauss-newton method for phase retrieval.
\newblock {\em IEEE Transactions on Signal Processing}, 65(22):5885--5896,
  2017.

\bibitem{gerchberg1972practical}
R.~W. Gerchberg.
\newblock A practical algorithm for the determination of the phase from image
  and diffraction plane pictures.
\newblock {\em Optik}, 35:237--246, 1972.

\bibitem{goldstein2016phasemax}
T.~Goldstein and C.~Studer.
\newblock Phasemax: Convex phase retrieval via basis pursuit.
\newblock {\em IEEE Transactions on Information Theory}, 64(4):2675--2689,
  2018.

\bibitem{hand2018elementary}
P.~Hand and V.~Voroninski.
\newblock An elementary proof of convex phase retrieval in the natural
  parameter space via the linear program phasemax.
\newblock {\em Communications in Mathematical Sciences}, 16(7):2047--2051,
  2018.

\bibitem{harrison1993phase}
R.~W. Harrison.
\newblock Phase problem in crystallography.
\newblock {\em JOSA a}, 10(5):1046--1055, 1993.

\bibitem{jagatap2019sample}
G.~Jagatap and C.~Hegde.
\newblock Sample-efficient algorithms for recovering structured signals from
  magnitude-only measurements.
\newblock {\em IEEE Transactions on Information Theory}, 65(7):4434--4456,
  2019.

\bibitem{li2013sparse}
X.~Li and V.~Voroninski.
\newblock Sparse signal recovery from quadratic measurements via convex
  programming.
\newblock {\em SIAM Journal on Mathematical Analysis}, 45(5):3019--3033, 2013.

\bibitem{8918236}
Z.~{Li}, J.~{Cai}, and K.~{Wei}.
\newblock Toward the optimal construction of a loss function without spurious
  local minima for solving quadratic equations.
\newblock {\em IEEE Transactions on Information Theory}, 66(5):3242--3260,
  2020.

\bibitem{ma2018globally}
C.~Ma, X.~Liu, and Z.~Wen.
\newblock Globally convergent levenberg-marquardt method for phase retrieval.
\newblock {\em IEEE Transactions on Information Theory}, 65(4):2343--2359,
  2018.

\bibitem{miao1999extending}
J.~Miao, P.~Charalambous, J.~Kirz, and D.~Sayre.
\newblock Extending the methodology of x-ray crystallography to allow imaging
  of micrometre-sized non-crystalline specimens.
\newblock {\em Nature}, 400(6742):342--344, 1999.

\bibitem{miao2008extending}
J.~Miao, T.~Ishikawa, Q.~Shen, and T.~Earnest.
\newblock Extending x-ray crystallography to allow the imaging of
  noncrystalline materials, cells, and single protein complexes.
\newblock {\em Annu. Rev. Phys. Chem.}, 59:387--410, 2008.

\bibitem{NeedellCoSaMP}
D.~Needell and J.~A. Tropp.
\newblock Cosamp: Iterative signal recovery from incomplete and inaccurate
  samples.
\newblock {\em Applied and Computational Harmonic Analysis}, 26(3):301--321,
  2009.

\bibitem{netrapalli2013phase}
P.~Netrapalli, P.~Jain, and S.~Sanghavi.
\newblock Phase retrieval using alternating minimization.
\newblock In {\em Advances in Neural Information Processing Systems}, pages
  2796--2804, 2013.

\bibitem{shechtman2015phase}
Y.~Shechtman, Y.~C. Eldar, O.~Cohen, H.~N. Chapman, J.~Miao, and M.~Segev.
\newblock Phase retrieval with application to optical imaging: a contemporary
  overview.
\newblock {\em IEEE Signal Processing Magazine}, 32(3):87--109, 2015.

\bibitem{soltanolkotabi2019structured}
M.~Soltanolkotabi.
\newblock Structured signal recovery from quadratic measurements: Breaking
  sample complexity barriers via nonconvex optimization.
\newblock {\em IEEE Transactions on Information Theory}, 65(4):2374--2400,
  2019.

\bibitem{sun2018geometric}
J.~Sun, Q.~Qu, and J.~Wright.
\newblock A geometric analysis of phase retrieval.
\newblock {\em Foundations of Computational Mathematics}, 18(5):1131--1198,
  2018.

\bibitem{tan2019online}
Y.~S. Tan and R.~Vershynin.
\newblock Online stochastic gradient descent with arbitrary initialization
  solves non-smooth, non-convex phase retrieval.
\newblock {\em arXiv preprint arXiv:1910.12837}, 2019.

\bibitem{tan2019phase}
Y.~S. Tan and R.~Vershynin.
\newblock Phase retrieval via randomized kaczmarz: Theoretical guarantees.
\newblock {\em Information and Inference: A Journal of the IMA}, 8(1):97--123,
  2019.

\bibitem{WaldspurgerPhase}
I.~Waldspurger.
\newblock Phase retrieval with random gaussian sensing vectors by alternating
  projections.
\newblock {\em IEEE Transactions on Information Theory}, 64(5):3301--3312,
  2018.

\bibitem{waldspurger2015phase}
I.~Waldspurger, A.~d'Aspremont, and S.~Mallat.
\newblock Phase recovery, maxcut and complex semidefinite programming.
\newblock {\em Mathematical Programming}, 149(1):47--81, 2015.

\bibitem{walther1963question}
A.~Walther.
\newblock The question of phase retrieval in optics.
\newblock {\em Journal of Modern Optics}, 10(1):41--49, 1963.

\bibitem{wang2016solving}
G.~Wang and G.~Giannakis.
\newblock Solving random systems of quadratic equations via truncated
  generalized gradient flow.
\newblock In {\em Advances in Neural Information Processing Systems}, pages
  568--576, 2016.

\bibitem{wang2017scalable}
G.~Wang, G.~B. Giannakis, and J.~Chen.
\newblock Scalable solvers of random quadratic equations via stochastic
  truncated amplitude flow.
\newblock {\em IEEE Transactions on Signal Processing}, 65(8):1961--1974, 2017.

\bibitem{wang2017solving}
G.~Wang, G.~B. Giannakis, and Y.~C. Eldar.
\newblock Solving systems of random quadratic equations via truncated amplitude
  flow.
\newblock {\em IEEE Transactions on Information Theory}, 64(2):773--794, 2017.

\bibitem{wang2016sparse}
G.~Wang, L.~Zhang, G.~B. Giannakis, M.~Ak{\c{c}}akaya, and J.~Chen.
\newblock Sparse phase retrieval via truncated amplitude flow.
\newblock {\em IEEE Transactions on Signal Processing}, 66(2):479--491, 2018.

\bibitem{wang2014phase}
Y.~Wang and Z.~Xu.
\newblock Phase retrieval for sparse signals.
\newblock {\em Applied and Computational Harmonic Analysis}, 37(3):531--544,
  2014.

\bibitem{wei2015solving}
K.~Wei.
\newblock Solving systems of phaseless equations via kaczmarz methods: A proof
  of concept study.
\newblock {\em Inverse Problems}, 31(12):125008, 2015.

\bibitem{wu2021hadamard}
F.~Wu and P.~Rebeschini.
\newblock Hadamard wirtinger flow for sparse phase retrieval.
\newblock In {\em International Conference on Artificial Intelligence and
  Statistics}, pages 982--990. PMLR, 2021.

\bibitem{zhang2016reshaped}
H.~Zhang and Y.~Liang.
\newblock Reshaped wirtinger flow for solving quadratic system of equations.
\newblock In {\em Advances in Neural Information Processing Systems}, pages
  2622--2630, 2016.

\end{thebibliography}
\end{document}